\DeclareFontFamily{U}{mathb}{}
\DeclareFontShape{U}{mathb}{m}{n}{<-5.5> mathb5 <5.5-6.5> mathb6 
<6.5-7.5> mathb7 <7.5-8.5> mathb8 <8.5-9.5> mathb9 <9.5-11> mathb10 
<11-> mathb12}{}
\DeclareRobustCommand{\blackdiamond}{\mathbin{\text{\usefont{U}{mathb}{m}{n}\symbol{"0C}}}}
\def\NE{\mbox{\tt  N\hspace{-1pt}E}}
\def\dep{=\!\!}
\renewcommand{\PL}{\mathcal{PL}}
\newcommand{\QPL}{\mathcal{QPL}}
\newcommand{\Var}{\mathrm{Var}}
\newcommand{\Prop}{\mathrm{Prop}}
\newcommand{\FV}{\mathrm{FV}}
\newcommand{\Dim}{\mathrm{D}}
\newcommand{\Dom}{\mathrm{Dom}}
\newcommand{\Ran}{\mathrm{Ran}}
\newcommand{\Eval}{\mathrm{Eval}}
\newcommand{\Crit}{\mathrm{Crit}}
\newcommand{\Min}{\mathrm{Min}}
\newcommand{\Max}{\mathrm{Max}}
\newcommand{\Fdep}{\mathcal{F}_{\hspace{-0.3em}\raisebox{-0.2ex}{$\scriptstyle\rightarrow$}}}
\newcommand{\Fano}{\mathcal{F}_{\hspace{-0.05em}\raisebox{-0.2ex}{$\scriptstyle\Upsilon$}}}
\newcommand{\Finc}{\mathcal{F}_{\hspace{-0.05em}\raisebox{-0.1ex}{$\scriptstyle\subseteq$}}}
\newcommand{\Fexc}{\mathcal{F}_{\hspace{0.15em}\raisebox{-0.2ex}{$\scriptstyle \rule{0.5pt}{.5em}$\hspace{0.1em}}}}
\newcommand{\Find}{\mathcal{F}_{\hspace{-0.15em}\raisebox{-0.2ex}{$\scriptstyle \perp$}}}
\newtheorem{theorem}{Theorem}[section] 
\newtheorem{definition}[theorem]{Definition}
\newtheorem{proposition}[theorem]{Proposition}
\newtheorem{corollary}[theorem]{Corollary}
\newtheorem{lemma}[theorem]{Lemma}
\theoremstyle{definition}
\newtheorem{example}[theorem]{Example}
\newcommand{\glor}{\mathop{\raisebox{0.3ex}{\scalebox{0.75}{\rotatebox{4}{$\setminus$}}}\hspace{-2.5pt}\scalebox{1.2}{$\lor$}}}
\let\singlemight=\trianglepbdot
\let\might=\trianglepb
\title{Expressibility and inexpressibility in propositional team logics}
\author{Matilda Häggblom\\ University of Helsinki\and Minna Hirvonen\\ Leibniz University Hannover \and Jouko Väänänen\\ University of Helsinki}
\date{}
\begin{document}

\maketitle

\begin{abstract}
  We develop dimension theoretic methods for propositional team based logics.  Such quantitative methods were defined for team based first-order logic in (Hella et al., 2024) and were used to obtain strong hierarchy results in the first-order logic context. We show that in propositional logic and in several important cases, a team theoretical atom can be expressed in terms of atoms of lower arity. We estimate the `price' of such a reduction of arity, i.e. how much more complicated the new expression is.
   Our estimates involve as parameters the arity of the atoms involved, as well as the number of times the atom occurs in a formula. We also consider new variants of atoms and propositional operations, inspired by our work. We believe that our quantitative analysis leads to a deeper understanding of the scope and limits of propositional team based logic.
\end{abstract}

\section{Introduction}
Expressibility is one of the central concepts of logic. The purpose of this paper is to examine expressibility in the framework of propositional team semantics (\cite{MR3488885}). We adapt to the context of propositional logic the combinatorial method of dimension computation from \cite{Hella_Luosto_Vaananen_2024}, where it is used to prove hierarchy results in the richer context of first-order team logics. We use this method to estimate how much more can be expressed with a team semantical atom of a given arity in comparison to expressibility with atoms of lower arity. Naturally, this depends on the logical operations allowed. A special feature of our results, compared to  \cite{Hella_Luosto_Vaananen_2024}, is that we factor into the computations also the number of occurrences of particular kinds of atoms in a complex formula.

The main result of \cite{Hella_Luosto_Vaananen_2024} is, roughly speaking, that non-trivial team theoretical atoms, such as the dependence atom, the inclusion atom, the independence atom, etc, are not definable in terms of atoms of smaller arity, provided that only the usual logical operations $\wedge,\vee,\exists,\forall$ are allowed. On the other hand, if intuitionistic implication $\to$ is allowed, then the dependence atom can be expressed in terms of the constancy atom \cite{MR2480819}:
\begin{equation}\label{BI}
\dep(x_1,\ldots,x_n;y)\ \equiv\ (\dep(x_1)\wedge\ldots\land\dep(x_n))\to\ \dep(y).
\end{equation}
The explanation of this is that the logical operation $\to$ increases dimension exponentially. We observe in this paper that in the case of propositional logic, all atoms that we consider have ``reduction formulas'' i.e. equivalent forms in terms of atoms of lower arity, as in (\ref{BI}). In these reduction formulas, only the usual logical operations $\wedge,\vee$ are used, and sometimes some modifications of them, such as propositional quantifiers. We then use dimension computations to show that the reduction formulas, while lowering arity, increase necessarily---typically exponentially---the length of formulas and the number of such atoms occurring in a formula.

 For propositional inclusion and exclusion atoms, we do not obtain as simple reduction formulas as for dependence and anonymity atoms; one reason could be 
that inclusion and exclusion are not expressive enough once added to propositional logic. 
Both inclusion and anonymity atoms are union closed, but out of the two logics only propositional anonymity logic is expressively complete for all union closed team properties with the empty team property \cite{yang2022}. Similarly, both exclusion and dependence atoms are downward closed, but only propositional dependence logic is expressively complete for all downward closed team properties with the empty team property \cite{MR3488885}. We therefore consider settings where expressive completeness is realised for more expressive variants of propositional inclusion/exclusion logic. One approach is to introduce \emph{relativized} inclusion/exclusion atoms, and another is to consider quantified propositional inclusion/exclusion logic. In the context of these logics, we introduce simple reduction formulas for the atoms.

 In this paper, we also give attention to (quasi) upward closed formulas, which were mostly absent from \cite{Hella_Luosto_Vaananen_2024} due to their less expressive role in the first-order setting \cite{Galliani2015}. One example of a quasi upward closed atom is the primitive inclusion atom from \cite{yang2022} of the form $\bar{x}\subseteq\bar{p}$, where $\bar{x}$ is a sequence of constants $\top,\bot$ and $\bar{p}$ is a sequence of propositional symbols. The primitive inclusion atom plays an important role in the normal form for (extended) propositional inclusion logic \cite{yang2022}. One example of an upward closed unary operator is the epistemic might operator $\blackdiamond$, where $\blackdiamond\phi$ is satisfied by a team if there is a nonempty subteam that satisfies $\phi$. The epistemic might operator has been used in the context of convex logics in \cite{anttila2025convexteamlogics}, and for reasoning about legal concepts in \cite{Hornung}.

The structure of this paper is the following: 

 In \Cref{section 2}, we review the basic dimension theoretical concepts used in this paper and prove also some helpful auxiliary results about these concepts.

In \Cref{section 3}, we prove preservation results for different dimensions under logical operations. Respectively, we calculate the different dimensions of relevant team semantical atoms.

In \Cref{section 4}, we enter the topic of representations of atoms in terms of the same, or in some cases different atoms, of lower arity. When such a representation is iterated, the arities go down until they reach 1, or, as for dependence and anonymity atoms, 0. At the same time, the representing formula grows, potentially exponentially in comparison to what we started with. In the case of the inclusion and exclusion atoms, we obtain a representation of this kind only by allowing extended atoms.

\Cref{section 5} contains our main results Theorems \ref{opti}-\ref{opti inc exc}. These results give representations for dependence, anonymity, inclusion and exclusion atoms and show that the representations are optimal in the sense that while the number of occurrences of atoms of a lower arity grows, this growth cannot be avoided. We obtain in some cases exact bounds on how much the number of occurrences must grow, and in other cases estimates that leave room for improvement.

\Cref{section 6} contains further results about dimension with applications to modifications of the received atoms.

\section{Dimension definitions and properties}
\label{section 2}
The purely combinatorial concept of \emph{dimension} was introduced for team properties, or more generally for arbitrary families of sets, in \cite{Hella_Luosto_Vaananen_2024}. In fact, three different dimension concepts, upper dimension, dual upper dimension and cylindrical dimension, were introduced with similar but distinct properties.  The behaviour of dimensions under operators that map families of sets to new families of sets is studied in detail in \cite{Hella_Luosto_Vaananen_2024} and applied to prove arity-based hierarchy results for logics with team semantics in the context of first-order logic, including extended atoms.

The idea of our kind of dimension goes back to \cite{ciardelli09} where the dimension of a downward closed family is defined as the smallest number of
power-sets that cover the family.  A similar dimension concept in modal logic was introduced
 in \cite{HLSV} and further used in \cite{HS}. It was generalized in \cite{DBLP:journals/lmcs/LuckV19} 
from downward closed families to arbitrary families in the context of
propositional logic. Both \cite{HLSV} and  \cite{HS} apply the dimension concept to proving lower bound results in modal logic.

There are also other dimension concepts in discrete
mathematics (e.g. matroid rank and  Vapnik–Chervonenkis-dimension). These otherwise widely used dimension-concepts do not seem to be appropriate for the purpose of this paper.  Still
another dimension is the length of a disjunctive normal form in
propositional logic. It is shown in \cite{Hella_Luosto_Vaananen_2024} that this is
equivalent to the cylindrical dimension in our sense.

In \Cref{subsec prel}, we recall properties of families of sets relevant to team semantics. In particular, the definitions of the upper dimension and dual upper dimension from \cite{Hella_Luosto_Vaananen_2024} are introduced. 

In \Cref{subsec convex}, we observe that the dimensions of convex families correspond to a natural notion of the size of their smallest cover. This result for downward closed families is readily used in \cite{Hella_Luosto_Vaananen_2024}; we state it in a more general form for convex families. We extend this connection further to \emph{quasi} upward and downward closed families. In particular, the quasi upward closed families play an important role in multiple logics with team semantics, discussed in \Cref{subsec: trs quasi}.

\subsection{Preliminaries about dimension} \label{subsec prel}

We define some basic closure properties for families of sets, and recall the definitions from \cite{Hella_Luosto_Vaananen_2024} regarding upper and dual upper dimensions.

Let $\mathcal{X}$ be a base set with at least two elements and let $\mathcal{A}\subseteq P(\mathcal{X})$ be a family of sets. 
   Define the interval
$[A,B]:=\{C\mid A\subseteq C\subseteq B\}$.

\begin{itemize}
     \item $\mathcal{A}$ is convex if $[A,B]\subseteq\mathcal{A}$ for all $A,B\in\mathcal{A}$.
     
     \item $\mathcal{A}$ is downward closed if for all $A\in\mathcal{A}$ and $B\subseteq A$, $B\in\mathcal{A}$. Equivalently, $\mathcal{A}$ is convex and contains the empty set.
     
    \item $\mathcal{A}$ is \emph{quasi} downward closed if $\mathcal{X}\in\mathcal{A}$ and $\mathcal{A}\setminus\{\mathcal{X}\}$ is downward closed.

     \item $\mathcal{A}$ is upward closed if for all $A\in\mathcal{A}$ with $A\subseteq B\subseteq\mathcal{X}$, $B\in\mathcal{A}$. Equivalently, $\mathcal{A}$ is convex and contains $\mathcal{X}$.
     
     \item $\mathcal{A}$ is \emph{quasi} upward closed if $\emptyset\in\mathcal{A}$ and $\mathcal{A}\setminus\{\emptyset\}$ is upward closed. 
     
      \item $\mathcal{A}$ is union closed if
for every subfamily $\mathcal{B}\subseteq\mathcal{A}$, we have $\bigcup\mathcal{B}\in\mathcal{A}$.
\end{itemize}

Note that a family $\mathcal{A}$ is both quasi (upward) downward closed and (upward) downward closed only when $\mathcal{A}=P(\mathcal{X})$. 
To avoid any ambiguity in the results of this paper, we only say that a family $\mathcal{A}$ is quasi downward (upward) closed if it is not downward (upward) closed, i.e., $\mathcal{A}\subsetneq P(\mathcal{X})$ and $\mathcal{A}$ is quasi (upward) downward closed.



The central concept of this paper is the upper and dual upper dimensions of families of sets, corresponding to the smallest \emph{dominating} and \emph{supporting} subfamily, respectively.  The dimensions capture a different kind of property of a family of sets than the closure properties discussed thus far.

\begin{definition}[\cite{Hella_Luosto_Vaananen_2024}]
Let $\mathcal{A}$ be a family of sets.
\begin{description}[align=left]
    \item[\hspace*{.2cm} \normalfont (Dominating subfamily)] 
       A subfamily $\mathcal{G}\subseteq\mathcal{A}$ dominates $\mathcal{A}$ if there exists 
    convex families $\mathcal{D}_{G},G\in\mathcal{G}$, such that $\bigcup_{G\in\mathcal{G}}\mathcal{D}_G=\mathcal{A}$ and $\bigcup\mathcal{D}_G=G\in\mathcal{D}_{G}$ for each $G\in\mathcal{G}$.

  \item[\hspace*{.2cm} \normalfont (Upper dimension)]   The upper dimension of a family $\mathcal{A}$ is $\text{D}(\mathcal{A}):=\min\{|\mathcal{G}| \mid \mathcal{G} \text{ dominates } \mathcal{A}\}$

    \item[\hspace*{.2cm} \normalfont (Supporting subfamily)]  Let $\mathcal{A}$ be a family of sets.  A subfamily $\mathcal{G}\subseteq\mathcal{A}$ supports $\mathcal{A}$ if there exists 
    convex families $\mathcal{S}_{G},G\in\mathcal{G}$, such that $\bigcup_{G\in\mathcal{G}}\mathcal{S}_G=\mathcal{A}$ and $\bigcap\mathcal{S}_G=G\in\mathcal{S}_{G}$ for each $G\in\mathcal{G}$.
    \item[\hspace*{.2cm} \normalfont (Dual upper dimension)]
        The dual upper dimension of a family $\mathcal{A}$ is $\text{D}^d(\mathcal{A}):=min\{|\mathcal{G}| \mid \mathcal{G} \text{ supports } \mathcal{A}\}$.
\end{description}
\end{definition}

A dominating subfamily is thus a collection of sets $\mathcal{G}$ from the family $\mathcal{A}$, such that for every $B\in\mathcal{A}$, there is some $G\in\mathcal{G}$ for which $[B,G]\subseteq \mathcal{A}$. Similarly, a supporting subfamily is a collection $\mathcal{G}\subseteq\mathcal{A}$, such that for every $B\in\mathcal{A}$, there is some $G\in\mathcal{G}$ for which $[G,B]\subseteq \mathcal{A}$. With this in mind, some auxiliary definitions are useful when calculating the dimensions.

\begin{definition}[\cite{Hella_Luosto_Vaananen_2024}]
    Let $\mathcal{A}$ be a family of sets and $A\in\mathcal{A}$.
    
    \begin{description}[align=left]
    \item[\hspace*{.2cm} \normalfont (Convex shadow)]     The convex shadow of $A$ in $\mathcal{A}$ is the family $\partial_A(\mathcal{A})=\{B\subseteq A\mid [B,A]\subseteq\mathcal{A}\}$.

\item[\hspace*{.2cm} \normalfont (Critical set)]  A set $A\in\mathcal{A}$ is called critical in $\mathcal{A}$ if its convex shadow is maximal in the family $\{\partial_B(\mathcal{A})\mid B\in\mathcal{A}\}$. Define the notation $\Crit(\mathcal{A})=\{A\in\mathcal{A}\mid A \text{ critical in }\mathcal{A}\}$. 

 \item[\hspace*{.2cm} \normalfont (Dual convex shadow)] The dual convex shadow of $A$ in $\mathcal{A}$ is the family $\partial^d_A(\mathcal{A})=\{B\supseteq A\mid [A,B]\subseteq\mathcal{A}\}$.
 
    \item[\hspace*{.2cm} \normalfont (Dual critical set)] A set $A\in\mathcal{A}$ is called \emph{dual critical} in $\mathcal{A}$ if its dual convex shadow is maximal in the family $\{\partial^d_B(\mathcal{A})\mid B\in\mathcal{A}\}$. Define the notation $\Crit^d(\mathcal{A})=\{A\in\mathcal{A}\mid A \text{ is dual critical in }\mathcal{A}\}$. 
 \end{description}
\end{definition}

The close relationship between the (dual) upper dimension and the (dual) shadows and (dual) critical sets is established next. In particular, \Cref{Zorn lemma} \cref{Zorn item def} provides an alternative definition for the (dual) dimensions for finite families.

\begin{lemma} \label[lemma]{Zorn lemma} 
Let $\mathcal{A}$ be a finite family.
\begin{enumerate}[label=(\roman*)]
    \item The upper dimension $\Dim(\mathcal{A})$ is the smallest size of a subfamily $\mathcal{G}\subseteq \mathcal{A}$ whose members' convex shadows cover $\mathcal{A}$, that is, $\{\partial_B(\mathcal{A})\mid B\in\mathcal{G}\}=\mathcal{A}$. Similarly, the dual upper dimension $\Dim^d(\mathcal{A})$ is the smallest size of a subfamily $\mathcal{H}\subseteq \mathcal{A}$ such that $\{\partial^d_B(\mathcal{A})\mid B\in\mathcal{H}\}=\mathcal{A}$.
    
    \label{Zorn item def}
    \item (\cite{Hella_Luosto_Vaananen_2024}) There is a a family $\mathcal{G}$ dominating $\mathcal{A}$ such $\mathcal{G}\subseteq\Crit(\mathcal{A})$ and $|\mathcal{G}|=\Dim(\mathcal{A})$. Similarly, there is a a family $\mathcal{H}$ supporting $\mathcal{A}$ such $\mathcal{H}\subseteq\Crit^d(\mathcal{A})$ and $|\mathcal{H}|=\Dim^d(\mathcal{A})$.  \label{Zorn item}
\end{enumerate} 
\end{lemma}
\begin{proof}
The definition of $\Dim(\mathcal{A})$ is the smallest number of convex sets whose union covers the family $\mathcal{A}$. For finite families, we can w.l.o.g. assume that each such convex set is maximal, i.e., that it is a convex shadow, and in particular, a convex shadow of a critical set. The dual claims in both items are analogous.
\end{proof}

We remark that the preceding lemma does not apply to some infinite families for which critical sets are missing; consider any infinite base set $\mathcal{X}$ and the infinite family $\mathcal{A}=\{A\in \mathcal{X
}\mid |A| \text{ is finite}\}$, clearly $\Crit(\mathcal{A})$ is empty. See \cite{Hella_Luosto_Vaananen_2024} for a more detailed handling of dimensions for infinite families. In this paper, we consider only finite families, and henceforth assume that all families of sets are finite. 

\Cref{tabel:intuition} provides an intuition of the upper and dual upper dimensions, as well as the cylindrical dimension $\Dim^c$ from \cite{Hella_Luosto_Vaananen_2024}. We say that a nonempty interval $[A,B]\subseteq\mathcal{A}$ is maximal in $\mathcal{A}$ if for all $A'\subseteq A$ and $B'\supseteq B$, $[A',B']\subseteq\mathcal{A}$ implies that  $A'=A$ and $B'=B$. Now for a finite family $\mathcal{A}$, the cylindrical dimension $\Dim^c(\mathcal{A})$ is the smallest number of (maximal) intervals that cover the family. We will not focus on the cylindrical dimension further in this paper, but let us briefly recall some basic results from \cite{Hella_Luosto_Vaananen_2024}: $\Dim(\mathcal{A})\leq \Dim^c(\mathcal{A})$,  $\Dim^d(\mathcal{A})\leq \Dim^c(\mathcal{A})$ and if $\mathcal{A}$ is convex, then
$\Dim^c(\mathcal{A})\leq \Dim(\mathcal{A})\Dim^d(\mathcal{A})$.


\begin{figure}[ht] 
    \centering

\noindent

\colorbox{gray!20}{%
\begin{minipage}{0.33\textwidth}

\begin{tikzpicture}
\foreach \row in {1,2,3} {
    \foreach \col in {1,2,3,4} {
      \node[inner sep=0pt, minimum size=0pt] (n\col\row) at (\col, -\row) {};
    }
  }

\node[]  at (.9, -.85) {$\Dim(\mathcal{A})=2$};

\node[draw, circle, thick, minimum size=6pt, inner sep=0pt] at (2,-.85) {};
\node[draw, circle, thick, minimum size=6pt, inner sep=0pt] at (3,-.85) {};
\draw[<->, thick, looseness=1, out=270, in=90] (n21) to (n13);
\draw[<->, thick, looseness=1, out=270, in=90] (n21) to (n33);
\draw[<->, thick, looseness=1, out=270, in=90] (n21) to (n23);
\draw[<->, thick, looseness=1, out=279, in=90] (n31) to (n43);
 \draw[<->, thick, looseness=1, out=270, in=90] (n31) to (n23);
\end{tikzpicture}

\end{minipage}}
\hfill
\begin{minipage}{0.30\textwidth}
\begin{tikzpicture}

  \foreach \row in {1,2,3} {
    \foreach \col in {1,2,3,4} {
      \node[inner sep=0pt, minimum size=0pt] (n\col\row) at (\col, -\row) {};
    }
  }

\node[]  at (.8, -2.8) {$\Dim^d(\mathcal{B})=2$};

\node[draw, circle,fill=black,  thick, minimum size=6pt, inner sep=0pt] at (2,-3.15) {};
\node[draw, circle, fill=black, thick, minimum size=6pt, inner sep=0pt] at (3,-3.15) {};
    \draw[<->, thick, looseness=1, out=90, in=270] (n23) to (n11);
\draw[<->, thick, looseness=1, out=90, in=270] (n23) to (n31);
\draw[<->, thick, looseness=1, out=90, in=270] (n23) to (n21);
\draw[<->, thick, looseness=1, out=90, in=270] (n33) to (n41);
 \draw[<->, thick, looseness=1, out=90, in=270] (n33) to (n21);
\end{tikzpicture}
\end{minipage}
\hfill
\colorbox{gray!20}{%
\begin{minipage}{0.27\textwidth}
\begin{tikzpicture}

  \foreach \row in {1,2,3} {
    \foreach \col in {1,2,3,4} {
      \node[inner sep=0pt, minimum size=0pt] (n\col\row) at (\col, -\row) {};
    }
  }

\node[]   at (.9, -.85)  {$\Dim^c(\mathcal{C})=3$};
 \node[] (A) at (2.9, -.9) {};
   \node[] (B) at (2.1, -3.1) {};
\node[] (C) at (3, -2.3) {};
\node[] (D) at (1.1, -1.4) {};
  \draw[<->, thick] (D) to (C);
\draw[<->, thick] (n21) to (n23);
\draw[<->, thick] (A) to (B);

\end{tikzpicture}\end{minipage}}
   \caption{
   The double-edged arrows represent maximal intervals, the white circles critical sets, and the filled-in circles dual critical sets. Each dimension is concerned with the smallest number of some particular convex sets that cover a family; the upper dimension $\Dim$ with critical sets, the dual upper dimension $\Dim^d$ with dual critical sets, and the cylindrical dimension $\Dim^c$ with maximal intervals.}
    \label{tabel:intuition}
\end{figure}
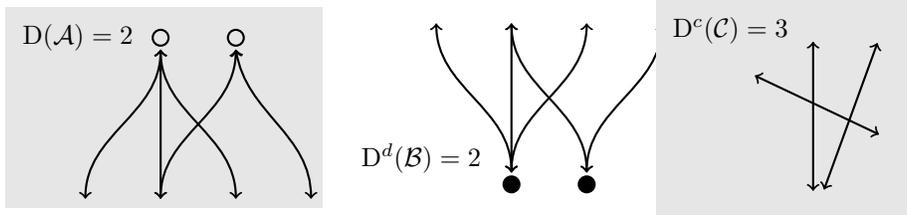

We provide concrete examples of downward, union and quasi upward closed families and their upper and dual upper dimensions in \Cref{tab:four ex}, together with an example showing that there are families for which the number of (dual) critical sets is strictly greater than the (dual) upper dimension, meaning that the subset relation in \Cref{Zorn lemma} \cref{Zorn item} can be strict.

\begin{example} \label[example]{example1}
    Consider the family $$\mathcal{D}=\{\{c\}, \{a,c\},  \{c,d\}, \{a,b,c\}\}.$$ Now $\Crit\{\mathcal{D}\}=\{\{a,c\}, \{c,d\},\{a,b,c\}\}$, 
since the convex shadows $\partial_{\{a,b,c\}}(\mathcal{D})=\{ \{a,c\},\{a,b,c\}\}$, $\partial_{\{a,c\}}(\mathcal{D})=\{\{c\},\{a,c\} \}$ and $\partial_{\{c,d\}}(\mathcal{D})=\{\{c\},\{c,d\}\}$ are pairwise incomparable. 
We note that $\partial_{\{a,c\}}(\mathcal{D})\subseteq\partial_{\{a,b,c\}}(\mathcal{D})\cup\partial_{\{c,d\}}(\mathcal{D})=\mathcal{D}$, hence $2=\Dim(\mathcal{D})\lneq |\Crit\{\mathcal{D}\}|=3$. The family is illustrated in \Cref{tab:four ex}.
\end{example}  

\begin{figure}[ht]
\begin{center}
\begin{tabular}{cc}
\cellcolor{gray!10}
\begin{minipage}{0.45\textwidth}
\centering

\begin{tikzpicture}

  \foreach \row in {1,2,3} {
    \foreach \col in {1,2,3} {
      \node[inner sep=0pt, minimum size=0pt] (n\col\row) at (\col, -\row) {};
    }
  }

  \node[]  at (0, -.8) {$\mathcal{A}:$};
\node[]  at (1, -0.8) {$\{a,b\}$};
\node[] at (3, -0.8) {$\{b,c\}$};
\node[] at (.85, -1.65) {$\{a\}$};
\node[] at (3.15, -1.65) {$\{c\}$};
\node[] at (2, -2.1) {$\{b\}$};
\node[] at (2, -3.2) {$\emptyset$};

\draw[<->, thick, looseness=1, out=270, in=90] (n11) to (n23);
\draw[<->, thick, looseness=1, out=270, in=90] (n31) to (n23);
\end{tikzpicture}
\end{minipage}
&
\begin{minipage}{0.45\textwidth}
\centering
\begin{tikzpicture}

  \foreach \row in {1,2,3} {
    \foreach \col in {1,2,3} {
      \node[inner sep=5pt, minimum size=10pt] (n\col\row) at (\col, -\row) {};
    }
  }

\node[]  at (-.3, -.75) {$\mathcal{B}:$};
\node[]  at (1, -.9) {$\{a,b,c,d\}$};
\node[]  at (1, -2) {$\{a,b,c\}$};
\node[]  at (1, -3.1) {$\{a,b\}$};
\node[] at (2, -2) {$\{c\}$};
\node[] at (2, -3.1) {$\emptyset$};

     \draw[<->, thick] (n11) --  (n12);
      \draw[<->, thick] (n12) --  (n13);
     \draw[<->, thick] (n22) --  (n23);

\end{tikzpicture}

\end{minipage}
\\[3em]
\begin{minipage}{0.45\textwidth}
\centering
\begin{tikzpicture}

  \foreach \row in {1,2,3} {
    \foreach \col in {1,2,3} {
      \node[inner sep=0pt, minimum size=0pt] (n\col\row) at (\col, -\row) {};
    }
  }

\node[]  at (-.5, -.75) {$\quad\quad\mathcal{C}:$};
\node[]  at (2, -0.8) {$\{a,b,c,d\}$};
\node[]  at (2, -2.3) {$\{a,b,c\}$};
\node[]  at (1, -1.7) {$\{a,b,d\}$};
\node[] at (3, -1.7) {$\{b,c,d\}$};
\node[] at (1, -3.2) {$\{a,b\}$};
\node[] at (3, -3.2) {$\{b,c\}$};
 \fill (4, -2.95) circle (1pt);
\node[] at (4, -3.2) {$\emptyset$};

    \draw[<->, thick, looseness=1, out=270, in=90] (n21) to (n13);
\draw[<->, thick, looseness=1, out=270, in=90] (n21) to (n33);

  \node[] (A) at (4, -2.95) {};
 \draw[<->, thick, looseness=6, out=115, in=15] (A) to (A);

\end{tikzpicture}
\end{minipage}
&
\cellcolor{gray!10}
\begin{minipage}{0.45\textwidth}
\centering
\begin{tikzpicture}

  \foreach \row in {1,2,3} {
    \foreach \col in {1,2,3} {
      \node[inner sep=5pt, minimum size=10pt] (n\col\row) at (\col, -\row) {};
    }
  }

\node[]  at (-.3, -.75) {$\mathcal{D}:$};
\node[]  at (1, -.9) {$\{a,b,c\}$};
\node[]  at (1, -2) {$\{a,c\}$};
\node[]  at (1, -3.1) {$\{c\}$};
\node[] at (2, -2) {$\{c,d\}$};

     \draw[<->, thick] (n11) --  (n12);
      \draw[<->, thick] (n12) --  (n13);
     \draw[<->, thick] (n22) --  (n13);

\end{tikzpicture}
\end{minipage}
\end{tabular}
\end{center}

    \caption{Let $\mathcal{X}=\{a,b,c,d\}$ be a base set. The four families are illustrated with their maximal intervals shown by a double-edged arrow.  The family $\mathcal{A}:=\{\emptyset,\{a\},\{b\}, \{c\}, \{a,b\}, \{b,c\}\}$ is downward closed with 
$\Dim(\mathcal{A})=2$ and $\Dim^d({A})=1$. The family 
$\mathcal{B}:=\{\emptyset,\{c\}, \{a,b\},\{a,b,c\}, \{a,b,c,d\}\}$ is union closed with 
$\Dim(\mathcal{B})=3=\Dim^d(\mathcal{B})$. The family $\mathcal{C}:=\{\emptyset,\{a,b\},\{b,c\},\{a,b,c\}, \{a,b,d\},\{b,c,d\}, \{a,b,c,d\}\}$ is quasi upward closed with
$\Dim(\mathcal{C})=2$ and $\Dim^d(\mathcal{C})=3$. Lastly, the family $\mathcal{D}$ is from \Cref{example1}. What is its dual upper dimension $\Dim^d(\mathcal{D})$?}
    \label{tab:four ex}
\end{figure}
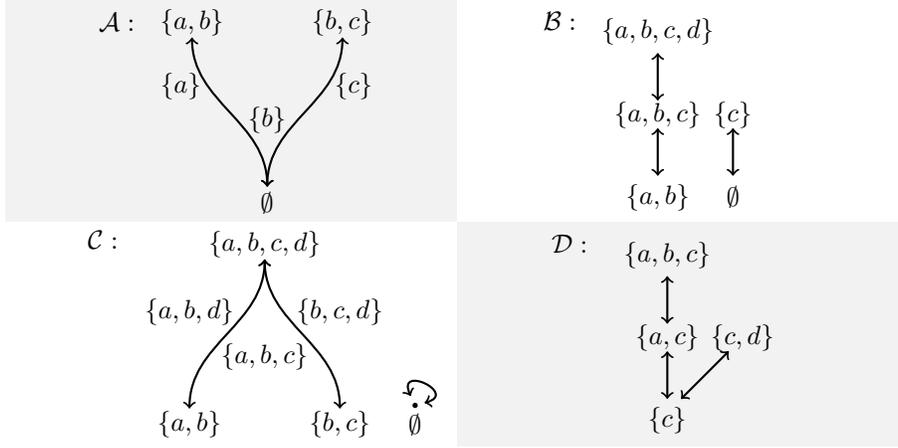

We highlight that the choice of dimension can trivialize the dimension calculations for certain classes of families. The following proposition shows that if a family $\mathcal{A}$ is either downward or quasi downward closed, then we can determine which one it is by its dual upper dimension, with an analogous argument for (quasi) upward closed families using the upper dimension. 

\begin{proposition}\label[proposition]{trivial dim} Let $\mathcal{A}\subseteq P(\mathcal{X})$ be a nonempty 
family. 
\begin{enumerate} [label=(\roman*)]
    \item  If $\mathcal{A}$ is downward closed, then the dual upper dimension is $\Dim^d(\mathcal{A})=1$. If $\mathcal{A}$ is upward closed, then the upper dimension is $\Dim(\mathcal{A})=1$.\label{trivial dim strong}
    \item Assume additionally that $\mathcal{A}\subsetneq P(\mathcal{X})$ and $|\mathcal{A}|\geq 2$. If $\mathcal{A}$ is quasi downward closed, then the dual upper dimension is $\Dim^d(\mathcal{A})= 2$. 
 If $\mathcal{A}$ is quasi upward closed, then the upper dimension is $\Dim(\mathcal{A})=2$.\label{trivial dim quasi}
\end{enumerate}
\end{proposition}

\begin{proof}
We obtain the results using \Cref{Zorn lemma} \cref{Zorn item}.
\begin{enumerate}[label=(\roman*)]
    \item Suppose that $\mathcal{A}$ is downward closed, then $\Crit^d(\mathcal{A})=\{\emptyset\}$ since $\partial^d_\emptyset(\mathcal{A})=\{A\in\mathcal{A}\mid [\emptyset, A]\subseteq \mathcal{A}\}=\mathcal{A}$, and we conclude $\Dim(\mathcal{A})=1$. Similarly, if $\mathcal{A}$ is upward closed, $\Crit(\mathcal{A})=\{\mathcal{X}\}$, hence $\Dim(\mathcal{A})=1$.

    \item If $\mathcal{A}\subsetneq P(\mathcal{X})$ is quasi downward closed, then $\partial_\emptyset(\mathcal{A})=\{A\in\mathcal{A}\mid [\emptyset, A]\subseteq \mathcal{A}\}=\mathcal{A}\setminus\{\mathcal{X}\}$ and $\{\mathcal{X}\}\subseteq\partial_\mathcal{X}(\mathcal{A})$. Neither dual convex shadow is contained in the other, hence $\Crit^d(\mathcal{A})=\{\emptyset, \mathcal{X}\}$ and we conclude $\Dim^d(\mathcal{A})=2$.

    The case when $\mathcal{A}$ is quasi upward closed is similar since $\Crit(\mathcal{A})=\{\emptyset, \mathcal{X}\}$ due to $\partial_\mathcal{X}(\mathcal{A})=\{A\in\mathcal{A}\mid [A, \mathcal{X}]\subseteq \mathcal{A}\}=\mathcal{A}\setminus\{\emptyset\}$ by upward closure and $\{\emptyset\}\subseteq\partial_\emptyset(\mathcal{A})$, with neither convex shadow contained in the other. Hence $\Dim(\mathcal{A})=2$.
\end{enumerate}
\end{proof}

In light of the preceding proposition, we mainly focus on the upper dimension for (quasi) downward closed families, and the dual upper dimension for (quasi) upward closed families. 

\subsection{Dimensions for convex and quasi convex families} \label{subsec convex}

By definition, the upper dimension for (quasi) downward closed families has symmetrical results to the dual upper dimension for (quasi) upward closed families. We exploit this symmetry and obtain a natural correspondence between the families' dimensions and number of maximal/minimal sets. 


 We first consider downward and upward closed families, recalling that both are convex. 
We establish a relationship between the sets $\Max(\mathcal{A})$ and $\Min(\mathcal{A})$ with dominating and supporting subfamilies of $\mathcal{A}$, where 
\begin{itemize}
     \item $\Min(\mathcal{A}):=\{A\in\mathcal{A}\mid \text{ there is no } B\in\mathcal{A}\text{ such that } B\subsetneq A\},$

     \item $\Max(\mathcal{A}):=\{A\in\mathcal{A}\mid \text{there is no } B\in\mathcal{A}\text{ such that } A\subsetneq B\}$.
 \end{itemize}
 We then refine the result by making a connection to the (dual) critical sets. Finally, we conclude that the upper dimension and dual upper dimension of convex families can be calculated using the size of $\Max(\mathcal{A})$ and $\Min(\mathcal{A})$, respectively.

\begin{lemma} \label[lemma]{minmax and crit dim lemma} 
    Let $\mathcal{A}$ be a family of sets.

    \begin{enumerate}   [label=(\roman*)]
    \item \label{MinMax item1}
    If $\mathcal{G}$ dominates $\mathcal{A}$, then $\Max(\mathcal{A})\subseteq\mathcal{G}$, and if $\mathcal{H}$ supports $\mathcal{A}$, then $\Min(\mathcal{A})\subseteq\mathcal{H}$. 
    \item $\Max(\mathcal{A})\subseteq\Crit(\mathcal{A})$ and $\Min(\mathcal{A})\subseteq\Crit^d(\mathcal{A})$ for any family $\mathcal{A}$.
    \item   If $\mathcal{A}$ is a convex family, then $\Crit(\mathcal{A})\subseteq\Max(\mathcal{A})$ and $\Crit^d (\mathcal{A})\subseteq\Min(\mathcal{A})$.
    \item      If $\mathcal{A}$ is a 
    convex family, then $\Dim(\mathcal{A})=|\Max(\mathcal{A})|$ and $\Dim^d(\mathcal{A})=|\Min(\mathcal{A})|$.
\end{enumerate}

\end{lemma}
\begin{proof}
We prove the first claim of each item. The second is symmetrical.
\begin{enumerate}[label=(\roman*)]
\item Let $\mathcal{G}\subseteq\mathcal{A}$ be such that it dominates $\mathcal{A}$. 
Since $\Max(\mathcal{A})\subseteq\mathcal{A}=\bigcup_{G\in\mathcal{G}}\mathcal{D}_{G}$ for some convex families $\mathcal{D}_{G},G\in\mathcal{G}$, we have that for every $A\in\Max(\mathcal{A})$ there is some $G\in\mathcal{G}$ such that $A\in\mathcal{D}_{G}$. 
Then $A\subseteq\bigcup\mathcal{D}_{G}=G$, and since $A$ is maximal in $\mathcal{A}$, we have $A=G$. Thus $A\in\mathcal{G}$, and $\Max(\mathcal{A})\subseteq\mathcal{G}$.  
    \item     Let $G\in\Max(\mathcal{A})$ and $G'\in\mathcal{A}$ be such that $\partial_G(\mathcal{A})\subseteq\partial_{G'}(\mathcal{A})$. Then $G\in\partial_G(\mathcal{A})\subseteq\partial_{G'}(\mathcal{A})$, so $G\subseteq G'$. Since $G$ is maximal, $G=G'$ and $\partial_G(\mathcal{A})=\partial_{G'}(\mathcal{A})$, implying that $G\in\Crit(\mathcal{A})$.

    \item  
    Let $G\in\Crit(\mathcal{A})$ and $G'\in\mathcal{A}$ be such that $G\subseteq G'$. Suppose that $A\in\partial_G(\mathcal{A})$. Then by the definition of convex shadow, $[A,G]\subseteq\mathcal{A}$. 
By the convexity of $\mathcal{A}$, $[G,G']\subseteq\mathcal{A}$, so it follows that $[A,G']\subseteq\mathcal{A}$. Hence, $A\in\partial_{G'}(\mathcal{A})$ and $\partial_G(\mathcal{A})\subseteq\partial_{G'}(\mathcal{A})$. Since $G$ is critical, it must be that $G=G'$, and therefore $G$ is maximal in $\mathcal{A}$, i.e.,  $G\in\Max(\mathcal{A})$.

        \item 
   By \Cref{Zorn lemma} \cref{Zorn item}, there is a family $\mathcal{G}$  dominating $\mathcal{A}$ such that $\mathcal{G}\subseteq\Crit(\mathcal{A})$ and $|\mathcal{G}|=\Dim(\mathcal{A})$.
    By \Cref{minmax and crit dim lemma} \cref{MinMax item1}, we have $\Max(\mathcal{A})\subseteq\mathcal{G}$, so $\Max(\mathcal{A})\subseteq\mathcal{G}\subseteq\Crit(\mathcal{A})$. Since $\mathcal{A}$ is convex, by the previous item, we have $\Crit(\mathcal{A})\subseteq\Max(\mathcal{A})$, from which the claim follows.

\end{enumerate}
\end{proof}

The next proposition is now immediate. 

\begin{proposition} \label[proposition]{dc and uc prop}
    If $\mathcal{A}$ is a downward closed family, then $\Dim(\mathcal{A})=|\Max(\mathcal{A})|$; and if $\mathcal{A}$ is an upward closed family, then $\Dim^d(\mathcal{A})=|\Min(\mathcal{A})|$.
\end{proposition}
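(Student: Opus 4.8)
The plan is to derive \Cref{dc and uc prop} as a direct specialization of \Cref{minmax and crit dim lemma} \cref{MinMax item1}..(iv), since that lemma establishes the identities $\Dim(\mathcal{A})=|\Max(\mathcal{A})|$ and $\Dim^d(\mathcal{A})=|\Min(\mathcal{A})|$ for every convex family $\mathcal{A}$. First I would invoke the observation, recorded in \Cref{subsec prel}, that both downward closed and upward closed families are convex (indeed, a downward closed family is exactly a convex family containing $\emptyset$, and an upward closed family is exactly a convex family containing $\mathcal{X}$). Thus a downward closed $\mathcal{A}$ satisfies the hypothesis of \Cref{minmax and crit dim lemma} \cref{MinMax item1}(iv), which immediately gives $\Dim(\mathcal{A})=|\Max(\mathcal{A})|$; symmetrically, an upward closed $\mathcal{A}$ is convex, so the same item yields $\Dim^d(\mathcal{A})=|\Min(\mathcal{A})|$.

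Since the proposition is stated as being \emph{immediate}, I would keep the argument to a single short paragraph: note convexity of downward/upward closed families, then apply the convex case of the preceding lemma to each of the two claims in turn. No genuinely new work is required, which is why the statement selects only the two ``natural'' combinations (upper dimension with downward closed, dual upper dimension with upward closed) rather than all four — these are the combinations for which the maximal/minimal set count is the nontrivial one. (For instance, \Cref{trivial dim} \cref{trivial dim strong} already records that a downward closed family has $\Dim^d(\mathcal{A})=1$, matching $|\Min(\mathcal{A})|=|\{\emptyset\}|=1$, and dually for upward closed families.)

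There is essentially no obstacle here: the only thing to be careful about is confirming that the convexity characterizations stated in the preliminaries are exactly what \Cref{minmax and crit dim lemma} \cref{MinMax item1}(iv) needs, and that one applies the upper-dimension half of that item to the downward closed case and the dual-upper-dimension half to the upward closed case. If I wanted to make the proof fully self-contained rather than citing item (iv), I could instead reprove it in this setting by observing that for a downward closed family every $A\in\Max(\mathcal{A})$ is critical and conversely, so that the dominating subfamily guaranteed by \Cref{Zorn lemma} \cref{Zorn item} is forced to be exactly $\Max(\mathcal{A})$; but given that the general convex result is already in hand, the one-line specialization is the cleaner route.
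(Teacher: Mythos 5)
Your proposal is correct and matches the paper's own route exactly: the paper derives the proposition as an immediate consequence of \Cref{minmax and crit dim lemma}(iv) together with the fact, recorded in the preliminaries, that downward (resp.\ upward) closed families are precisely the convex families containing $\emptyset$ (resp.\ $\mathcal{X}$). No gap.
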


We now consider the upper dimension for quasi downward closed families with symmetrical results to the dual upper dimension for quasi upward closed families. Both families are \emph{quasi convex}, i.e., convex modulo the empty set and the base set. Furthermore, recall that we assume that quasi downward (upward) closed families are not downward (upward) closed. Therefore, there is no overlap with the previous results in this section. 



We define quasi variants of the sets $\Min(\mathcal{A})$ and $\Max(\mathcal{A})$.

\begin{itemize}
    \item $\Min^q(\mathcal{A}):=\Min(\mathcal{A}\setminus\{\emptyset\})\cup\{\emptyset\}$,
    
     \item $\Max^q(\mathcal{A}):=\Max(\mathcal{A}\setminus\{\mathcal{X}\})\cup\{\mathcal{X}\}$.
 \end{itemize}

 We note the difference between $\Min$ and $\Min^q$. 
Consider the base set $\mathcal{X}=\{a,b,c\}$. Then the family $\mathcal{A}\subseteq P(\mathcal{X})$,  $$\mathcal{A}=\{\emptyset,\{a\},\{a,b\},\{a,c\},\{a,b,c\}\}$$ is quasi upward closed with $\Min(\mathcal{A})=\{\emptyset\}$ and $\Min^q(\mathcal{A})=\{\emptyset, \{a\}\}$. Similar examples can be constructed for quasi upward closed families comparing $\Max$ and $\Max^q$.   

We now obtain a close correspondence between $\Max^q$ and dual critical sets of quasi downward closed families, and similarly, between $\Min^q$ and dual critical sets of quasi upward closed families.


\label{subsec quasi prop}

\begin{lemma} \label[lemma]{W minmax and crit dim lemma} 
Let $\mathcal{A}\subsetneq P(\mathcal{X})$ be a 
quasi downward closed family, and let $\mathcal{B}\subsetneq P(\mathcal{X})$ be a 
quasi upward closed family. Then $\Max^q(\mathcal{A})=\Crit(\mathcal{A})$ and $\Min^q(\mathcal{B})=\Crit^d(\mathcal{B})$.
\end{lemma}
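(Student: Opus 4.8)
The plan is to reduce the whole lemma to a direct computation of convex shadows and then exploit the symmetry that has organized this entire section: I will prove $\Max^q(\mathcal{A})=\Crit(\mathcal{A})$ for the quasi downward closed family $\mathcal{A}$, and the companion identity $\Min^q(\mathcal{B})=\Crit^d(\mathcal{B})$ will follow by the dual argument, replacing $\partial$ by $\partial^d$, the role of $\emptyset$ by $\mathcal{X}$, and subsets by supersets throughout. Writing $\mathcal{A}':=\mathcal{A}\setminus\{\mathcal{X}\}$, the hypotheses give that $\mathcal{A}'$ is downward closed, that $\mathcal{X}\notin\mathcal{A}'$, and—crucially, since we assume $\mathcal{A}\subsetneq P(\mathcal{X})$—that $\mathcal{A}'\neq P(\mathcal{X})$.

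The computational core is to determine $\partial_A(\mathcal{A})$ for each $A\in\mathcal{A}$. For $A\in\mathcal{A}'$ I would observe that any interval $[B,A]$ with $B\subseteq A$ lies entirely below $A\subsetneq\mathcal{X}$, hence inside the downward closed $\mathcal{A}'$; consequently $\partial_A(\mathcal{A})=P(A)$ is the full power set of $A$. For $A=\mathcal{X}$ I need only two facts rather than the full shadow: $\mathcal{X}\in\partial_{\mathcal{X}}(\mathcal{A})$ trivially, whereas $\emptyset\notin\partial_{\mathcal{X}}(\mathcal{A})$ precisely because $[\emptyset,\mathcal{X}]=P(\mathcal{X})\not\subseteq\mathcal{A}$. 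This last point is exactly where the assumption $\mathcal{A}\subsetneq P(\mathcal{X})$ is used, and is the one place that genuinely invokes that $\mathcal{A}$ is quasi (and not fully) downward closed.

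With these shadows in hand, the maximality comparisons are routine. For $\Max^q(\mathcal{A})\subseteq\Crit(\mathcal{A})$: given $A\in\Max(\mathcal{A}')$, its shadow $P(A)$ cannot sit strictly inside $P(A')$ for $A'\in\mathcal{A}'$ (that would force $A\subsetneq A'$, contradicting maximality) nor inside $\partial_{\mathcal{X}}(\mathcal{A})$ (as $\emptyset\in P(A)$ but $\emptyset\notin\partial_{\mathcal{X}}(\mathcal{A})$), so $A$ is critical; and $\mathcal{X}$ is critical because $\mathcal{X}\in\partial_{\mathcal{X}}(\mathcal{A})$ while $\mathcal{X}\notin P(A)=\partial_A(\mathcal{A})$ for every $A\in\mathcal{A}'$, so $\partial_{\mathcal{X}}(\mathcal{A})$ is a proper subset of no other shadow. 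For the reverse inclusion $\Crit(\mathcal{A})\subseteq\Max^q(\mathcal{A})$: if $A\in\mathcal{A}'$ fails to be maximal in $\mathcal{A}'$, choose $A'\in\mathcal{A}'$ with $A\subsetneq A'$; then $\partial_A(\mathcal{A})=P(A)\subsetneq P(A')=\partial_{A'}(\mathcal{A})$ witnesses that $A$ is not critical. Thus every critical set is either $\mathcal{X}$ or a maximal element of $\mathcal{A}'$, i.e.\ it lies in $\Max^q(\mathcal{A})$.

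I expect the only delicate point—more bookkeeping than genuine obstacle—to be the set $\mathcal{X}$ itself: its convex shadow is not a power set, so it must be compared against the power-set shadows through explicit separating elements ($\mathcal{X}$ on one side, $\emptyset$ on the other). One must also keep the standing convention firmly in view, namely that quasi downward closed families are strictly contained in $P(\mathcal{X})$; without this, $\emptyset$ would enter $\partial_{\mathcal{X}}(\mathcal{A})$ and the clean split between $\Max(\mathcal{A}')$ and $\{\mathcal{X}\}$ would collapse.
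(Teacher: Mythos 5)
Your proof is correct and follows essentially the same route as the paper's: both arguments rest on the observation that (quasi) downward closure forces the shadow of every non-extremal member to be comparable exactly as the sets themselves are, and both use the standing assumption $\mathcal{A}\subsetneq P(\mathcal{X})$ to separate the shadow of $\mathcal{X}$ (dually, of $\emptyset$) from the others. The only cosmetic differences are that you prove the $\Max^q$/$\Crit$ half explicitly and dualize, whereas the paper does the $\Min^q$/$\Crit^d$ half, and that you compute the shadows $\partial_A(\mathcal{A})=P(A)$ outright rather than arguing directly with shadow containments.
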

\begin{proof}
We show first that $\Min^q(\mathcal{B})\subseteq\Crit^d(\mathcal{B})$.
    Let $B\in\Min^q(\mathcal{B})$ and $H\in\mathcal{B}$ such that $\partial_B^d(\mathcal{B})\subseteq\partial_{H}^d(\mathcal{B})$. Then $B\in\partial_B^d(\mathcal{B})\subseteq\partial_{H}^d(\mathcal{B})$, so $H\subseteq B$. Since $B$ is minimal, $B=H$ or $H=\emptyset$.     
    If $B=H$, then $\partial_B^d(\mathcal{B})=\partial_{H}^d(\mathcal{B})$, implying that $B\in\Crit^d(\mathcal{B})$.
    Suppose that $H=\emptyset$ and $B\neq\emptyset$. By upward closure, $\mathcal{X}\in \partial_B^d(\mathcal{B})$. Now, $\mathcal{X}\in\partial_B^d(\mathcal{B})\subseteq\partial_{\emptyset}^d(\mathcal{B})$, hence $\mathcal{B}=P(\mathcal{X})$, a contradiction.  
  
Let us now show that $\Crit^d (\mathcal{B})\subseteq\Min^q(\mathcal{B})$.    Let $G,G'\in\mathcal{B}$ be nonempty sets such that $G\in\Crit^d(\mathcal{B})$ and $G'\subseteq G$. It suffices to show that $G'=G$. 
    Suppose that $B\in\partial^d_G(\mathcal{B})$.
By the upward closure of $\mathcal{B}$, 
$[G',B]\subseteq\mathcal{B}$. 
Hence, $B\in\partial^d_{G'}(\mathcal{B})$ and $\partial^d_G(\mathcal{B})\subseteq\partial^d_{G'}(\mathcal{B})$. Since $G$ is dual critical, it must be that $G=G'$. We thus conclude that $\Min^q(\mathcal{B})=\Crit^d(\mathcal{B})$; the proof of  $\Max^q(\mathcal{A})=\Crit(\mathcal{A})$ is symmetrical.
\end{proof}

Consequently, we obtain a useful way to calculate the upper dimension for quasi downward closed 
families and the dual upper dimension for quasi upward closed 
families.
 
\begin{proposition}
\label[proposition]{quasi dc and uc prop}
Let $\mathcal{A}\subsetneq P(\mathcal{X})$ be a 
quasi downward closed family, and let $\mathcal{B}\subsetneq P(\mathcal{X})$ be a 
quasi upward closed family. Then  $\Dim(\mathcal{A})=|\Max^q(\mathcal{A})|$ and 
$\Dim^d(\mathcal{B})=|\Min^q(\mathcal{B})|$. 

\end{proposition}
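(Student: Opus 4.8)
The plan is to prove the first equality, $\Dim(\mathcal{A})=|\Max^q(\mathcal{A})|$ for quasi downward closed $\mathcal{A}\subsetneq P(\mathcal{X})$; the second, $\Dim^d(\mathcal{B})=|\Min^q(\mathcal{B})|$, then follows by the $\subseteq$/$\supseteq$ duality between $\partial$ and $\partial^d$ already exploited in \Cref{W minmax and crit dim lemma}. Throughout I write $\mathcal{A}'=\mathcal{A}\setminus\{\mathcal{X}\}$, which is downward closed. The upper bound is immediate from the two results in hand, exactly as \Cref{dc and uc prop} was: by \Cref{W minmax and crit dim lemma} we have $\Crit(\mathcal{A})=\Max^q(\mathcal{A})$, and by \Cref{Zorn lemma} \cref{Zorn item} there is a dominating family $\mathcal{G}\subseteq\Crit(\mathcal{A})$ with $|\mathcal{G}|=\Dim(\mathcal{A})$, whence $\Dim(\mathcal{A})=|\mathcal{G}|\le|\Crit(\mathcal{A})|=|\Max^q(\mathcal{A})|$. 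All the work is in the reverse inequality.

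For the lower bound the strategy is to show that $\Crit(\mathcal{A})=\Max^q(\mathcal{A})$ is \emph{irredundant} as a dominating family, forcing the family $\mathcal{G}$ above to be all of $\Crit(\mathcal{A})$. Concretely, I would attach to each $A\in\Crit(\mathcal{A})$ a \emph{private witness} $e_A\in\partial_A(\mathcal{A})$ lying in no other critical shadow, i.e. $e_A\notin\partial_{A'}(\mathcal{A})$ whenever $A'\in\Crit(\mathcal{A})$ and $A'\neq A$. Granting this, suppose some $A_0\in\Crit(\mathcal{A})\setminus\mathcal{G}$. Since $e_{A_0}\in\mathcal{A}$ and $\mathcal{G}$ dominates, there is $G\in\mathcal{G}$ with $[e_{A_0},G]\subseteq\mathcal{A}$, that is $e_{A_0}\in\partial_G(\mathcal{A})$; but $G\in\mathcal{G}\subseteq\Crit(\mathcal{A})$ and $G\neq A_0$, contradicting privacy. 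Hence $\mathcal{G}=\Crit(\mathcal{A})$ and $\Dim(\mathcal{A})=|\Crit(\mathcal{A})|=|\Max^q(\mathcal{A})|$.

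It remains to produce the witnesses. For $A=\mathcal{X}$ take $e_{\mathcal{X}}=\mathcal{X}$, which lies in $\partial_{A'}(\mathcal{A})$ only if $\mathcal{X}\subseteq A'$, forcing $A'=\mathcal{X}$. For $A\in\Max(\mathcal{A}')$ I would use that $\partial_A(\mathcal{A})=P(A)$ (every subset of $A$ lies in $\mathcal{A}'$ by downward closure) and split into cases. If $A$ is \emph{not} a coatom of $\mathcal{X}$, then $e_A=A$ works: no other maximal set contains $A$, and $A\notin\partial_{\mathcal{X}}(\mathcal{A})$ because some $D$ with $A\subsetneq D\subsetneq\mathcal{X}$ would otherwise have to lie in $\mathcal{A}'$, contradicting maximality of $A$. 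The delicate case is a coatom $A=\mathcal{X}\setminus\{y\}\in\mathcal{A}$, where $A\in\partial_{\mathcal{X}}(\mathcal{A})$ and so $A$ itself fails. Here I would pick $t_0\in A$ with $\mathcal{X}\setminus\{t_0\}\notin\mathcal{A}$ (such $t_0$ exists, for otherwise every coatom would lie in $\mathcal{A}$ and downward closure of $\mathcal{A}'$ would force $\mathcal{A}=P(\mathcal{X})$) and set $e_A=A\setminus\{t_0\}$. Then the only supersets of $e_A$ in $\mathcal{A}$ are $e_A$ and $A$, since the remaining candidates $\mathcal{X}\setminus\{t_0\}$ and $\mathcal{X}$ are absent; thus $A$ is the unique maximal set above $e_A$, giving $e_A\notin\partial_{A'}(\mathcal{A})$ for every maximal $A'\neq A$, while the missing superset $\mathcal{X}\setminus\{t_0\}$ shows $e_A\notin\partial_{\mathcal{X}}(\mathcal{A})$.

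I expect this coatom subcase to be the main obstacle: there the natural witness (the maximal set itself) is shared with $\partial_{\mathcal{X}}(\mathcal{A})$, and one must descend one level while simultaneously keeping the set below a unique maximal element and outside $\partial_{\mathcal{X}}(\mathcal{A})$; checking that the chosen $e_A$ avoids \emph{every} other critical shadow is the crux. The dual statement for a quasi upward closed $\mathcal{B}$ is then obtained by the systematic replacement of $\subseteq$ by $\supseteq$, of $\Max$ and $\partial$ by $\Min$ and $\partial^d$, of $\mathcal{X}$ by $\emptyset$, and of coatoms by atoms, together with \Cref{Zorn lemma} \cref{Zorn item} for $\Dim^d$.
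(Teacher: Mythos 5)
Your proposal is correct and, once dualized, is essentially the paper's own argument: the paper proves the quasi upward closed case by contradiction, showing that if a quasi-minimal singleton $\{b\}$ were omitted from a supporting family contained in $\Crit^d(\mathcal{B})=\Min^q(\mathcal{B})$ it could only be covered via $\partial^d_\emptyset$, and then the set $\{a,b\}$ (where $\{a\}\notin\mathcal{B}$, which exists since $\mathcal{B}\subsetneq P(\mathcal{X})$) lies in $\mathcal{B}$ but in no remaining dual shadow. Your ``private witness'' $e_A=A\setminus\{t_0\}$ for a coatom $A$ is exactly the dual of that set $\{a,b\}$, and your irredundancy packaging (every critical set owns a witness shared with no other critical shadow) is a positive reformulation of the same one-problematic-case analysis, just carried out uniformly over all critical sets.
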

\begin{proof}
      By \Cref{Zorn lemma} \cref{Zorn item}, there is a family $\mathcal{H}$  supporting $\mathcal{B}$ such that $\mathcal{H}\subseteq\Crit^d(\mathcal{B})$ and $|\mathcal{H}|=\Dim^d(\mathcal{B})$.
By \Cref{W minmax and crit dim lemma}, we have that $\Crit^d (\mathcal{B})=\Min^q(\mathcal{B})$, hence 
$\mathcal{H}\subseteq \Crit^d(\mathcal{B})=\Min^q(\mathcal{B})$.
Suppose for a contradiction that there is  $B\in\Min^q(\mathcal{B})$ such that $B\not\in\mathcal{H}$.
Now for $H\in\mathcal{H}$ and quasi minimality of $B$, $B\in \partial^d_H(\mathcal{B})$ implies that $H=\emptyset$. Then $[\emptyset, B]\subseteq\mathcal{B}$, hence quasi minimality of $B$ implies that $B=\{b\}$ for some $b\in \mathcal{X}$. 
By the assumption $\mathcal{B}\subsetneq P(\mathcal{X})$ and quasi upward closure, there is another singleton $A=\{a\}\in \mathcal{X}$ such that $\{a\}\not\in\mathcal{B}$. Notice that $\{a,b\}\in\mathcal{B}$ by quasi upward closure, and the only $H'$ for which $\{a,b\}\in\partial^d_{H'}(\mathcal{B})$ is $H'=\{a,b\}$. 
Clearly, $\{a,b\}\not\in\Min^q(\mathcal{B})$ because $\{b\}\in\Min^q(\mathcal{B})$. Since $\mathcal{H}\subseteq\Min^q(\mathcal{B})$, we also have $\{a,b\}\not\in\mathcal{H}$, which means that $\mathcal{H}$ cannot support $\mathcal{B}$, a contradiction. 

The proof of $\Dim(\mathcal{A})=|\Max^q(\mathcal{A})|$ is analogous; find instead $A'= \mathcal{X}\setminus\{a\} \in\Max^q(\mathcal{A})$ and assume for a contradiction that it is not in the dominating family, and $B'=\mathcal{X}\setminus\{b\} \not\in \mathcal{A}$ such that their intersection $A'\cap B'$ is in $\mathcal{A}$ by quasi downward closure but not in any of the quasi maximal sets' convex shadows.
\end{proof}

Note that the result of \Cref{quasi dc and uc prop} is obtained without the corresponding results to \Cref{minmax and crit dim lemma} \cref{MinMax item1}.  For a quasi downward closed family $\mathcal{A}$ and quasi upward closed family $\mathcal{B}$, the proof of \Cref{quasi dc and uc prop} shows that there exists a subfamily $\mathcal{G}$ that dominates $\mathcal{A}$ and a subfamily $\mathcal{H}$ that supports $\mathcal{B}$, such that $\Max^q(\mathcal{A})=\mathcal{G}$ and $\Min^q(\mathcal{B})=\mathcal{H}$. However, there can also be other suitable choices of dominating and supporting subfamilies $\mathcal{G'}$ and $\mathcal{H'}$ for which 
%
neither $\Max^q(\mathcal{A})\subseteq\mathcal{G'}$ nor $\Min^q(\mathcal{B})\subseteq\mathcal{H'}$ hold, exemplified next.

\begin{example} \label[example]{example quasi min fail}
Let $\mathcal{B}\subseteq P(\{a,b\})$ be a family defined by
$$\mathcal{B}:=\{\emptyset, \{b\}, \{a,b\}\}.$$
We note that $\mathcal{B}$ is both quasi downward and quasi upward closed. 
Now $\mathcal{H}:=\{\emptyset, \{a,b\}\}$ is both a dominating and a supporting family to $\mathcal{B}$, since $\partial_\emptyset(\mathcal{B})\cup \partial_{\{a,b\}}(\mathcal{B})=\{\emptyset\}\cup \{\{b\} , \{a,b\}\}=\mathcal{B}$ and $\partial^d_\emptyset(\mathcal{B})\cup \partial^d_{\{a,b\}}(\mathcal{B})=\{\emptyset, \{b\}\}\cup \{\{a,b\}\}=\mathcal{B}$. However, $\Max^q(\mathcal{B})=\{\{b\},\{a,b\}\}$ and $\Min^q(\mathcal{B})=\{\emptyset, \{b\}\}$ hence $\Max^q(\mathcal{B})\not\subseteq\mathcal{H}$ and
$\Min^q(\mathcal{B})\not\subseteq\mathcal{H}$. Nonetheless, $D(\mathcal{A})=2=|\Max^q(\mathcal{B})|$ and $D^d(\mathcal{A})=2=|\Min^q(\mathcal{B})|$.
\end{example}

We remark that \Cref{dc and uc prop} and \Cref{quasi dc and uc prop} show a natural correspondence between the dimensions and (quasi) downward and (quasi) upward closed families. Let $\mathcal{B}$ be a family of sets and define the downset $\downarrow\mathcal{B}=\{b\mid b\subseteq b' \text{ for some } b'\in\mathcal{B}\}$, or equivalently $\downarrow\mathcal{B}=\bigcup_{b\in\mathcal{B}}\mathcal{P}(b)$, and upset $\uparrow\mathcal{B}=\{b\subseteq\mathcal{X}\mid b'\subseteq b\text{ for some }b'\in\mathcal{B}\}$. For any downward closed family $\mathcal{A}$ with $\Max(\mathcal{A})=\{a_1,a_2,\dots, a_n\}$, we have that $\downarrow\{a_1,a_2,\dots, a_n\}=\mathcal{A}$ and $\Dim(\mathcal{A})=n$. In this sense, the upper dimension naturally captures the size of the smallest cover of a downward closed family. The case for upsets of upward closed families and their dual upper dimension is symmetrical;  For any upward closed family $\mathcal{A}$ with $\Max(\mathcal{A})=\{a_1,a_2,\dots, a_n\}$, we have that $\uparrow\{a_1,a_2,\dots, a_n\}=\mathcal{A}$ and $\Dim^d(\mathcal{A})=n$. 
Furthermore, for any quasi downward closed family $\mathcal{A}$ with $\Max^q(\mathcal{A})=\{a_1,a_2,\dots, a_n, \mathcal{X}\}$, we have that $\downarrow\{a_1,a_2,\dots, a_n\}\cup\{\mathcal{X}\}=\mathcal{A}$ and $\Dim(\mathcal{A})=n+1$. For any upward closed family $\mathcal{A}$ with $\Max^q(\mathcal{A})=\{a_1,a_2,\dots, a_n, \emptyset\}$, we have that $\uparrow\{a_1,a_2,\dots, a_n\}\cup\{\emptyset\}=\mathcal{A}$ and $\Dim^d(\mathcal{A})=n+1$.

\section{Dimension calculations} \label{section 3}

In this section, we present the necessary definitions for team-based quantified propositional logic and its extensions with different dependency atoms, and calculate upper dimensions for these extensions.

\subsection{Quantified propositional logic}

We recall the syntax and semantics of quantified propositional logic and several atoms based on team semantics. We also recall the closure properties of the different extensions of quantified propositional logic we study. 

Let $\Prop:=\{p_i\mid i\in\mathbb{N}\}$ be the set of propositional symbols. For notational convenience, we also sometimes use other letters, e.g. $q$,$r$,$t$, for propositional symbols.

\begin{definition}
The BNF syntax of quantified propositional logic (\cite{Hannula16}), or $\QPL$, is defined as follows
\[
\phi::= p_i \mid \neg p_i \mid (\phi\wedge\phi) \mid (\phi\lor\phi) \mid \exists p_i\phi \mid \forall p_i\phi,
\]
where $p_i\in \Prop$.
\end{definition}
The formulas of $\QPL$ are often called \emph{quantified boolean formulas}.
For a formula $\phi\in\QPL$, we use $\neg\phi$ to denote the formula obtained by pushing the negation to the front of the proposition symbols in the usual way. 
We denote by $\PL$ the fragment of $\QPL$, where existential and universal quantification is disallowed.

An evaluation $s\colon\Prop\to\{0,1\}$ is a mapping that associates each propositional symbol $p_i$ with a truth value 0 or 1. For any tuple $\bar{p}=p_{1}\dots p_{k}$ of propositional variables, we often use the shorthand $s(\bar{p})$ to denote the value sequence $s(p_1)\dots s(p_n)$. We define the set $\Eval$ of all evaluations (over $\Prop$), and its restriction to the variables in $\bar{p}$, $\Eval^{\bar{p}}$,
\begin{align*}
   \Eval:=&\{s\mid s\colon\Prop\to\{0,1\}\}, \text{ and } \\
  \Eval^{\bar{p}}:=&\{s\mid  \exists t\in\Eval \text{ s.t. } s=t\restriction\{p_{1},\dots,p_{k}\}\}.
\end{align*}

A propositional team $T$ over the propositional variables $\bar{p}$ is a set of evaluations whose domains are restricted to $\{p_{1},\dots,p_{k}\}$, i.e., $T\subseteq\Eval^{\bar{p}}$.
For $a\in\{0,1\}$, we define $s(a/p_i)$ as the evaluation $s'$ such that $s'(p_i)=a$ and $s'(x)=s(x)$, whenever $x\neq p_i$.
For any $F\colon T\to\{\{0\},\{1\},\{0,1\}\}$, define 
$T[F/p_i]:=\{s(a/p_1)\mid s\in T,a\in F(s)\}$ and $T[\{0,1\}/p_i]:=\{s(a/p_i)\mid s\in T,a\in\{0,1\}\}$.

For a formula $\phi\in \QPL$ with variables from $\bar{p}$ and a team $T$ over $\bar{p}$, we write $T\models\phi$ to mean that the team $T$ \textit{satisfies} the formula $\phi$, defined as follows
\begin{align*}
T\models p_i               \, \text{ iff }\,& \text{for } s(p_i)=1 \text{ for all } s\in T, \\
T\models \neg p_i          \, \text{ iff }\,& \text{for } s(p_i)=0 \text{ for all } s\in T, \\
T\models \phi\wedge\psi    \, \text{ iff }\,& T\models\phi \text{ and } T\models\psi, \\
T\models\phi\lor\psi       \, \text{ iff }\,& \text{there are } S,S'\subseteq T \text{ such that } S\cup S'=T,\, S\models\phi \text{ and } S'\models\psi, \\
T\models \exists p_i\phi   \, \text{ iff }\,& \text{there is } F\colon T\to\{\{0\},\{1\},\{0,1\}\} \text{ such that } T[F/p_i]\models\phi, \\
T\models \forall p_i\phi   \, \text{ iff }\,& T[\{0,1\}/p_i]\models\phi.
\end{align*}

We recall the semantic clauses for dependence, anonymity, inclusion, exclusion and conditional independence atoms.
\begin{align*}
T\models\ \dep(\bar{p};q)                     \, \text{ iff }\,& \text{for all } s,s'\in T \text{ such that } s(\bar{p})=s'(\bar{p}), \text{ we have } s(q)=s'(q), \\
T\models\bar{p}\Upsilon q                   \, \text{ iff }\,& \text{for all } s\in T, \text{ there is } s'\in T \text{ such that } s(\bar{p})=s'(\bar{p}) \text{ and } s(q)\neq s'(q), \\
T\models\bar{p}\subseteq\bar{q}             \, \text{ iff }\,& \text{for all } s\in T, \text{ there is } s'\in T \text{ such that } s(\bar{p})=s'(\bar{q}), \\
T\models\bar{p}\mid\bar{q}                  \, \text{ iff }\,& \text{for all } s,s'\in T, \text{ we have } s(\bar{p})\neq s'(\bar{q}), \\
T\models \bar{q}\perp_{\bar{p}}\bar{r}      \, \text{ iff }\,& \text{for all } s,s'\in T \text{ such that } s(\bar{p})=s'(\bar{p}), \text{ there is } s''\in T \text{ such that } \\
&\, s''(\bar{p})=s(\bar{p}),\ s''(\bar{q})=s(\bar{q}),\ \text{and } s''(\bar{r})=s'(\bar{r}).
\end{align*}

For inclusion and dependence atoms $\bar{p}\subseteq\bar{q}$ and $\bar{p}\mid\bar{q}$, we assume that the sequences $\bar{p}$ and $\bar{q}$ are of the same length, i.e, $|\bar{p}|=|\bar{q}|$.

The \emph{arity} of the dependence $\dep(\bar{p};q)$, anonymity $\bar{p}\Upsilon q$, inclusion $\bar{p}\subseteq\bar{q}$, and exclusion atom $\bar{p}\mid\bar{q}$  is $|\bar{p}|$. The arity of the conditional independence atom $\bar{q}\perp_{\bar{p}}\bar{r} $ is a triple $(|\bar{p}|,|\bar{q}|,|\bar{r}|)$. The non-conditional independence atom $\bar{q}\perp\bar{r}$ thus has arity $(0,|\bar{q}|,|\bar{r}|)$.

Let $\circledast$ be any team semantical atom, i.e., $\circledast$ can be from $\{\dep(\dots),\Upsilon, \subseteq, \mid, \perp\}$ or one of the atoms defined later in this paper.
We define $\QPL(\circledast)$ as the extension of $\QPL$ with $\circledast$-atoms, i.e., $\QPL(\circledast)$ is the set of formulas over $\Prop$, which are constructed by the rules obtained by extending the BNF syntax of $\QPL$ with the atom $\circledast$.
For $\circledast$-atoms whose arity is a natural number, let $k,n\geq 0$, and define $\QPL(\circledast_k)_n$ as the set of $\QPL(\circledast)$-formulas $\phi$, where there are at most $n$ appearances of $\circledast$-atoms in $\phi$, and each of these $\circledast$-atoms are at most $k$-ary.
The fragments $\PL(\circledast)$ and $\PL(\circledast_k)_n$ where quantification is disallowed are defined analogously.
We denote by $\Var(\bar{p})$, $\Var(\phi)$, and $\Var(T)$, the sets of variables appearing in the tuple $\bar{p}$, formula $\phi$, and team $T$, respectively. Let $\circledast\in\{\dep(\dots),\Upsilon, \subseteq, \mid, \perp\}$. The set of free variables of a formula $\phi\in \QPL(\circledast)$ is denoted by $\FV(\phi)$, and defined in the usual way. 

In the context of team semantics, it is common to consider a property called \emph{locality}, which essentially states that the satisfaction of a formula depends only on the evaluation of its free variables. This property is not trivially satisfied in all variants of team semantics. For example, under the so-called \emph{strict} semantics, disjunction and inclusion atoms can be used to construct formulas for which locality fails \cite{galliani12}. The following proposition was introduced in \cite{hannula17} for various extensions of quantified propositional logic. A straightforward induction proof can be used to show the following proposition for the atoms under the semantics considered in this paper. For $V\subseteq\Prop$, define $T\restriction V:=\{s\restriction V\mid s\in T\}$.  
\begin{proposition}[Locality]
    Let $\phi\in\QPL(\circledast)$, $\circledast\in\{\dep(\dots),\Upsilon, \subseteq, \mid, \perp\}$ Then for any $V\subseteq\Prop$ such that $\FV(\phi)\subseteq V\subseteq\Var(T)$, 
    \[
T\models\phi \quad\text{ iff }\quad T\restriction V\models \phi.
    \]
\end{proposition}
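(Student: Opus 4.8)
The plan is to prove the locality property by induction on the structure of the formula $\phi \in \QPL(\circledast)$, showing that for every team $T$ and every $V$ with $\FV(\phi) \subseteq V \subseteq \Var(T)$, we have $T \models \phi$ iff $T \restriction V \models \phi$. Before starting the induction, I would record a few bookkeeping facts about restriction that are used repeatedly: that $\FV$ is computed syntactically in the usual way (so subformulas have free variables contained in those of the whole formula, modulo the bound variable in the quantifier cases), that restriction commutes with the team operations appearing in the semantics, and in particular the compatibility lemmas $(T \restriction V)[F/p_i] = (T[F'/p_i]) \restriction (V \cup \{p_i\})$ for a suitably transported supplementing function $F'$, and $(T \restriction V)[\{0,1\}/p_i] = (T[\{0,1\}/p_i]) \restriction (V \cup \{p_i\})$. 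These reduce each inductive case to a purely mechanical check.

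First I would handle the base cases. For the literals $p_i$ and $\neg p_i$, the free variable is $p_i \in V$, so $s(p_i) = (s \restriction V)(p_i)$ for every $s \in T$, and the satisfaction clauses match directly. For each atom $\circledast \in \{\dep(\dots), \Upsilon, \subseteq, \mid, \perp\}$, the satisfaction condition only quantifies over the values $s(\bar{p}), s(\bar{q}), s(\bar{r})$ of the variables actually occurring in the atom, all of which lie in $\FV(\phi) \subseteq V$; since restriction to $V$ is a bijection-respecting map on these coordinates (it may merge evaluations agreeing on $V$, but two evaluations agreeing on $\Var(T)\supseteq V$ restrict to the same element anyway, and the atoms' conditions are insensitive to such duplication), the condition holds for $T$ iff it holds for $T \restriction V$. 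The key observation making the atom cases uniform is that each atom's truth value depends only on the image $T \restriction \Var(\text{atom})$, which is unchanged whether we first restrict to $V$ or not.

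For the inductive step I would treat the connectives and quantifiers. Conjunction is immediate from the induction hypothesis applied to both conjuncts, since $\FV(\phi \wedge \psi) = \FV(\phi) \cup \FV(\psi)$. Disjunction requires showing that a splitting $T = S \cup S'$ witnessing $T \models \phi \vee \psi$ induces a splitting of $T \restriction V$ and conversely; here I would take $(S \restriction V, S' \restriction V)$ in one direction, and in the other direction lift a splitting of $T \restriction V$ by taking preimages under the restriction map, using the induction hypothesis and the fact that $\FV(\phi), \FV(\psi) \subseteq \FV(\phi \vee \psi) \subseteq V$. The quantifier cases $\exists p_i \phi$ and $\forall p_i \phi$ use the compatibility lemmas above together with the induction hypothesis applied with $V \cup \{p_i\}$ in place of $V$; note $\FV(\exists p_i \phi) = \FV(\phi) \setminus \{p_i\}$, so $\FV(\phi) \subseteq V \cup \{p_i\}$ as required, and one must check $V \cup \{p_i\} \subseteq \Var(T[F/p_i])$, which holds since the update introduces $p_i$ into the team's variables.

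The main obstacle is the disjunction case in the direction that lifts a splitting of $T \restriction V$ back to a splitting of $T$: the restriction map $s \mapsto s \restriction V$ need not be injective on $T$, so a decomposition $T \restriction V = R \cup R'$ must be pulled back carefully — concretely, I would set $S = \{s \in T \mid s \restriction V \in R\}$ and $S' = \{s \in T \mid s \restriction V \in R'\}$, verify $S \cup S' = T$ and $S \restriction V = R$, $S' \restriction V = R'$, and then invoke the induction hypothesis (using $\FV(\phi) \subseteq V \subseteq \Var(S) $, which follows since $\Var(S) = \Var(T) \supseteq V$) to transfer satisfaction back to the unrestricted subteams. Once this lifting is shown to respect $V$-restrictions, the remaining verifications are the routine bookkeeping that the compatibility lemmas were designed to absorb.
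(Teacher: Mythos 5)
Your proposal is correct and is precisely the ``straightforward induction'' that the paper asserts without writing out (the paper only remarks that such an induction works, citing earlier literature), including the two points that actually require care: that each atom's satisfaction depends only on the projection of the team to the atom's variables, and that a splitting of $T\restriction V$ for disjunction must be lifted via preimages since restriction need not be injective. No gaps.
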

Due to this locality property, we can define for any $\phi\in \QPL(\circledast)$ and $\bar{p}$ such that $\FV(\phi)\subseteq\Var(\bar{p})$, the following family of teams, called a \emph{team property},
\[
\lVert \phi\rVert^{\bar{p}}:=\{T\subseteq\Eval^{\bar{p}}\mid T\models \phi\}.
\]

We say that a formula has a property such as downward closure if the $\lVert \phi\rVert^{\bar{p}}$ of teams satisfying the formula is downward closed. Moreover, we say that a logic $\mathcal{L}$ has a property if each formula $\phi\in\mathcal{L}$ has the property. Dependence and exclusion atoms are downward closed, anonymity and inclusion atoms are union closed, and (conditional) independence atoms have none of the properties studied so far. 
We say that a formula $\phi$ has the empty team property if $\emptyset\in \lVert \phi\rVert^{\bar{p}}$. Now,  
\begin{itemize}
\item $\QPL$ is downward closed, union closed and has the empty team property,
    \item $\QPL(\circledast)$ with $\circledast\in\{\dep(\dots),\Upsilon, \subseteq, \mid, \perp\}$ has the empty team property,
    \item $\QPL(\dep(\dots))$ and $\QPL(\mid)$ are downward closed,
    \item  $\QPL(\Upsilon)$ and $\QPL(\subseteq)$ are union closed.
\end{itemize}

It is straightforward to confirm that a formula $\phi$ being downward closed, union closed and having the empty team property is equivalent to $\phi$ being \emph{flat}, i.e., $T\models\phi$ iff $s\models\phi$ for all $s\in T$. Thus $\QPL$ is flat, but none of the atoms in $\{\dep(\dots),\Upsilon, \subseteq, \mid, \perp\}$ are flat.

For formulas $\phi$ and $\psi$, we write 
\begin{align*}
    \phi\equiv\psi, \quad \text{ if } \quad &\FV(\phi)=\FV(\psi) \text{ and we have }  T\models\phi \text{ iff } T\models\psi\\ &\text{ for any } T \text{ such that } \FV(\phi)\subseteq\Var(T).
\end{align*}
Let $\mathcal{L}$ and $\mathcal{L}'$ be two logics with propositional team semantics. We write $\mathcal{L}\leq\mathcal{L}'$, if for every formula $\phi\in\mathcal{L}$, there is a formula $\psi\in\mathcal{L}'$ such that $\phi\equiv\psi$. The strict inequality $\mathcal{L}<\mathcal{L}'$ and the equivalence $\mathcal{L}\equiv\mathcal{L}'$ of logics are defined from $\mathcal{L}\leq\mathcal{L}'$ in the obvious way. If $\mathcal{L}\equiv\mathcal{L}'$, we say the logics $\mathcal{L}$ and $\mathcal{L}'$ are \emph{expressively equivalent}.



\subsection{Upper dimensions for extensions of $\QPL$}

We calculate the upper dimensions for quantifiers and atoms in the propositional setting by adapting the results from the first-order setting in \cite{Hella_Luosto_Vaananen_2024}.  

First, we consider the upper dimensions of conjunction, disjunction and the quantifiers in the propositional setting. These connectives and quantifiers are so-called point-wise Kripke-operators, covered more generally in \cite{Hella_Luosto_Vaananen_2024}.

\begin{proposition}[\cite{Hella_Luosto_Vaananen_2024}]\label[proposition]{kripkeop}
     Let $\phi,\psi\in\QPL(\circledast)$, $\circledast\in\{\dep(\dots),\Upsilon, \subseteq, \mid, \perp\}$, be such that $FV(\phi)\cup FV(\psi)\subseteq\Var(\bar{p})$.   Then the following claims hold.
    \begin{enumerate} [label=(\roman*)]
        \item $\Dim(\lVert\phi\wedge\psi\rVert^{\bar{p}})\leq\Dim(\lVert\phi\rVert^{\bar{p}})\cdot \Dim(\lVert\psi\rVert^{\bar{p}})$,
        \item $\Dim(\lVert\phi\lor\psi\rVert^{\bar{p}})\leq\Dim(\lVert\phi\rVert^{\bar{p}})\cdot \Dim(\lVert\psi\rVert^{\bar{p}})$,
        \item $\Dim(\lVert\exists p_i\phi\rVert^{\bar{p}})\leq\Dim(\lVert\phi\rVert^{\bar{p}})$,
        \item $\Dim(\lVert\forall p_i\phi\rVert^{\bar{p}})\leq\Dim(\lVert\phi\rVert^{\bar{p}})$.
    \end{enumerate}
\end{proposition}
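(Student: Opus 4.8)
The plan is to argue entirely at the level of families of teams over the fixed base set $\mathcal{X}=\Eval^{\bar p}$, writing $\mathcal{A}=\lVert\phi\rVert^{\bar p}$ and $\mathcal{B}=\lVert\psi\rVert^{\bar p}$, and to use throughout the characterization of \Cref{Zorn lemma} \cref{Zorn item def}: for a finite family, $\Dim$ is the least size of a subfamily whose convex shadows cover the family. To bound $\Dim$ of each compound family it then suffices to exhibit, for every member $W$ of that family, a set $K$ in a small candidate subfamily with $W\in\partial_K$, i.e.\ $[W,K]\subseteq$ the family and $K$ in the family. I will build the candidate subfamilies from minimal-size dominating families $\mathcal{G}$ of $\mathcal{A}$ and $\mathcal{H}$ of $\mathcal{B}$ (so $\mathcal{G}\subseteq\mathcal{A}$, $\mathcal{H}\subseteq\mathcal{B}$), which makes the cardinality bound $|\mathcal{G}|\cdot|\mathcal{H}|=\Dim(\mathcal{A})\cdot\Dim(\mathcal{B})$ automatic.

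For conjunction, $\lVert\phi\wedge\psi\rVert^{\bar p}=\mathcal{A}\cap\mathcal{B}$. Given $W\in\mathcal{A}\cap\mathcal{B}$, choose $G\in\mathcal{G}$ with $[W,G]\subseteq\mathcal{A}$ and $H\in\mathcal{H}$ with $[W,H]\subseteq\mathcal{B}$. Then $W\subseteq G\cap H$ and $[W,G\cap H]=[W,G]\cap[W,H]\subseteq\mathcal{A}\cap\mathcal{B}$, so $G\cap H\in\mathcal{A}\cap\mathcal{B}$ has $W$ in its convex shadow; hence $\{G\cap H\mid G\in\mathcal{G},\,H\in\mathcal{H}\}$ dominates $\mathcal{A}\cap\mathcal{B}$. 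For disjunction, $\lVert\phi\vee\psi\rVert^{\bar p}$ is the union-product $\mathcal{A}\sqcup\mathcal{B}=\{A\cup B\mid A\in\mathcal{A},\,B\in\mathcal{B}\}$. Given $W=A\cup B$, pick $G,H$ with $A\subseteq G$, $B\subseteq H$ as above; the crux is that $[W,G\cup H]\subseteq\mathcal{A}\sqcup\mathcal{B}$ via an explicit splitting: for $W\subseteq Z\subseteq G\cup H$ set $A'=A\cup(Z\cap G)$ and $B'=B\cup(Z\cap H)$, whence $A\subseteq A'\subseteq G$ and $B\subseteq B'\subseteq H$ force $A'\in\mathcal{A}$, $B'\in\mathcal{B}$, while $A'\cup B'=Z$. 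Thus $\{G\cup H\mid G\in\mathcal{G},\,H\in\mathcal{H}\}$ dominates $\mathcal{A}\sqcup\mathcal{B}$, and again the size is at most $\Dim(\mathcal{A})\cdot\Dim(\mathcal{B})$.

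For the quantifiers I assume without loss of generality that $p_i\in\Var(\bar p)$, so the relevant team operations remain inside $\mathcal{X}=\Eval^{\bar p}$, and I write $\pi$ for the projection forgetting $p_i$. The universal case is cleanest: $\lVert\forall p_i\phi\rVert^{\bar p}=\sigma^{-1}(\mathcal{A})$ where $\sigma(T)=T[\{0,1\}/p_i]=\pi^{-1}(\pi(T))$ is monotone and union-preserving. Since the preimage of a convex family under a monotone map is convex (if $T_1\subseteq Z\subseteq T_2$ then $\sigma(T_1)\subseteq\sigma(Z)\subseteq\sigma(T_2)$), and $\sigma^{-1}$ commutes with unions of families, a minimal convex cover of $\mathcal{A}$ pulls back to a convex cover of $\sigma^{-1}(\mathcal{A})$ of the same size, giving $\Dim\le\Dim(\mathcal{A})$. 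For the existential case I first observe that the teams realizable as $T[F/p_i]$ are exactly those $U$ with $\pi(U)=\pi(T)$, so that $\lVert\exists p_i\phi\rVert^{\bar p}=\{T\mid\pi(T)\in\pi(\mathcal{A})\}$. Taking $\mathcal{G}$ dominating $\mathcal{A}$, I claim $\{\pi^{-1}(\pi(G))\mid G\in\mathcal{G}\}$ dominates this family: given $T$ with witness $U\in\mathcal{A}$, $\pi(U)=\pi(T)$, and $G\in\mathcal{G}$ with $[U,G]\subseteq\mathcal{A}$, one verifies $[T,\pi^{-1}(\pi(G))]$ stays in the family using the lift $B=U\cup\bigl(G\cap\pi^{-1}(\pi(Z))\bigr)$, which satisfies $U\subseteq B\subseteq G$ (so $B\in\mathcal{A}$) and $\pi(B)=\pi(Z)$.

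I expect the two genuinely load-bearing steps to be the disjunction splitting and the existential lift: in both, convexity of the constructed family is not automatic, since neither the union-product nor the image under $\pi$ preserves convexity for arbitrary convex pieces, and it is precisely the placement of $A',B'$ (respectively $B$) inside the shadows $[A,G],[B,H]$ (respectively $[U,G]$) that keeps the intermediate sets in $\mathcal{A}$ and $\mathcal{B}$. A secondary but real obstacle is the bookkeeping around the bound variable $p_i$: ensuring via locality that moving between $\bar p$ and $\bar p\cup\{p_i\}$ does not affect the dimension, and confirming the exact description of which teams arise as $T[F/p_i]$.
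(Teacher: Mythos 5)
Your proof is correct, but it takes a genuinely more hands-on route than the paper. The paper disposes of all four items at once by citing the general machinery of \cite{Hella_Luosto_Väänänen_2024}: $\wedge,\vee,\exists,\forall$ are point-wise Kripke operators, such operators weakly preserve dominated convexity, and the product bound on $\Dim$ follows for any operator with that preservation property. You instead verify the bound connective by connective, exhibiting for each member $W$ of the compound family an explicit shadow top built from minimal dominating families of $\lVert\phi\rVert^{\bar p}$ and $\lVert\psi\rVert^{\bar p}$ ($G\cap H$ for conjunction, $G\cup H$ with the splitting $A'=A\cup(Z\cap G)$, $B'=B\cup(Z\cap H)$ for disjunction, and $\pi^{-1}(\pi(G))$ with the lift $B=U\cup(G\cap\pi^{-1}(\pi(Z)))$ for $\exists$); I checked these and they all go through, including your characterization of the teams of the form $T[F/p_i]$ as exactly those $U$ with $\pi(U)=\pi(T)$. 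What your approach buys is a self-contained proof readable without the Kripke-operator formalism; what it loses is generality — the paper's argument covers any point-wise Kripke operator for free. Two small points to tighten: (a) in the universal case, "pulls back to a convex cover of the same size" is not quite enough, since a dominating decomposition requires each convex piece to contain its own union as a member (cf.\ the definition of dominating subfamily and \Cref{Zorn lemma} \cref{Zorn item def}); this does hold when the pieces are convex shadows, and is cleanest if you name the shadow tops explicitly, e.g.\ $K_G=\{s\mid s(0/p_i),s(1/p_i)\in G\}$; (b) the candidate families such as $\{G\cap H\mid G\in\mathcal{G},H\in\mathcal{H}\}$ may contain sets outside the target family, so one should restrict to those members actually witnessed by some $W$ — this does not affect the cardinality bound.
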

\begin{proof} (A sketch of the proof idea from \cite{Hella_Luosto_Vaananen_2024}.)
    Let $X$ and $Y$ be nonempty sets, and $\mathcal{R}\subseteq\mathcal{P}(Y)\times\mathcal{P}(X)^n$. Define then the operator $\Delta_{\mathcal{R}}\colon\mathcal{P}(\mathcal{P}(X))^n\to\mathcal{P}(\mathcal{P}(Y))$ such that $A\in\Delta_{\mathcal{R}}(\mathcal{A}_0,\dots,\mathcal{A}_{n-1})$ iff $\exists A_0\in\mathcal{A}_0\dots\exists A_{n-1}\in\mathcal{A}_{n-1}$ such that $(A,A_0,\dots,A_{n-1})\in\mathcal{R}$. An operator $\Delta\colon\mathcal{P}(\mathcal{P}(X))^n\to\mathcal{P}(\mathcal{P}(Y))$ is a \emph{Kripke operator} if $\Delta=\Delta_{\mathcal{R}}$ for some $\mathcal{R}\subseteq\mathcal{P}(Y)\times\mathcal{P}(X)^n$. The connectives $\wedge,\lor$ and the quantifiers $\exists,\forall$ can be viewed a Kripke operators on families of teams. For example, since $\lVert\phi\wedge\psi\rVert^{\bar{p}}=\lVert\phi\rVert^{\bar{p}}\cap\lVert\psi\rVert^{\bar{p}}$, the conjunction corresponds to the Kripke operator of the relation $\mathcal{R}_{\cap}=\{(D,D,D)\mid D\in\mathcal{P}(X)\}$, i.e., the intersection of families. Moreover, these connectives and quantifiers can be shown to be \emph{point-wise} Kripke operators, which essentially means that for any $A\in\mathcal{P}(Y)$, the relation $\mathcal{R}[A]:=\{(A_0,\dots,A_{n-1})\mid(A,A_0,\dots,A_{n-1})\in\mathcal{R}\}$ is
completely determined by the relations $\mathcal{R}[\{a\}]$, $a\in A$. Point-wise Kripke operators $\Delta$ on finite $X$ and $Y$ \emph{weakly preserve dominated convexity}. This means that $\Delta(\mathcal{A}_0,\dots,\mathcal{A}_{n-1})$ is dominated and convex (or empty), if each $\mathcal{A}_i$, $0\leq i< n$, is dominated and convex. If $\Delta$ weakly preserves dominated convexity, then it can be shown that $\Dim(\Delta(\mathcal{A}_0,\dots,\mathcal{A}_{n-1}))\leq \Dim(\mathcal{A}_0)\cdot \ldots \cdot\Dim(\mathcal{A}_{n-1})$.
\end{proof}

We examine the upper dimensions of formulas in light of the locality property; only the variables in the formula affect its upper dimension. See \Cref{app1} for the proof.

\begin{proposition}[\cite{Hella_Luosto_Vaananen_2024}]\label[proposition]{dim and locality prop}
Let $\phi\in\QPL(\circledast)$, $\circledast\in\{\dep(\dots),\Upsilon, \subseteq, \mid, \perp\}$, be such that $\FV(\phi)\subseteq\Var(\bar{p})\subseteq\Var(\bar{p}')$.
\begin{enumerate}[label=(\roman*)]
    \item   $\Dim(\lVert\phi\rVert^{\bar{p}})=\Dim(\lVert\phi\rVert^{\bar{p}'})$.

    \item  
$\Dim^d(\lVert\phi\rVert^{\bar{p}})\leq2^{n}\cdot\Dim^d(\lVert\phi\rVert^{\bar{p}'})$, where 
    \[
    n=\begin{cases}
        0, \text{ if } \lVert\phi\rVert^{\bar{p}} \text{ is convex,}\\
        |\Var(\bar{p}')\setminus\Var(\bar{p})|\cdot\max\{|T|\mid T\in\lVert\phi\rVert^{\bar{p}}\}, \text{ otherwise}.
    \end{cases}
    \]
\end{enumerate}
   
\end{proposition}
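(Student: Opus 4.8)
The plan is to recognize the passage from $\bar p$ to the larger tuple $\bar p'$ as a cylindrification and to track how covers by convex families transform under it. Put $V=\Var(\bar p)\subseteq V'=\Var(\bar p')$, $d=|V'\setminus V|$, and let $X=\Eval^{\bar p}$, $X'=\Eval^{\bar p'}$ with $\mathcal A:=\lVert\phi\rVert^{\bar p}\subseteq P(X)$ and $\mathcal A':=\lVert\phi\rVert^{\bar p'}\subseteq P(X')$. The restriction $\pi\colon X'\to X$, $s\mapsto s\restriction V$, is surjective with all fibres of size $2^{d}$, and by the Locality proposition $T'\models\phi$ iff $\pi[T']\models\phi$, so $\mathcal A'=\pi^{-1}[\mathcal A]=\{T'\subseteq X'\mid\pi[T']\in\mathcal A\}$. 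I will work with two induced maps: the image map $T'\mapsto\pi[T']$ (monotone, union-preserving) and the cylinder section $c(T):=\pi^{-1}(T)$ (monotone and \emph{injective}, preserving both unions and intersections, with $\pi[c(T)]=T$ and $c(T)\in\mathcal A'\iff T\in\mathcal A$).

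The technical heart is a preservation lemma: both $\pi^{-1}[\cdot]$ and $c^{-1}[\cdot]$ carry convex families to convex families and preserve extrema. If $\mathcal C\subseteq\mathcal A$ is convex with greatest element $B$, then $\pi^{-1}[\mathcal C]$ is convex and has greatest element $c(B)$. Dually, for convex $\mathcal C'\subseteq\mathcal A'$ the family $c^{-1}[\mathcal C']=\{T\mid c(T)\in\mathcal C'\}$ is convex and contained in $\mathcal A$ (since $c(T)\in\mathcal A'$ forces $T=\pi[c(T)]\in\mathcal A$); moreover, because $c$ preserves intersections, $c(T_1\cap T_2)=c(T_1)\cap c(T_2)$ lies in the interval between the bottom of $\mathcal C'$ and $c(T_1)$, so $c^{-1}[\mathcal C']$ is intersection-closed and hence has a unique least element whenever $\mathcal C'$ does (and symmetrically for greatest elements, via union-preservation).

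For (i) I invoke \Cref{Zorn lemma} \cref{Zorn item def}: $\Dim$ equals the least number of members' convex shadows (convex families with a top) that cover the family. Pulling an optimal shadow-cover of $\mathcal A$ back by $\pi^{-1}[\cdot]$ yields convex-with-top subfamilies of $\mathcal A'$ that cover $\mathcal A'$ (as $\pi^{-1}[\cdot]$ commutes with unions), so $\Dim(\mathcal A')\le\Dim(\mathcal A)$; pushing an optimal shadow-cover of $\mathcal A'$ through $c^{-1}[\cdot]$ (discarding empty preimages) yields convex-with-top subfamilies of $\mathcal A$ covering $\mathcal A$, since every $T\in\mathcal A$ has $c(T)$ in some piece, so $\Dim(\mathcal A)\le\Dim(\mathcal A')$, and equality follows. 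For (ii) only the favourable inequality is needed: starting from optimal dual shadows $\partial^d_{G'_i}(\mathcal A')$ (convex-with-bottom) covering $\mathcal A'$, their $c^{-1}$-preimages are convex-with-bottom, sit inside $\mathcal A$, and cover $\mathcal A$; each is contained in $\partial^d_{G_i}(\mathcal A)$ for its least element $G_i\in\mathcal A$, so the $G_i$ form a supporting family and $\Dim^d(\mathcal A)\le\Dim^d(\mathcal A')\le 2^{n}\,\Dim^d(\mathcal A')$, which is the stated bound (indeed with the sharper factor $1$, uniformly covering the convex case $n=0$). In passing from covers to genuine (co)dominating families I use that a convex piece with top/bottom $E$ lies in the (dual) convex shadow of $E$, absorbing any coincidences among the pieces.

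The main obstacle is the dual-extremum bookkeeping in (ii). The tempting route—pulling $\mathcal A'$ back to $\mathcal A$ along the image map $\pi[\cdot]$—breaks down, because the preimage of a convex-with-bottom family acquires many incomparable minimal elements: one transversal for each way of choosing a representative from each of the $2^{d}$-element fibres over the bottom set $G$, i.e. up to $(2^{d})^{|G|}\le 2^{n}$ of them (using $|G|\le\max\{|T|\mid T\in\mathcal A\}$), and this is exactly where a crude argument would pay the factor $2^{n}$. Routing the comparison through the section $c$ instead avoids this, since intersection-preservation keeps a unique bottom. I would still verify the harmless corner cases: $\bar p$ may list variables outside $\FV(\phi)$ (immaterial by locality), and the preimage pieces may be empty or share an extremum (handled by discarding empties and passing to (dual) convex shadows via \Cref{Zorn lemma}).
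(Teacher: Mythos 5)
Your argument for item (i) is correct and is essentially the paper's own: the paper also transports dominating families along the cylindrification (writing $G[\{0,1\}/p_i]$ for your section $c(G)$ and proceeding one dummy variable at a time rather than in one step), and it likewise checks minimality of the transported family in both directions.

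For item (ii) there is a genuine mismatch, though not an error in your reasoning as such. What you prove --- correctly --- is $\Dim^d(\lVert\phi\rVert^{\bar p})\le\Dim^d(\lVert\phi\rVert^{\bar p'})$, which trivially implies the inequality as printed. But the paper's own proof establishes the \emph{reverse} inequality $\Dim^d(\lVert\phi\rVert^{\bar p'})\le 2^{n}\cdot\Dim^d(\lVert\phi\rVert^{\bar p})$: it takes a minimal supporting family $\mathcal G$ of $\lVert\phi\rVert^{\bar p}$ and replaces each $G\in\mathcal G$ by all of its sections $G[F/\bar p'']$ with $F\colon G\to\{\{a\}\mid a\in\{0,1\}^{k}\}$, of which there are at most $2^{k\cdot|G|}\le 2^{n}$, and shows that the resulting family supports $\lVert\phi\rVert^{\bar p'}$. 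That is the direction in which the factor $2^{n}$ is actually needed (adding dummy variables splinters each minimal set into up to $2^{n}$ incomparable transversals), and it is evidently the intended content; the statement appears to have $\bar p$ and $\bar p'$ transposed relative to its proof. Your own observation that the printed inequality holds with the sharper factor $1$ (uniformly, even without the convexity case split) should have been the warning sign: a bound carrying a superfluous exponential factor almost always means the intended inequality points the other way. Ironically, the ``crude argument paying the factor $2^{n}$'' that you describe and then deliberately route around --- one transversal per choice of representative in each fibre over a minimal set $G$, giving $(2^{d})^{|G|}\le 2^{n}$ of them --- is precisely the paper's proof of the intended claim. So: your (i) is fine; your (ii) proves a true and even stronger version of the literal statement, but to match what the paper actually proves you should add the transversal-expansion argument for $\Dim^d(\lVert\phi\rVert^{\bar p'})\le 2^{n}\cdot\Dim^d(\lVert\phi\rVert^{\bar p})$.
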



Before calculating the upper dimensions of propositional atoms, we define some families that will be useful for the calculations. These families correspond to sets of teams that each satisfy a particular atom. The calculations of their upper dimensions are based on the results of \cite{Hella_Luosto_Vaananen_2024} for dimensions in the setting of first-order team semantics.

\begin{definition}[\cite{Hella_Luosto_Vaananen_2024}]\label{sets}
Let $X$ and $Y$ be nonempty finite sets. We define the following families
\begin{align*}
    \Fdep&:=\{f\subseteq X\times Y\mid 
 f \text{ is a mapping}\},\\
 \Fano&:=\{R\subseteq X\times Y\mid \forall x\in\Dom(R), \exists y,y'\in Y\text{ s.t. } y\neq y'\text{ and } (x,y),(x,y')\in R\},\\
 \Finc&:=\{R\subseteq X\times X\mid \Dom(R)\subseteq\Ran(R)\},\\
    \Fexc&:=\{R\subseteq X\times X\mid \Dom(R)\cap\Ran(R)=\emptyset\},\\
    \Find&:=\{A\times B\mid 
 A\subseteq X, B\subseteq Y\}.
\end{align*}
\end{definition}

\begin{theorem}[\cite{Hella_Luosto_Vaananen_2024}]\label{dim_families}
Let $X$ and $Y$ be as in Definition \ref{sets}, and assume that $|X|=\ell\geq 2$ and $|Y|=n\geq 2$. Then the following claims hold for the upper dimensions of the families. 
\begin{align*}
   \Dim(\Fdep)=n^\ell, \quad&\Dim(\Fano)=2^\ell,\quad\Dim(\Finc)=2^\ell-\ell,\\\quad\Dim(\Fexc)=2^\ell-2,\quad\text{and}\quad
   &\Dim(\Find)=(2^\ell-\ell-1)(2^n-n-1)+\ell+n.
\end{align*}
\end{theorem}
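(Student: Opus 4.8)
The plan is to compute each upper dimension separately, in each case reducing to a combinatorial count of maximal sets (or critical sets) in the relevant family, using the structural machinery developed in \Cref{section 2}. The key observation is that each of the five families has a closure type that lets us apply \Cref{dc and uc prop}: $\Fdep$ and $\Fexc$ are downward closed (a subset of a mapping need not be a mapping, so care is needed here---$\Fdep$ is actually the family of \emph{partial} mappings and is genuinely downward closed; likewise any subset of an $R$ with $\Dom(R)\cap\Ran(R)=\emptyset$ still has disjoint domain and range), whereas $\Fano$ and $\Finc$ are union closed but not downward closed, and $\Find$ is neither. So the first step is to verify the closure type of each family, and then, for the downward closed cases, invoke $\Dim(\mathcal{A})=|\Max(\mathcal{A})|$.

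For $\Fdep$, the maximal partial mappings $X\to Y$ are exactly the total functions $f\colon X\to Y$, of which there are $n^\ell$; this gives $\Dim(\Fdep)=n^\ell$ immediately. For $\Fexc$, a maximal relation $R\subseteq X\times X$ with $\Dom(R)\cap\Ran(R)=\emptyset$ is determined by choosing a subset $D\subseteq X$ to be the domain and setting $R=D\times(X\setminus D)$; the count is $2^\ell$ minus the degenerate cases where $D=\emptyset$ or $D=X$ force $R=\emptyset$ to be counted once rather than twice, which is how the $2^\ell-2$ arises. The anonymity and inclusion families are union closed rather than downward closed, so I would not use $\Max$ directly on them; instead I would compute $\Dim$ via critical sets or by the dominating-subfamily definition. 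For $\Fano$, each maximal relation corresponds to selecting a domain subset $A\subseteq X$ and pairing every $x\in A$ with \emph{all} of $Y$, giving $2^\ell$ choices. For $\Finc$, the constraint $\Dom(R)\subseteq\Ran(R)$ means a maximal witnessing relation is indexed by the possible ranges; the correction term $-\ell$ reflects that singleton ranges cannot support a nonempty domain in the required way, and I would count carefully which subsets of $X$ can serve as the range of a dominating relation.

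For $\Find$, the family of rectangles $A\times B$, the computation is the genuinely hard part and the main obstacle. This family is neither downward nor upward closed, so none of the clean propositions apply directly, and the formula $(2^\ell-\ell-1)(2^n-n-1)+\ell+n$ already signals a two-dimensional product structure with lower-order corrections. My plan here is to identify the critical sets among rectangles and count them: a rectangle $A\times B$ is critical essentially when both $A$ and $B$ are ``large enough'' that collapsing either factor is not forced, and the terms $-\ell-1$ and $-\ell$, $-n$ in the correction should correspond to excluding the empty set, the singletons, and the full sets in each coordinate, whose rectangles get absorbed into the convex shadow of a larger critical rectangle. I expect the delicate accounting to be in showing that the convex shadow of a given rectangle in $\Find$ consists of the subrectangles contained in it and that two rectangles have incomparable shadows exactly when neither factor contains the other; translating this incomparability condition into the product count, and isolating precisely which degenerate rectangles must be added back (the $+\ell+n$ term, presumably the singleton-row and singleton-column rectangles that are critical in isolation), is where the argument requires the most care. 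Since this theorem is quoted from \cite{Hella_Luosto_Väänänen_2024}, I would ultimately cite that computation for $\Find$ rather than reprove it, but the sketch above is how I would reconstruct it.
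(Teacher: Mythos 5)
Your overall strategy (downward closure plus $\Dim(\mathcal{A})=|\Max(\mathcal{A})|$ for $\Fdep$ and $\Fexc$, critical sets for the remaining families) is the same as the paper's, and your treatments of $\Fdep$ and $\Fano$ are fine; for $\Fexc$ the answer is right but the justification of the $-2$ is muddled --- since $\ell\geq 2$ the empty relation is simply not maximal, so the two degenerate choices $D=\emptyset$ and $D=X$ are excluded outright rather than ``counted once instead of twice.'' The first genuine gap is in $\Finc$: your explanation of the $-\ell$ is wrong, since a singleton range certainly can support a nonempty domain ($\{(b,b)\}\in\Finc$ has domain and range $\{b\}$). The paper identifies $\Crit(\Finc)=\{R_A\mid A\subseteq X\}$ with $R_A=(A\times X)\cup\mathrm{id}_X$, which has $2^\ell$ members, and then shows separately that the $\ell$ shadows $\partial_{R_{\{a\}}}(\Finc)$ are redundant (each $T$ in such a shadow lies in $\partial_{R_B}(\Finc)$ for $B=\{b\mid(a,b)\in T\}$, or in $\partial_{R_\emptyset}(\Finc)$ when $T\subseteq\mathrm{id}_X$), while no other shadow can be dropped. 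So ``computing via critical sets'' naively yields $2^\ell$, not $2^\ell-\ell$: this is exactly a case where the minimal dominating family is a proper subset of $\Crit$, and your sketch is missing the redundancy argument.

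The second gap is in $\Find$: your key structural claim --- that the convex shadow of a rectangle consists of all subrectangles contained in it --- is false and would derail the count. If $|A|,|B|\geq 2$ and $C\subsetneq A\times B$, the interval $[C,A\times B]$ contains non-rectangles, so in fact $\partial_{A\times B}(\Find)=\{A\times B\}$ is a singleton; only when one side has size at most $1$ is every set in the relevant intervals again a rectangle, giving $\partial_{A\times B}(\Find)=\mathcal{P}(A\times B)$. (Under your description the shadow of $X\times Y$ would already be all of $\Find$ and the dimension would collapse to $1$.) The singleton-shadow phenomenon is precisely why all $(2^\ell-\ell-1)(2^n-n-1)$ rectangles with both sides of size $\geq 2$ must belong to every dominating family, while the degenerate rectangles are absorbed by the $\ell+n$ shadows of $\{a\}\times Y$ and $X\times\{b\}$ --- so your guess about the origin of the $+\ell+n$ term is correct, but for the wrong reason.
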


\begin{proof}
\begin{itemize}
    \item  We show that $\Dim(\Fdep)=n^\ell$. First note that  $\Fdep$ is downward closed, 
    so by \Cref{dc and uc prop}, $\Dim(\Fdep)=|\Max(\Fdep)|$. The maximal sets in $\Fdep$ are total functions $f\colon X\to Y$, so $\Dim(\Fdep)=|Y|^{|X|}=n^\ell$. 

    \item Next, we show that $\Crit(\Fano)=\{A\times Y\mid A\subseteq X\}$ is the smallest subfamily dominating $\Fano$, so $\Dim(\Fano)=|\{A\times Y\mid A\subseteq X\}|=2^\ell$.
        
        Let $R\subseteq R'\subseteq X\times Y$. Then by the definition of $\Fano$, $[R,R']\subseteq\Fano$ if and only if $R,R'\in\Fano$ and $\Dom(R)=\Dom(R')$. 
         Since $\partial_{R'}(\Fano)=\{S\subseteq R'\mid [S,R']\subseteq\Fano\}$, this means that for any $R,R'\in\Fano$, $R\in \partial_{R'}(\Fano)$ if and only if $R\subseteq R'$ and $\Dom(R)=\Dom(R')$. Since $R\in \partial_{R'}(\Fano)$ if and only if $\partial_R(\Fano)\subseteq\partial_{R'}(\Fano)$, the critical sets of $\Fano$ are of the form $A\times Y$ for some $A\subseteq X$, so $\Crit(\Fano)=\{A\times Y\mid A\subseteq X\}$.

          We now show that $\Crit(\Fano)$ is the smallest dominating subfamily of $\Fano$ Let $\mathcal{G}\subseteq\Fano$ be such that it dominates $\Fano$. 
Since $\Crit(\Fano)\subseteq\Fano=\bigcup_{G\in\mathcal{G}}\mathcal{D}_{G}$ for some convex families $\mathcal{D}_{G},G\in\mathcal{G}$, we have that for every $A\in\Crit(\Fano)$ there is some $G\in\mathcal{G}$ such that $A\in\mathcal{D}_{G}$. Since $\mathcal{D}_G$ is convex, $[A,G]\subseteq\Fano$, and hence $A\in\partial_G(\Fano)$.
Then 
$\partial_A(\Fano)\subseteq \partial_G(\Fano)$ and since $A$ is critical, 
we have $A=G$. Thus $A\in\mathcal{G}$, and $\Crit(\mathcal{A})\subseteq\mathcal{G}$.

\item The remaining calculations are in \Cref{app1}.
\end{itemize}

\end{proof}
 
For dimensions in the context of propositional team semantics, the sets $X$ and $Y$ of the above theorem correspond to projections of propositional teams to the relevant variables, i.e., they contain tuples of zeroes and ones of suitable length. For example, if $X$ is the set of all $k$-length tuples of zeroes and ones, we have $|X|=2^k$. Now we obtain the upper dimensions of the propositional atoms as an easy corollary, with the case for unary dependence atoms illustrated in \Cref{figure lattice}.
\begin{corollary}
Let $\bar{p},\bar{q},q,\bar{r}$ be sequences of mutually distinct propositional variables with $|\bar{p}|=|\bar{q}|=k$ and $|\bar{r}|=m$. Then the following claims hold.
     \begin{enumerate}[label=(\roman*)]
        \item $\Dim(\lVert\phi\rVert^{\bar{p}})=1$ for any $\phi\in \QPL$,
        \item $\Dim(\lVert\dep(\bar{p};q)\rVert^{\bar{p}q})=2^{2^k}$,
        \item $\Dim(\lVert \bar{p}\Upsilon q\rVert^{\bar{p}q})=2^{2^k}$,
        \item $\Dim(\lVert \bar{p}\subseteq \bar{q}\rVert^{\bar{p}\bar{q}})=2^{2^k}-2^k$,
        \item $\Dim(\lVert \bar{p}\mid  \bar{q}\rVert^{\bar{p}\bar{q}})=2^{2^k}-2$,
        \item  $\Dim(\lVert \bar{q}\perp  \bar{r}\rVert^{\bar{q}\bar{r}})=(2^{2^k}-2^k-1)(2^{2^m}-2^m-1)+2^k+2^m$.
\end{enumerate}
\end{corollary}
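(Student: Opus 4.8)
The plan is to reduce each case to \Cref{dim_families} by exhibiting, for each atom, a lattice isomorphism between the relevant team property and one of the families $\Fdep,\Fano,\Finc,\Fexc,\Find$ of \Cref{sets}, and then read off the dimension by substituting the correct values of $\ell$ and $n$. The crucial observation is that the upper dimension is an \emph{order invariant}: it is defined purely in terms of the inclusion order (convex families, shadows, critical sets), so any bijection $P(\mathcal{X})\to P(\mathcal{X}')$ preserving $\subseteq$ also preserves $\Dim$. Thus it suffices to realise each team property as the image of an abstract family under such an isomorphism.

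First I would set up the isomorphism in one representative case and then transfer it. For $\dep(\bar{p};q)$ with $|\bar{p}|=k$, put $X=\{0,1\}^k$ and $Y=\{0,1\}$, so $|X|=2^k$ and $|Y|=2$. Since $\bar{p}q$ consists of mutually distinct variables spanning the whole domain, the map $s\mapsto(s(\bar{p}),s(q))$ is a bijection $\Eval^{\bar{p}q}\to X\times Y$, and the induced map $T\mapsto\{(s(\bar{p}),s(q))\mid s\in T\}$ is an order isomorphism $P(\Eval^{\bar{p}q})\to P(X\times Y)$. Reading the semantic clause, $T\models\dep(\bar{p};q)$ holds exactly when the associated relation is a mapping, so this isomorphism carries $\lVert\dep(\bar{p};q)\rVert^{\bar{p}q}$ onto $\Fdep$; hence $\Dim(\lVert\dep(\bar{p};q)\rVert^{\bar{p}q})=\Dim(\Fdep)=n^\ell=2^{2^k}$. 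The remaining cases are identical in form: for $\bar{p}\Upsilon q$ the same $X,Y$ give $\Fano$, with $\Dim=2^\ell=2^{2^k}$; for $\bar{p}\subseteq\bar{q}$ and $\bar{p}\mid\bar{q}$ one takes $X=Y=\{0,1\}^k$ and the relation $\{(s(\bar{p}),s(\bar{q}))\mid s\in T\}\subseteq X\times X$, landing in $\Finc$ and $\Fexc$ respectively, with dimensions $2^\ell-\ell=2^{2^k}-2^k$ and $2^\ell-2=2^{2^k}-2$; and for $\bar{q}\perp\bar{r}$ one takes $X=\{0,1\}^k$, $Y=\{0,1\}^m$, so that the value set $\{(s(\bar{q}),s(\bar{r}))\mid s\in T\}$ satisfies the atom iff it is a rectangle $A\times B$, placing it in $\Find$ with $\ell=2^k$, $n=2^m$ and yielding the stated product formula. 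Item (i) is handled separately: every $\phi\in\QPL$ is flat, so $\lVert\phi\rVert^{\bar{p}}$ equals $P(M)$ for $M=\{s\in\Eval^{\bar{p}}\mid s\models\phi\}$, a downward closed family with the single maximal element $M$, whence $\Dim=|\Max|=1$ by \Cref{dc and uc prop}.

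The only genuine work is verifying that each semantic clause translates exactly into membership in the corresponding abstract family of \Cref{sets} — a routine but necessary comparison of the atom definitions against $\Fdep,\Fano,\Finc,\Fexc,\Find$ — and keeping the parameters straight, since $X$ and $Y$ are \emph{value} spaces whose sizes $2^k,2^m,2$ are what get substituted for $\ell,n$, rather than $k$ or $m$ themselves. The one point requiring care is the hypothesis $\ell,n\geq2$ of \Cref{dim_families}, which forces $k\geq1$ (and additionally $m\geq1$ in item (vi)); the degenerate cases such as the constancy atom $\dep(q)$ (where $k=0$) fall outside the theorem and, if desired, can be checked directly against \Cref{dc and uc prop}. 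I expect this boundary bookkeeping, rather than any conceptual difficulty, to be the main thing to watch.
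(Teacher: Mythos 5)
Your proposal is correct and takes essentially the same route as the paper: identify each team property with the corresponding abstract family of \Cref{sets} via the value map $s\mapsto(s(\bar{p}),s(\bar{q}))$, substitute $\ell=2^k$, $n=2^m$ (or $2$) into \Cref{dim_families}, and handle item (i) separately via downward closure and \Cref{dc and uc prop}. Your explicit remarks that the upper dimension is an order invariant under the induced powerset bijection, and that the hypothesis $\ell,n\geq 2$ excludes the $k=0$ case, are details the paper leaves implicit but do not constitute a different argument.
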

\begin{proof}
    \begin{itemize}
        \item[(i)] Since any formula $\phi\in\QPL$ is downward closed, 
        by \Cref{dc and uc prop}, we have $\Dim(\lVert\phi\rVert^{\bar{p}})=|\Max(\lVert\phi\rVert^{\bar{p}})|=|\Max([\emptyset,\phi^{\bar{p}}])|=|\{\phi^{\bar{p}}\}|=1$, where $\phi^{\bar{p}}:=\{s\in\Eval^{\bar{p}}\mid s\models\phi\}$.
       \item[(ii)-(v)] Note that by letting $X=\{0,1\}^k$ and $Y=\{0,1\}$, we have $|X|=2^k$ and $|Y|=2$. Then by Theorem \ref{dim_families}, we obtain $\Dim(\lVert\dep(\bar{p};q)\rVert^{\bar{p}q})=\Dim(\Fdep)=|Y|^{|X|}=2^{2^k}$. The other cases are analogous. 
        \item[(vi)] By letting $X=\{0,1\}^k$ and $Y=\{0,1\}^m$, we obtain the claim by Theorem \ref{dim_families} as in the previous items.
    \end{itemize}
\end{proof}

    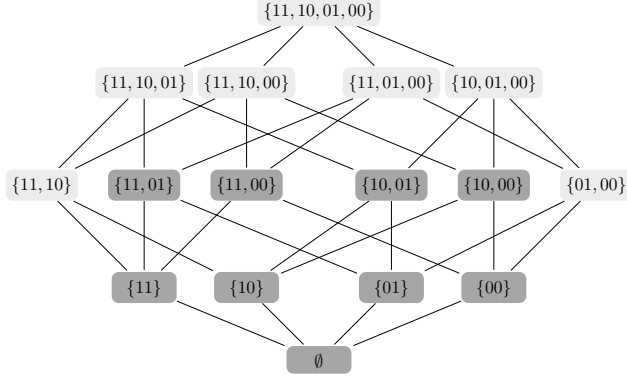
\begin{figure}\centering
\resizebox{.7\columnwidth}{!}{
       \begin{tikzpicture}[> = stealth,  shorten > = 1pt,   auto,  node distance = 2.05cm , state/.style={
 rectangle, rounded corners,
   fill=gray!15,
  minimum width=1.3cm, 
  align=center,      
  text height=.3cm  
}]

       \node[state] (x) {$\{11,10,01,00\}$};
        \node[state] [below left of=x] (B) {$\{ 11,10,00 \}$};
        \node[state][below right of=x]  (C) {$\{ 11,01,00 \}$};
        \node[state] [left  of=B] (A) {$\{ 11,10,01 \}$};
        \node[state][right of=C]  (D) {$\{ 10,01,00 \}$};
        \node[state, fill=gray!70][below of=B]  (AD) {$\{ 11,00 \}$};
        \node[state, fill=gray!70][below of=C]  (BC) {$\{ 10,01 \}$};
        \node[state, fill=gray!70][below of=D]  (BD) {$\{ 10,00 \}$};
        \node[state, fill=gray!70][below of=A]  (AC) {$\{ 11,01 \}$};
        \node[state]  [left  of=AC] (AB){$\{ 11,10 \}$};
        \node[state]  [right of=BD] (CD){$\{ 01,00 \}$};

        \node[state, fill=gray!70][below of=AD]  (ABD) {$\{ 10 \}$};
        \node[state, fill=gray!70][below of=BC]  (ACD) {$\{ 01 \}$};
        \node[state, fill=gray!70][left of=ABD]  (ABC) {$\{ 11 \}$};
        \node[state, fill=gray!70][right of=ACD]  (BCD) {$\{ 00\}$};

\node[state, fill=gray!70][below right of=ABD]  (ABCD) {$\emptyset$};

		\path[-] (x)  edge node {} (A);
		\path[-] (x)  edge node {} (B);
		\path[-] (x)  edge node {} (C);
		\path[-] (x)  edge node {} (D);

		\path[-] (A)  edge node {} (AB);
		\path[-] (A)  edge node {} (AC);
		\path[-] (A)  edge node {} (BC);
		\path[-] (B)  edge node {} (AB);
		\path[-] (B)  edge node {} (AD);
		\path[-] (B)  edge node {} (BD);
		\path[-] (C)  edge node {} (AC);%
		\path[-] (C)  edge node {} (AD);
		\path[-] (C)  edge node {} (CD);
		\path[-] (D)  edge node {} (BC);
		\path[-] (D)  edge node {} (BD);
		\path[-] (D)  edge node {} (CD);

		\path[-] (AB)  edge node {} (ABC);
		\path[-] (AB)  edge node {} (ABD);
		\path[-] (AC)  edge node {} (ABC);
		\path[-] (AC)  edge node {} (ACD);
		\path[-] (BC)  edge node {} (ABD);
		\path[-] (BC)  edge node {} (ACD);
		\path[-] (AD)  edge node {} (ABC);
		\path[-] (AD)  edge node {} (BCD);
		\path[-] (BD)  edge node {} (ABD);
		\path[-] (BD)  edge node {} (BCD);
		\path[-] (CD)  edge node {} (ACD);
		\path[-] (CD)  edge node {} (BCD);
		\path[-] (ABC)  edge node {} (ABCD);
		\path[-] (ABD)  edge node {} (ABCD);
		\path[-] (ACD)  edge node {} (ABCD);
		\path[-] (BCD)  edge node {} (ABCD);
      \end{tikzpicture}
}
\caption{The dark gray teams in the picture belong to $\lVert\dep(p_1;q)\rVert^{p_1q}$, where $10(p_1)=1$ and $10(q)=0$ etc. There are four maximal teams, hence $\Dim(\lVert\dep(p_1;q)\rVert^{p_1q})=4$. \label{figure lattice}}
\end{figure}

\section{Translations}
\label{section 4}


We study the translation of different atoms into equivalent formulas of a simpler form, meaning that all atoms have a reduced arity. We call these \emph{reduction formulas} since they allow us to reduce the arity of the atoms in a formula by replacing them with equivalent expressions. For dependence atoms and anonymity atoms, the translations are succinct, with the number of atoms only increasing by a factor of $2$. 
%
The independence atom also has a simple reduction formula.

For exclusion and inclusion atoms, the task is more involved. We examine two approaches to obtain reduction formulas, one with quantifiers and one by introducing relativized variants of inclusion and exclusion atoms. The former approach takes inspiration from translations in the first-order team-based setting, while the latter is novel.  We end the section by showing that $\QPL(\subseteq)$ and $\PL$ extended with relativized inclusion atoms are expressively complete for all union closed team properties with the empty team, and that 
$\QPL(\mid)$ and $\PL$ extended with relativized exclusion atoms are expressively complete for all nonempty downward closed team properties. We also discuss that the respective expressive completeness results cannot be obtained for $\PL$ extended with the usual propositional inclusion and exclusion atoms.

\subsection{Dependence, anonymity, and independence atoms}







Propositional dependence, anonymity, and conditional independence atoms all have simple reduction formulas. In the case of dependence and anonymity, one application of the reduction formulas reduces the maximal atom arity by one. Also conditional independence atoms with arity $(1,m,k)$ can be reduced to non-conditional independence atoms $(0,m,k)$ with one application of the translation, and the final translation formula reduces the arity to $(0,m-1,k)$ or $(0,m,k-1)$ with one application, remembering that the atom is symmetrical.

\begin{proposition}\label[proposition]{reductionfmlas}
    The following equivalences hold.
     \begin{enumerate}[label=(\roman*)]
        \item $\dep(p_1\dots p_{k+1};q)\equiv (p_{k+1}\wedge \dep(p_1\dots p_{k}; q))\lor(\neg p_{k+1}\wedge \dep(p_1\dots p_{k}; q))$. \label{reductionfmlas item dep}
        \item $p_1\dots p_{k+1}\Upsilon q\equiv ((p_{k+1}\wedge p_1\dots p_{k}\Upsilon q)\lor(\neg p_{k+1}\wedge p_1\dots p_{k}\Upsilon q))$. \label{reductionfmlas item anon}
        \item $p_1\dots p_k\perp_{p_{k+1}} q_1\dots q_n\equiv (p_{k+1}\wedge p_1\dots p_k\perp q_1\dots q_n)\lor(\neg p_{k+1}\wedge p_1\dots p_k\perp q_1\dots q_n)$. \label{reductionfmlas item cond ind}
      \item $p_1\dots p_{k+1}\perp q_1\dots q_n\equiv p_{k+1}\perp q_1\dots q_n\wedge((p_{k+1}\wedge p_1\dots p_{k}\perp q_1\dots q_n)\lor(\neg p_{k+1}\wedge p_1\dots p_{k}\perp q_1\dots q_n))$.\footnote{The reduction formulas in items \ref{reductionfmlas item cond ind} and \ref{reductionfmlas item ind} for independence are joint work with Fan Yang and included here with her permission.} \label{reductionfmlas item ind}
     \end{enumerate}
\end{proposition}
\begin{proof}
    Items \ref{reductionfmlas item dep}-\ref{reductionfmlas item cond ind} are easy to verify. We prove \cref{reductionfmlas item ind}, defining $\bar{p}=p_1\dots p_k$ and $\bar{q}=q_1\dots q_n$. Let $T\models \bar{p}p_{k+1}\perp \bar{q}$, then  $T\models p_{k+1}\perp \bar{q}$ follows. 
    Let $S_0,S_1\subseteq T$ be such that $S_0\models \neg p_{k+1}$, $S_1\models  p_{k+1}$ and $S_0\cup S_1=T$. Now for any $v,v'\in S_0$, $v(p_{k+1})=0$, hence also the witness $v''\in T$ for which $v''(\bar{p}p_{k+1})=v(\bar{p}p_{k+1})$ and $v''(\bar{q})=v'(\bar{q})$ is in $S_0$. Hence $S_0\models \bar{p}p_{k+1}\perp \bar{q}$, and thus also $S_0\models \bar{p}\perp \bar{q}$. The case for $S_1$ is similar, and we conclude $T\models p_{k+1}\perp \bar{q}\wedge((p_{k+1}\wedge \bar{p}\perp \bar{q})\lor(\neg p_{k+1}\wedge \bar{p}\perp \bar{q}))$. 

    For the other direction, let $T\models p_{k+1}\perp \bar{q}\wedge((p_{k+1}\wedge \bar{p}\perp \bar{q})\lor(\neg p_{k+1}\wedge \bar{p}\perp \bar{q}))$. Now $T\models p_{k+1}\perp \bar{q}$ and there are $S_0,S_1\subseteq T$ such that $S_0\models \neg p_{k+1}\land \bar{p}\perp \bar{q}$, $S_1\models p_{k+1}\land \bar{p}\perp \bar{q}$ and $S_0\cup S_1=T$. Let $v,v'\in T$. By $T\models p_{k+1}\perp \bar{q}$, there is a $v''\in T$ such that $v''(p_{k+1})=v(p_{k+1})$ and $v''(\bar{q})=v'(\bar{q})$. If $v(p_{k+1})=0$, then $v''\in S_0$. Since $p_{k+1}$ has a constant value in $S_0$, $S_0\models \bar{p}p_{k+1}\perp \bar{q}$ and we find $v_0\in S_0$ such that $v_0(\bar{p}p_{k+1})=v''(\bar{p}p_{k+1})=v(\bar{p}p_{k+1})$ and $v_0(\bar{q})=v''(\bar{q})=v'(\bar{q})$.  
The case when $v(p_{k+1})=0$ is symmetrical using the other disjunct and the subteam $S_1$. Hence $T\models \bar{p}p_{k+1}\perp \bar{q}$.
\end{proof}

It is known in the literature \cite{MR3488885} that propositional dependence logic $\PL(\dep(\dots))$ is expressively equivalent to propositional constancy logic $\PL(\dep(\dots)_0)$, and we note that the reduction formula in \Cref{reductionfmlas} \cref{reductionfmlas item dep} when iterated results in a formula with only constancy atoms (equivalently, dependence atoms of arity $0$). Similarly, the reduction formula in \cref{reductionfmlas item anon} allows us to reduce the arity of anonymity atoms to zero, resulting in non-constancy atoms ($\not\dep(\cdot)$). In \cite{yang2022}, the logics are shown to be expressively equivalent, i.e., $\PL(\Upsilon)\equiv \PL(\Upsilon_0)\equiv \PL(\not\dep(\cdot))$. 

\subsection{Quantifiers with inclusion and exclusion atoms}
\label{subsec inc exc}


In this section, we use translation results from the setting of first-order team semantics and the reduction formulas of Theorem \ref{reductionfmlas} to obtain reduction formulas for propositional inclusion and exclusion atoms. These translations use propositional quantifiers, unlike the reduction formulas of Theorem \ref{reductionfmlas}. 


In the setting of first-order team semantics, it is known that inclusion atoms can be expressed using the anonymity atoms \cite{ronnholm18}.
Because quantified propositional team semantics can be seen as a restriction of first-order team semantics to two-element models with one unary relation, propositional inclusion atoms can also be expressed using propositional anonymity atoms by an analogous formula. To utilize the first-order translation that uses identities between variables, we define 
the notations $p_i=p_j$ and $p_i\neq p_j$ 
as the abbreviations of the formulas $(p_i\wedge p_j)\lor(\neg p_i\wedge \neg p_j)$ and $(p_i\wedge \neg p_j)\lor(\neg p_i\wedge  p_j)$, respectively. We will also use analogous shorthand notations for tuples of propositional variables, i.e., we write $\bar{p}=\bar{p}'$ for $\bigwedge_{1\leq i\leq k} p_i=p'_i$ and $\bar{p}\neq\bar{p}'$ for $\bigvee_{1\leq i\leq k}p_i\neq p'_i$. Now we can use the translation from \cite{ronnholm18} to express inclusion atoms with anonymity atoms as in \Cref{eq1}. Similarly, the anonymity atom can be expressed using the inclusion atom in the first-order team semantics \cite{galliani12}, and an analogous formula can also be used in the quantified propositional setting shown in \Cref{eq2}.
%
%
    %
\begin{align} \label{eq1}
    \bar{t}\subseteq\bar{t}'\equiv &\forall z_1\forall z_2\exists\bar{p}\exists q(((z_1=z_2\wedge p_1=q\wedge\bar{p}=\bar{t})\\
    &\lor(z_1\neq z_2\wedge\bar{p}=\bar{t}'))\wedge\bar{p}\Upsilon q),
    \notag\\
   \bar{p}\Upsilon q\equiv &\exists u(u\neq q\wedge\bar{p}u\subseteq\bar{p}q).\label{eq2}
\end{align}
We use these translations to obtain reduction formulas for inclusion atoms in $\QPL(\subseteq)$.

\begin{proposition}\label[proposition]{theoremk+1tok4}
    Every $k+1$-ary propositional inclusion atom has an equivalent formula  $\phi\in \QPL(\subseteq_k)_4$.
\end{proposition}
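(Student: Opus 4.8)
The plan is to chain three results already in hand: the inclusion-to-anonymity translation \eqref{eq1}, the anonymity reduction formula \Cref{reductionfmlas} \cref{reductionfmlas item anon}, and the anonymity-to-inclusion translation \eqref{eq2}. The point to watch throughout is the arity bookkeeping. Translation \eqref{eq1} sends the $(k+1)$-ary atom $\bar t\subseteq\bar t'$ to a formula whose sole atom is a $(k+1)$-ary anonymity atom $\bar p\Upsilon q$ (since $|\bar p|=|\bar t|=k+1$), while \eqref{eq2} \emph{raises} arity, turning an $m$-ary anonymity atom into an $(m{+}1)$-ary inclusion atom. Composing \eqref{eq1} and \eqref{eq2} directly would therefore return us to arity $k+1$; the key idea is to push the anonymity arity down by two before translating back, so that \eqref{eq2} lands at arity $k$.

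Concretely, I would first rewrite $\bar t\subseteq\bar t'$ using \eqref{eq1}, leaving a single $(k+1)$-ary anonymity atom under the prefix $\forall z_1\forall z_2\exists\bar p\exists q$ (the abbreviations $=,\neq$ are purely propositional and contribute no atoms). I would then apply \cref{reductionfmlas item anon} twice to this atom: the first application rewrites $p_1\dots p_{k+1}\Upsilon q$ into a formula containing two $k$-ary atoms $p_1\dots p_k\Upsilon q$, and the second rewrites each of those into two $(k{-}1)$-ary atoms $p_1\dots p_{k-1}\Upsilon q$, giving four in total. Rewriting a subformula under the quantifiers is legitimate because $\equiv$ is a substitutive equivalence, which follows from the locality proposition.

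Finally I would apply \eqref{eq2} to each of the four $(k{-}1)$-ary anonymity atoms, replacing $p_1\dots p_{k-1}\Upsilon q$ by $\exists u(u\neq q\wedge p_1\dots p_{k-1}u\subseteq p_1\dots p_{k-1}q)$, whose inclusion atom has arity exactly $(k{-}1)+1=k$ (taking a fresh $u$ in each disjunct). The resulting formula lies in $\QPL$, its only inclusion atoms are these four of arity $k$, and its free variables still coincide with those of $\bar t\subseteq\bar t'$; hence it witnesses $\bar t\subseteq\bar t'\in\QPL(\subseteq_k)_4$. The count $4=2^2$ is forced: because \eqref{eq2} costs one unit of arity, two halvings via \cref{reductionfmlas item anon} are needed, and each halving doubles the number of atoms.

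I expect the real work to be bookkeeping rather than any new idea: tracking fresh variables for $z_1,z_2,\bar p,q,u$, checking that $\FV$ is preserved, and confirming the boundary case $k=1$, where the twice-reduced atoms are the $0$-ary (non-constancy) atoms $\Upsilon q$ and \eqref{eq2} produces exactly the unary inclusion atoms $\exists u(u\neq q\wedge u\subseteq q)$.
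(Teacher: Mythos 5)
Your proposal is correct and follows essentially the same route as the paper: translate the $(k+1)$-ary inclusion atom to a $(k+1)$-ary anonymity atom via \eqref{eq1}, apply the anonymity reduction of \Cref{reductionfmlas}\,\cref{reductionfmlas item anon} twice to obtain four $(k-1)$-ary anonymity atoms, and translate each back to a $k$-ary inclusion atom via \eqref{eq2}. Your explicit remarks on why the count $4$ is forced by the arity cost of \eqref{eq2} and on the boundary case $k=1$ are sound additions to the same argument.
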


\begin{proof}
The propositional inclusion atom $t_1\dots t_{k+1}\subseteq t'_1\dots t'_{k+1}$ has a reduction formula that can be obtained as follows. First, express the atom with a propositional anonymity formula using \Cref{eq1}.
Then use the reduction formula $p_1\dots p_{k+1}\Upsilon q\equiv ((p_{k+1}\wedge p_1\dots p_{k}\Upsilon q)\lor(\neg p_{k+1}\wedge p_1\dots p_{k}\Upsilon q))$
 to the anonymity atoms twice, to first obtain
\begin{align*} 
    \forall z_1\forall z_2\exists\bar{p}\exists q(((z_1=z_2&\wedge p_1=q\wedge\bar{p}=\bar{t}\lor(z_1\neq z_2\wedge\bar{p}=\bar{t}'))\wedge (\theta_k^+\lor \theta_k^-)
\end{align*}
and then
\begin{align*} 
    &\forall z_1\forall z_2\exists\bar{p}\exists q(((z_1=z_2\wedge p_1=q\wedge\bar{p}=\bar{t})\lor(z_1\neq z_2\wedge\bar{p}=\bar{t}'))\\
    &\wedge ((p_{k+1}\wedge (\theta_{k-1}^+\lor\theta_{k-1}^-)\lor(\neg p_{k+1}\wedge (\theta_{k-1}^+\lor\theta_{k-1}^-)),
\end{align*}
where $\theta_{j}^+:=(p_{j+1}\wedge p_1\dots p_{j}\Upsilon q)$ and $\theta_{j}^-:=(\neg p_{j}\wedge p_1\dots p_{j}\Upsilon q)$.

Then express the anonymity atoms with inclusion again
\begin{align*} 
    &\forall z_1\forall z_2\exists\bar{p}\exists q(((z_1=z_2\wedge p_1=q\wedge\bar{p}=\bar{t})\lor(z_1\neq z_2\wedge\bar{p}=\bar{t}'))\\
    &\wedge ((p_{k+1}\wedge ((p_k\wedge\chi_k)\lor(\neg p_{k}\wedge \chi_k))\lor(\neg p_{k+1}\wedge ((p_k\wedge \chi_k)\lor(\neg p_{k}\wedge \chi_k)),
\end{align*}
where $\chi_k:=\exists u(u\neq q\wedge p_1\dots p_{k-1}u\subseteq p_1\dots p_{k-1}q)$. The formula is clearly in $\QPL(\subseteq_k)_4$ as wanted.
\end{proof}


Using quantifiers, we can also obtain propositional versions of the translations between first-order dependence atoms and exclusion atoms from \cite{GALLIANI201268}.
%
    %
\begin{align} \label{eq3}
    \bar{t}\mid \bar{t}'\equiv \,\, & \forall \bar{p}\exists q(\dep(\bar{p};q)\wedge((q\wedge \bar{p}\neq \bar{t})\lor(\neg q\wedge \bar{p}\neq \bar{t}'))),\\
    \dep(\bar{p};q)\equiv\,\, &\forall z(z=q\lor \bar{p}z\mid \bar{p}q).\label{eq4}
\end{align}
The translation of \Cref{eq3} is slightly simpler than the analogous formula in the first-order setting. Note that the simplified formula uses the propositional atoms $q$ and $\neg q$, hence it does not immediately provide a similar simplification in the first-order setting. Next we show that this simpler formula is indeed equivalent to a propositional exclusion atom.
\begin{proposition}
    \[
    \bar{t}\mid \bar{t}'\equiv \,\,  \forall \bar{p}\exists q(\dep(\bar{p};q)\wedge((q\wedge \bar{p}\neq \bar{t})\lor(\neg q\wedge \bar{p}\neq \bar{t}')))
    \]
\end{proposition}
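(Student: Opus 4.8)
The plan is to prove both directions of the equivalence by unwinding the team-semantic clauses for the universal propositional quantifier, the dependence atom, the disjunction, and the propositional atoms $q,\neg q$. Throughout I would write $T$ for an arbitrary team over a variable set containing $\Var(\bar t)\cup\Var(\bar t')$ and abbreviate $\bar p=p_1\dots p_k$. The key observation to set up first is the meaning of the right-hand side: by the clause for $\forall\bar p$, we replace $T$ by the team $T':=T[\{0,1\}/\bar p]$ in which $\bar p$ ranges over \emph{all} $2^k$ value combinations (iterating the single-variable clause over $\bar p$), and by the clause for $\exists q$ we must supply a function $F\colon T'\to\{\{0\},\{1\},\{0,1\}\}$ so that $T'':=T'[F/q]$ satisfies $\dep(\bar p;q)\wedge((q\wedge\bar p\neq\bar t)\lor(\neg q\wedge\bar p\neq\bar t'))$. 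The role of $\dep(\bar p;q)$ together with the fact that $\bar p$ already takes every value in $T'$ is to force $F$ to be \emph{single-valued} on each $\bar p$-value, so $F$ effectively picks one truth value $q=g(\bar a)$ for each tuple $\bar a\in\{0,1\}^k$; the role of the disjunction is that $T''$ splits into the $q=1$ part and the $q=0$ part, where the first must satisfy $\bar p\neq\bar t$ and the second $\bar p\neq\bar t'$.

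First I would prove the direction $(\Rightarrow)$. Assume $T\models\bar t\mid\bar t'$, i.e.\ for all $s,s'\in T$ we have $s(\bar t)\neq s'(\bar t')$. Form $T'=T[\{0,1\}/\bar p]$ and define $g\colon\{0,1\}^k\to\{0,1\}$ by setting $g(\bar a)=1$ if $\bar a$ is \emph{not} of the form $s(\bar t)$ for any $s\in T$, and $g(\bar a)=0$ otherwise; let $F(s)=\{g(s(\bar p))\}$. Then $T''=T'[F/q]$ satisfies $\dep(\bar p;q)$ since $q$ is a function of $\bar p$ by construction. To verify the disjunct, split $T''=S_1\cup S_0$ according to $q=1$ and $q=0$. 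In $S_1$ every tuple $s(\bar p)$ has $g=1$, hence is not realized as any $s(\bar t)$, so in particular $s(\bar p)\neq s(\bar t)$ for the witnessing team, giving $S_1\models\bar p\neq\bar t$; this is where I would be careful, since $\bar p\neq\bar t$ as a team must hold pointwise. For $S_0$ I would use the exclusion hypothesis: any realized value $\bar a=s(\bar t)$ cannot equal any $s'(\bar t')$, so on $S_0$ (where $q=0$) every $\bar p$-value equals some $s(\bar t)$ and therefore differs from all $s'(\bar t')$, yielding $S_0\models\bar p\neq\bar t'$.

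For the converse $(\Leftarrow)$, assume the right-hand side holds, giving $T'$, a single-valued $F$ determining $g$, and the split $T''=S_1\cup S_0$ with $S_1\models q\wedge\bar p\neq\bar t$ and $S_0\models\neg q\wedge\bar p\neq\bar t'$. Take arbitrary $s,s'\in T$; I must show $s(\bar t)\neq s'(\bar t')$. Since $\bar p$ ranges over all values in $T'$, the tuple $\bar a:=s(\bar t)$ is realized by some evaluation in $T''$, which lies in $S_1$ or $S_0$ according to $g(\bar a)$. If $g(\bar a)=1$ then that evaluation is in $S_1\models\bar p\neq\bar t$, forcing $\bar a\neq s(\bar t)=\bar a$, a contradiction; hence $g(\bar a)=0$, so $\bar a$ is realized in $S_0\models\bar p\neq\bar t'$, which gives $\bar a\neq s'(\bar t')$, i.e.\ $s(\bar t)\neq s'(\bar t')$ as required.

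The main obstacle I anticipate is the careful bookkeeping of the locality and the disjunction split: I must argue that a function $F$ witnessing $\exists q$ over the saturated team $T'$ is essentially forced to be of the form $F(s)=\{g(s(\bar p))\}$ because $\dep(\bar p;q)$ holds and $\bar p$ attains every value, and then that the disjunction partitions $T''$ cleanly by the value of $q$ (using flatness of the propositional atoms $q,\neg q$). A subtle point is justifying the pointwise reading of $\bar p\neq\bar t'$ on a subteam, and ensuring that collapsing $T'$ back to the original $\bar t,\bar t'$ values via locality (Proposition on Locality) does not lose information; once these are in place, the two directions are short verifications.
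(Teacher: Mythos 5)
Your proposal is correct and follows essentially the same route as the paper: the same saturated team $T[\{0,1\}/\bar p]$, the same witness function (assigning $q=0$ exactly to the $\bar p$-values realized as some $s(\bar t)$), and the same clean split of the disjunction by the value of $q$; the only cosmetic difference is that you prove the converse directly, whereas the paper argues by contradiction from two evaluations sharing a common $\bar t$/$\bar t'$ value that are forced into different $q$-classes. The subtleties you flag (flatness of $\bar p\neq\bar t$, single-valuedness of $F$ forced by $\dep(\bar p;q)$ on the saturated team) are exactly the points the paper's proof relies on.
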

\begin{proof}
Let $\phi:=\forall \bar{p}\exists q(\dep(\bar{p};q)\wedge((q\wedge \bar{p}\neq \bar{t})\lor(\neg q\wedge \bar{p}\neq \bar{t}')))$ with $\bar{p}:=p_1\dots p_n$. Denote by $T(\bar{t})$ the set $\{s(\bar{t})\mid s\in T\}$ and note that $T\models \bar{t}\mid \bar{t}'$ if and only if $T(\bar{t})\cap T(\bar{t}')=\emptyset$. We now prove the equivalence.

Suppose first that $T\models \bar{t}\mid \bar{t}'$ and define the duplicate team $T':=T[\{0,1\}/{p_1}\}]\dots[\{0,1\}/{p_n}\}]$. Define the function $F:T'\rightarrow P(\{0,1\})$ by 
  $$F(s):=
   \begin{cases}
      \{0\}, \text{ if } s(\bar{p})\in  T'(\bar{t})\\
      \{1\}, \text{ otherwise.} 
   \end{cases}$$
 Let $T'':=T'[F/q]$. Clearly $T''\models \dep(\bar{p};q)$, and we find subteams $T_0,T_1\subseteq T''$ defined by $T_0=\{s\in T''\mid s(q)=0\}$ and $T_1=\{s\in T''\mid s(q)=1\}$ for which $T_0\cup T_1=T''$. By the assumption and locality, $T''(\bar{t})\cap T''(\bar{t}')=\emptyset$, thus $T_0\models \neg q\wedge \bar{p}\neq \bar{t}'$ and $T_1\models  q\wedge \bar{p}\neq \bar{t}$. Hence $T''\models(q\wedge \bar{p}\neq \bar{t})\lor(\neg q\wedge \bar{p}\neq \bar{t}')$ and we conclude that $T\models \phi$. 

For the other direction, suppose that  $T\not\models \bar{t}\mid \bar{t}'$ and, for a contradiction, that $T\models \phi$. Consider the duplicate team $T'$ as above. Then there is a function $F:T'\rightarrow P(\{0,1\})$ such that $T''\models\ \dep(\bar{p};q)\wedge((q\wedge \bar{p}\neq \bar{t})\lor(\neg q\wedge \bar{p}\neq \bar{t}'))$, where $T'':=T'[F/q]$. By assumption, there are $s_1,s_2\in T''$ such that $s_1(\bar{p})=s_1(\bar{t})=s_2(\bar{t}')=s_2(\bar{p})$. Since $\{s_1\}\not\models \bar{p}\neq \bar{t}$, it must be that $\{s_1\}\models \neg q \land \bar{p}\neq \bar{t}'$, so $s_1(q)=0$. By a similar argument, $s_2(q)=1$, contradicting $T''\models \dep(\bar{p};q)$.
\end{proof}

We now apply these equivalences to construct reduction formulas for exclusion atoms in $\QPL(\mid)$.
\begin{proposition}\label[proposition]{qexclu}
    Every $k+1$-ary propositional exclusion atom has an equivalent formula  $\phi\in \QPL(\mid_k)_4$.
\end{proposition}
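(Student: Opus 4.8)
The plan is to mirror the proof of \Cref{theoremk+1tok4} for inclusion atoms, replacing the inclusion--anonymity translations by the dependence--exclusion translations \Cref{eq3} and \Cref{eq4}, and replacing the anonymity reduction formula by the dependence reduction formula \Cref{reductionfmlas} \cref{reductionfmlas item dep}. Concretely, given the $(k+1)$-ary atom $\bar t\mid\bar t'$ with $|\bar t|=|\bar t'|=k+1$, I would compose three transformations: first express it with a dependence atom, then lower that atom's arity, then express the resulting dependence atoms back with exclusion atoms of the target arity.

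First I would rewrite $\bar t\mid\bar t'$ using \Cref{eq3}. This yields a quantified formula whose only team-semantic atom is a single dependence atom $\dep(\bar p;q)$ with $|\bar p|=k+1$, hence of arity $k+1$; the remaining conjuncts $q$, $\neg q$, $\bar p\neq\bar t$, $\bar p\neq\bar t'$ are built from propositional literals and the defined abbreviations and so contribute no atoms. Next I would apply the dependence reduction formula \Cref{reductionfmlas} \cref{reductionfmlas item dep} twice to this atom. Each application splits off one variable as a propositional literal $p_i$ or $\neg p_i$ joined by $\wedge$ and $\lor$, lowering the arity by one while doubling the number of dependence atoms. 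Starting from one atom of arity $k+1$, the first application produces two atoms of arity $k$, and the second produces four atoms of arity $k-1$.

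Finally I would eliminate each of these four arity-$(k-1)$ dependence atoms via \Cref{eq4}. Since $|\bar p|=k-1$ there, the formula $\forall z(z=q\lor\bar p z\mid\bar p q)$ introduces exactly one exclusion atom $\bar p z\mid\bar p q$ of arity $(k-1)+1=k$, while $z=q$ is again only an abbreviation and contributes no atom. Composing the three steps thus yields a $\QPL$-formula equivalent to $\bar t\mid\bar t'$ that contains exactly four exclusion atoms, each of arity $k$, i.e.\ a formula in $\QPL(\mid_k)_4$, as required.

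There is no deep obstacle once the component equivalences are in hand; the work is bookkeeping, and the point to watch is precisely the arity count at the last step. Applying the dependence reduction only once would leave atoms of arity $k$, which \Cref{eq4} would raise back to exclusion atoms of arity $k+1$, defeating the reduction---so exactly two reductions, and hence four atoms, are needed to land at arity $k$. I would also choose the bound auxiliary variables ($q$, and the distinct copies of $z$ introduced by the four applications of \Cref{eq4}) to be fresh and mutually distinct so that the nested quantifier blocks do not clash, but verifying this and the three equivalences is routine.
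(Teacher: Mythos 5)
Your proposal matches the paper's proof essentially step for step: apply the translation in \Cref{eq3} to obtain a single $(k+1)$-ary dependence atom, apply the reduction formula of \Cref{reductionfmlas}~\cref{reductionfmlas item dep} twice to get four dependence atoms of arity $k-1$, and then convert each back to a $k$-ary exclusion atom via \Cref{eq4}. The arity bookkeeping (in particular, that \Cref{eq4} raises arity by one, so two reductions are needed) is exactly the point the paper's construction relies on, and your count of four atoms is correct.
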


\begin{proof}
We apply the translation in \Cref{eq3} to an exclusion atom $t_1\dots t_{k+1}\mid t'_1\dots t'_{k+1}$, together
with the reduction formula 
\[
\dep(p_1\dots p_{k+1};q)\equiv ((p_{k+1}\wedge \dep(p_1\dots p_{k}; q))\lor(\neg p_{k+1}\wedge \dep(p_1\dots p_{k}; q)))
\]
for propositional dependence atoms twice. First, to obtain
\begin{align*}
    t_1\dots t_{k+1}\mid t'_1\dots t'_{k+1}\equiv & \forall p_1\dots\forall p_{k+1}\exists q((\xi_{k}^+\lor\xi_{k}^-)\\
    &\wedge((q\wedge \bar{p}\neq \bar{t})\lor(\neg q\wedge \bar{p}\neq \bar{t}'))),
\end{align*}
and then
\begin{align*}
    t_1\dots t_{k+1}\mid t'_1\dots t'_{k+1}\equiv & \forall p_1\dots\forall p_{k+1}\exists q(((p_{k+1}\wedge(\xi_{k-1}^+\lor\xi_{k-1}^-))\\
    &\lor(\neg p_{k+1}\wedge(\xi_{k-1}^+\lor\xi_{k-1}^-)))\\
    &\wedge((q\wedge \bar{p}\neq \bar{t})\lor(\neg q\wedge \bar{p}\neq \bar{t}'))),
\end{align*}
where $\xi_{j}^+:=p_{j+1}\wedge\dep(p_1\dots p_j;q)$ and $\xi_{j}^-:=\neg p_{j+1}\wedge\dep(p_1\dots p_j;q)$.
Lastly, we can translate back to a formula with propositional exclusion atoms by using the equivalence
$\dep(\bar{p};q)\equiv \forall z(z=q\lor \bar{p}z\mid \bar{p}q)$. The reduction formula is then
\begin{align*}
    t_1\dots t_{k+1}\mid t'_1\dots t'_{k+1}\equiv \,\,& \forall p_1\dots\forall p_{k+1}\exists q(((p_{k+1}
    \wedge((p_k\wedge\phi)\lor(\neg p_k\wedge\phi))\\
    &\lor(\neg p_{k+1}\wedge(p_k\wedge\phi)\lor(\neg p_k\wedge\phi)))\\
    &\wedge(q\wedge \bar{p}\neq \bar{t})\lor(\neg q\wedge \bar{p}\neq \bar{t}')),
\end{align*}
where $\phi:=\forall z(z=q\lor p_1\dots p_{k-1}z\mid p_1\dots p_{k-1}q))$.
The formula is clearly in $\QPL(\mid_k)_4$.
\end{proof}




\subsection{Relativized inclusion and exclusion atoms} \label{section rel}

We introduce \emph{relativized} variants of inclusion and exclusion atoms, each having simple reduction formulas. We also show that propositional logic extended with each relativized atom is expressively complete. We end the section by discussing the lack of such expressive completeness results for the logics obtained from $\PL$ extended with the usual propositional inclusion and exclusion atoms.

Let $\bar{p}=p_1\dots p_k$ and $\bar{q}=q_1\dots q_k$ be sequences of propositional symbols and let $\alpha,\beta$ be formulas of propositional logic. Then $(\bar{p};\alpha)\subseteq(\bar{q};\beta)$ and $(\bar{p};\alpha)\mid(\bar{q};\beta)$ are relativized propositional inclusion and exclusion atoms with the following semantics: 
\begin{align*}
X\models(\bar{p};\alpha)\subseteq(\bar{q};\beta)      \, \text{ iff }& 
\text{for all } s\in X, \text{ if } s\models\alpha, \text{ then there is } s'\in X \\
&\text{  such that }s'(\bar{q})=s(\bar{p})  \text{ and }s'\models\beta.\\
X \models (\bar{p};\alpha)\mid(\bar{q};\beta)
 \text{ iff }& 
\text{for all } s,s' \in X,\ 
\text{if } s \models \alpha \text{ and } s' \models \beta,\ \\
&\text{then } s(\bar{p}) \neq s'(\bar{q}).
\end{align*}
%
%
%

We note that the relativized inclusion atoms are union closed and the relativized exclusion atoms are downward closed. We provide an example for the relativized inclusion atoms.

\begin{example}\label[example]{relincex}
Consider \Cref{relincex_table} that describes for each role at a company whether it is part-time, requires previous experience in the team, and who is assigned to that role in the first and second year.

\begin{table}[h]   \centering\resizebox{1\columnwidth}{!}{
$\begin{array}{ccccc}
\toprule	
 \textit{role} & \textit{part-time} & \textit{experience} & \textit{year 1} & \textit{year 2}   \\ 
 \midrule 
\text{back office} &1 & 0 &  \text{Bob Jones} &     \text{Bob Jones}  \\ 
\text{customer service (phone)} &1 &0 & \text{Alice Wilson}&     \text{Alice Wilson}   \\ 
\text{customer service (chat)} &0 &0 & \text{Jane Smith}&     \text{Mary Brown}   \\ 
\text{team leader} &0 & 1 &  \text{John Williams} &     \text{Jane Smith}  \\ 
\text{customer experience advisor} &1 &1 & \text{Emma Baker} &  \text{Emma Baker}     \\ 
\bottomrule 
\end{array}$}
   \caption{The team used in \Cref{relincex}.
   } \label{relincex_table}
\end{table}

Let the team $X$ be defined over the variables $q_{prt},q_{exp},\bar{p}_1, \bar{p}_2$, with the roles of the table being its assignments. Based on the table, the propositional variable $q_{prt}$ states whether the role is part-time or not, and the propositional variable $q_{exp}$ states whether the role requires experience or not. Furthermore, $\bar{p}_1$ and $\bar{p}_2$ are tuples of propositional variables that encode the employee names in binary based on columns year 1 and year 2, respectively.

Consider the formula $\phi:=(\bar{p}_2;q_{exp})\subseteq(\bar{p}_1;\neg q_{prt}\lor q_{exp})$. The formula $\phi$ states that in the second year, the employees working in roles that require previous experience in the team must have worked in the first year either in a full-time role or in a role that requires experience.

To determine whether $X\models\phi$, it suffices to check the claim for the year 2 team leader Jane Smith and the year 2 customer experience advisor Emma Baker. Since in the first year, Jane Smith worked in a full-time role, and Emma Baker had a job that required experience, we conclude $X\models\phi$. 
\end{example}

We state the reduction formulas for the relativized inclusion and exclusion atoms next, where each iteration reduces the arity by one.

\begin{proposition} Let $\alpha,\beta\in\PL$.
    \begin{enumerate}[label=(\roman*)]
        \item  $\begin{aligned}[t] 
(p_1\dots p_{k+1};\alpha)\subseteq\,&(q_1\dots q_{k+1};\beta)\equiv\\ &(p_1\dots p_{k};\alpha\wedge p_{k+1})\subseteq(q_1\dots q_{k};\beta\wedge q_{k+1})\\
&\wedge(p_1\dots p_{k};\alpha\wedge \neg p_{k+1})\subseteq(q_1\dots q_{k};\beta\wedge \neg q_{k+1}).
\end{aligned}$
        \item $\begin{aligned}[t]  
(p_1\dots p_{k+1};\alpha)\mid\,&(q_1\dots q_{k+1};\beta)\equiv\\ 
&(p_1\dots p_{k};\alpha\wedge p_{k+1})\mid(q_1\dots q_{k};\beta\wedge q_{k+1})\\
&\wedge(p_1\dots p_{k};\alpha\wedge \neg p_{k+1})\mid(q_1\dots q_{k};\beta\wedge \neg q_{k+1}).
\end{aligned}$
    \end{enumerate}
\end{proposition}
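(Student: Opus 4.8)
The plan is to prove each equivalence by unfolding the relativized semantics and splitting on the value assigned to the extra coordinate $p_{k+1}$ (respectively $q_{k+1}$). The single structural fact driving both arguments is that two concatenated value-sequences $s(\bar{p}p_{k+1})$ and $s'(\bar{q}q_{k+1})$ agree exactly when $s(\bar{p})=s'(\bar{q})$ and $s(p_{k+1})=s'(q_{k+1})$, and hence differ exactly when at least one of these fails. Since every evaluation assigns $p_{k+1}$ (and $q_{k+1}$) either $0$ or $1$, the choice between satisfying $p_{k+1}$ and satisfying $\neg p_{k+1}$ partitions the relevant evaluations into two cases, and these two cases are precisely what the two conjuncts on the right-hand side isolate.

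For item (i) I would first argue left to right: assuming $X\models(\bar{p}p_{k+1};\alpha)\subseteq(\bar{q}q_{k+1};\beta)$, take any $s\models\alpha\wedge p_{k+1}$; then $s\models\alpha$ supplies a witness $s'\models\beta$ with $s'(\bar{q}q_{k+1})=s(\bar{p}p_{k+1})$, and since $s(p_{k+1})=1$ this forces $s'(q_{k+1})=1$, so $s'\models\beta\wedge q_{k+1}$ and $s'(\bar{q})=s(\bar{p})$, verifying the first conjunct; the second conjunct is the symmetric case $s\models\alpha\wedge\neg p_{k+1}$. For the converse, given both conjuncts and an arbitrary $s\models\alpha$, I split on whether $s\models p_{k+1}$ or $s\models\neg p_{k+1}$, apply the corresponding conjunct to obtain a witness $s'$ with matching first $k$ coordinates and a matching $(k+1)$-th coordinate, and reassemble the identity $s'(\bar{q}q_{k+1})=s(\bar{p}p_{k+1})$.

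For item (ii) the structure is identical but phrased through non-equality of tuples. Left to right, if $s\models\alpha\wedge p_{k+1}$ and $s'\models\beta\wedge q_{k+1}$, then their $(k+1)$-th coordinates both equal $1$, so the hypothesis $s(\bar{p}p_{k+1})\neq s'(\bar{q}q_{k+1})$ cannot be witnessed by that coordinate and must come from the first $k$, giving $s(\bar{p})\neq s'(\bar{q})$; the $\neg p_{k+1}/\neg q_{k+1}$ case is symmetric. For the converse, given $s\models\alpha$ and $s'\models\beta$, I distinguish whether $s(p_{k+1})=s'(q_{k+1})$: if they differ, the concatenated tuples already differ and we are done, and if they agree (both $1$ or both $0$) the appropriate conjunct yields $s(\bar{p})\neq s'(\bar{q})$, which again forces the full tuples apart.

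Both arguments are routine case analyses, so I do not anticipate a genuine obstacle; the only point demanding care is the bookkeeping in item (ii), where the relativizing formulas on the two sides are shifted, in that $\alpha$ is paired with $p_{k+1}$ while $\beta$ is paired with $q_{k+1}$. One must therefore ensure the value split is taken consistently on both the $p_{k+1}$ and $q_{k+1}$ coordinates rather than on a single one, so that the two conjuncts cover exactly the cases where these coordinates coincide, leaving the ``coordinates differ'' case to be handled directly.
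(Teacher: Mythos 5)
Your proposal is correct and follows essentially the same route as the paper's proof: unfold the relativized semantics and split on the value of the $(k+1)$-th coordinate, observing that equality (resp.\ inequality) of the concatenated tuples decomposes coordinatewise. The paper merely packages the same case analysis as a chain of equivalences rather than two separate implications.
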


\begin{proof}Let $\bar{p}=p_1\dots p_k$ and $\bar{q}=q_1,\dots,q_k$.
    \begin{enumerate}[label=(\roman*)]
        \item  $\begin{aligned}[t] 
&T\models(\bar{p} p_{k+1};\alpha)\subseteq(\bar{q} q_{k+1};\beta)\\ 
%
%
&\iff \forall s\in T, \text{ if } s\models \alpha \text{ then }  \exists s'\in T \text{ s.t. } s'\models\beta \text{ and } s(p_{k+1})=s'(q_{k+1})\\
&\quad\quad\quad\text{ and }  s(\bar{p})=s'(\bar{q}).\\
&\iff \forall s\in T, \text{ if } s\models \alpha\land p_{k+1} \text{ then }  \exists  s'\in T \text{ s.t. } s'\models\beta\land q_{k+1} \text{ and } \\
&\quad\quad\quad s(\bar{p})=s'(\bar{q}); \text{ and if } s\models \alpha\land \neg p_{k+1} \text{ then }  \exists  s'\in T \text{ s.t.}\\
&\quad\quad\quad s'\models\beta\land \neg q_{k+1} \text{ and } s(\bar{p})=s'(\bar{q}).\\
&\iff T\models (\bar{p};\alpha\wedge p_{k+1})\subseteq(\bar{q};\beta\wedge q_{k+1})\wedge(\bar{p};\alpha\wedge \neg p_{k+1})\subseteq(\bar{q};\beta\wedge \neg q_{k+1}).
\end{aligned}$

        \item $\begin{aligned}[t]  
&T\models (\bar{p} p_{k+1};\alpha)\mid(\bar{q}q_{k+1};\beta)\\ 
&\iff \forall s,s' \in T,\ 
\text{if } s \models \alpha \text{ and } s' \models \beta, \text{then } s(p_i) \neq s'(q_i) \text{ for some}\\
&\quad\quad\quad  1 \leq i \leq k+1. \\
&\iff \forall s,s' \in T, 
\text{ if } s \models \alpha\land p_{k+1}\text{ and } s' \models \beta\land q_{k+1}, \text{ then } s(\bar{p}) \neq s'(\bar{q}); \\
&\quad\quad\quad \text{ and if } s \models \alpha\land\neg p_{k+1}\text{ and } s' \models \beta\land\neg q_{k+1}, \text{then } s(\bar{p}) \neq s'(\bar{q}).\\
&\iff T\models(\bar{p};\alpha\wedge p_{k+1})\mid(\bar{q};\beta\wedge q_{k+1})\wedge(\bar{p};\alpha\wedge \neg p_{k+1})\mid(\bar{q};\beta\wedge \neg q_{k+1}).
\end{aligned}$
    \end{enumerate}    
\end{proof}

We have the following noteworthy equivalences for the fully reduced atoms: 
$(\emptyset;\alpha)\subseteq(\emptyset;\beta)\equiv \neg \alpha\lor\might\beta\equiv \neg \alpha\glor\might\beta$; and $(\emptyset;\alpha)\mid(\emptyset;\beta)\equiv \neg\alpha\glor \neg\beta$, where 
\begin{align*}
 &T \models \might \phi \text{ iff }  T = \emptyset \text{ or there is a nonempty } S \subseteq T \text{ such that } S \models \phi, \\
 &T\models\phi\glor\psi \text{ iff } T\models \phi \text{ or } T\models\psi.
\end{align*}
%


By extending $\PL$ with relativized inclusion and exclusion atoms, we obtain the logics $\PL(\subseteq^\mathsf{r})$ and  $\PL(\mid^\mathsf{r})$, respectively. We show that these logics are expressively equivalent with  $PL(\subseteq^{\top\bot})$ and ${\PL(\dep(\dots))}$, and thus expressively complete for all union closed properties with the empty team, and all nonempty downward closed properties, respectively. Recall from \cite{yang2022} that $PL(\subseteq^{\top\bot})$  is $\PL$ extended with the quasi upward closed  \emph{primitive inclusion atoms} ($\subseteq^{\top\bot}$)
of the form $\bar{x}\subseteq\bar{p}$, where $\bar{x}$ is a sequence of constants $\top,\bot$ and $\bar{p}$ is a sequence of distinct propositional symbols. Similarly, we introduce $\PL(\mid^{\top\bot})$ in which the \emph{extended exclusion atoms} can have both propositional symbols and the constants $\top,\bot$ within its `variable' sequences.

\begin{theorem} \label{expr compl exc inc}
The logics $\PL(\subseteq^\mathsf{r})$ and $\QPL(\subseteq)$ are expressively complete for all union closed properties with the empty team. In particular, 
$$\PL(\subseteq^{\top\bot})\equiv \PL(\subseteq^\mathsf{r})\equiv\QPL(\subseteq)\equiv\PL(\Upsilon).$$

The logics $\PL(\mid^\mathsf{r})$ and $\QPL(\mid)$ are expressively complete for all downward closed properties with the empty team. In particular, 
$$ \PL(\mid^{\top\bot})\equiv \PL(\mid^\mathsf{r})\equiv\QPL(\mid)\equiv  \PL(\dep(\dots)).$$
\end{theorem}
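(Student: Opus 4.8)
The plan is to anchor both chains to the two benchmark completeness results already in the literature: $\PL(\Upsilon)$ is expressively complete for all union closed properties with the empty team \cite{yang2022}, and $\PL(\dep(\dots))$ is expressively complete for all downward closed properties containing the empty team \cite{MR3488885}. Once these are in hand, each stated equivalence $\mathcal{L}\equiv\PL(\Upsilon)$ (resp. $\mathcal{L}\equiv\PL(\dep(\dots))$) splits into a \emph{soundness} inclusion $\mathcal{L}\leq\PL(\Upsilon)$ and a \emph{completeness} inclusion $\PL(\Upsilon)\leq\mathcal{L}$, and symmetrically for the exclusion side; transitivity then yields the full chains and the two headline completeness claims.

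For the soundness inclusions I would check, by a routine induction, that every formula of $\QPL(\subseteq)$, $\PL(\subseteq^\mathsf{r})$, $\PL(\subseteq^{\top\bot})$ defines a union closed property with the empty team, and that every formula of $\QPL(\mid)$, $\PL(\mid^\mathsf{r})$, $\PL(\mid^{\top\bot})$ defines a downward closed property with the empty team. The base cases are the closure facts noted already: inclusion, relativized inclusion and primitive inclusion atoms are union closed (the last because quasi upward closed families are union closed), exclusion, relativized exclusion and extended exclusion atoms are downward closed, and all of these atoms hold vacuously of the empty team. It then remains to confirm that $\wedge$, the lax disjunction $\vee$, and the quantifiers $\exists,\forall$ preserve the relevant closure property together with the empty team property. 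With these in place, the completeness of the benchmark logic for the corresponding class immediately gives the $\leq$ direction.

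For the completeness inclusions I would exhibit, inside each logic, an explicit definition of the benchmark atom. For the quantified logics these are already recorded: \eqref{eq2} defines $\bar{p}\Upsilon q$ in $\QPL(\subseteq)$ and \eqref{eq4} defines $\dep(\bar{p};q)$ in $\QPL(\mid)$, so substituting them into an arbitrary benchmark formula yields $\PL(\Upsilon)\leq\QPL(\subseteq)$ and $\PL(\dep(\dots))\leq\QPL(\mid)$. For the relativized logics I would use the fully relativized atoms directly, verifying $\bar{p}\Upsilon q\equiv(\bar{p};q)\subseteq(\bar{p};\neg q)\wedge(\bar{p};\neg q)\subseteq(\bar{p};q)$ and $\dep(\bar{p};q)\equiv(\bar{p};q)\mid(\bar{p};\neg q)$, which give $\PL(\Upsilon)\leq\PL(\subseteq^\mathsf{r})$ and $\PL(\dep(\dots))\leq\PL(\mid^\mathsf{r})$. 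For the extended exclusion logic I would verify the analogous identity $\dep(\bar{p};q)\equiv\bar{p}\,q\,\bot\mid\bar{p}\,\top\,q$, where the two extra constant coordinates force any matching pair of evaluations to agree on $\bar{p}$ while taking the values $q=1$ and $q=0$; this yields $\PL(\dep(\dots))\leq\PL(\mid^{\top\bot})$. Finally I would obtain $\PL(\subseteq^{\top\bot})\equiv\PL(\Upsilon)$ by noting that $\PL(\subseteq^{\top\bot})$ is itself expressively complete for union closed properties with the empty team via the primitive-inclusion normal form of \cite{yang2022}; alternatively, each primitive inclusion atom can be absorbed into a relativized inclusion atom by encoding its constant coordinates in the relativizing propositional formulas, giving $\PL(\subseteq^{\top\bot})\leq\PL(\subseteq^\mathsf{r})$.

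I expect the completeness direction, not soundness, to be the crux: the soundness inclusions are bookkeeping once the closure-preservation statements for the connectives and quantifiers are established, whereas the completeness inclusions rest entirely on producing correct defining formulas for anonymity and dependence within each logic. The relativized and extended-atom identities are short but must be checked with care — in particular the extended exclusion identity depends on correctly tracking which of the two compared evaluations each constant coordinate constrains — and the $\PL(\subseteq^{\top\bot})$ case leans essentially on the external normal-form result of \cite{yang2022}, since anonymity does not seem expressible by a single primitive inclusion atom. A secondary point requiring attention is confirming that the lax quantifiers $\exists,\forall$ preserve \emph{union} closure (and not only downward closure) along with the empty team property, so that the soundness argument applies uniformly to $\QPL(\subseteq)$ as well as to $\QPL(\mid)$.
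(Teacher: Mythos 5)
Your proposal is correct and follows essentially the same route as the paper: anchor to the expressive completeness of $\PL(\Upsilon)$ and $\PL(\dep(\dots))$, verify the closure properties for soundness, and supply explicit defining formulas (including \eqref{eq2} and \eqref{eq4} for the quantified logics) for the completeness inclusions. The only difference is cosmetic: where you define the full $k$-ary atoms directly via $(\bar{p};q)\subseteq(\bar{p};\neg q)\wedge(\bar{p};\neg q)\subseteq(\bar{p};q)$, $(\bar{p};q)\mid(\bar{p};\neg q)$ and $\bar{p}\,q\,\bot\mid\bar{p}\,\top\,q$ (all of which check out), the paper exploits the expressive completeness of the $0$-ary/unary fragments $\PL(\dep(\dots)_0)$ and $\PL(\subseteq^{\top\bot}_1)$ so that it only needs to express $\dep(p)$ and unary primitive inclusion atoms in the relativized and extended-atom logics.
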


\begin{proof}
It is easy to confirm that the logics $\PL(\subseteq^\mathsf{r})$ and $\QPL(\subseteq)$ are union closed and have the empty team property. We know from \cite{yang2022} that $\PL(\subseteq^{\top\bot})$ and $\PL(\Upsilon)$ are expressively complete for all union closed properties with the empty team. We obtain  $\PL(\Upsilon)\leq \QPL(\subseteq)$ by the translation in \Cref{eq2}, showing that propositional anonymity atoms can be expressed in $\QPL(\subseteq)$. 

Next, we show that $\PL(\subseteq^{\top\bot})\leq \PL(\subseteq^\mathsf{r})$. For $x\in\{\top,\bot\}$, define $q^{\top}=q$ and $q^{\bot}=\neg q$. By \cite{yang2022}, $\PL(\subseteq^{\top\bot})\equiv \PL(\subseteq^{\top\bot}_1)$, hence it suffices to express unary primitive inclusion atoms using relativized inclusion atoms, which we do by 
$(\emptyset;\top)\subseteq(\emptyset;q^x)\equiv \neg \top\lor\might q^x\equiv \might q^x$, where $\might q^\top\equiv\top\subseteq q$ and $\might q^\bot\equiv\bot\subseteq q$. 

Similarly, it is straightforward to check that the logics $\PL(\mid^{\top\bot})$, $\PL(\mid^\mathsf{r})$ and $\QPL(\mid)$ are downward closed and have the empty team property. Thus, it remains to show that the three logics are at least as expressive as the expressively complete logics for nonempty downward closed properties, ${\PL(\dep(\dots))}$ and ${\PL(\dep(\dots)_0)}$ \cite{Yang2017}. Using the translation in \Cref{eq4}, we obtain ${\PL(\dep(\dots))}\leq\QPL(\mid)$. Since 
$(\emptyset;p)\mid(\emptyset;\neg p)\equiv \neg p\glor \neg\neg p \equiv   \dep(p)\equiv \top p\mid p\bot$, we also have that
$\PL(\dep(\dots)_0)\leq\PL(\mid^\mathsf{r})$ and $\PL(\dep(\dots)_0)\leq\PL(\mid^{\top\bot})$, completing the proof. 
\end{proof}

We recall from \cite{yang2022} that $\PL(\subseteq)$ is not expressively complete for all union closed team properties with the empty team, since over a singleton set of propositional symbols, $\PL(\subseteq)\equiv\PL$, meaning only flat properties can be expressed. For $\PL(\mid)$, an analogous argument can be made. 
We conclude the section by giving an alternative proof without relying on locality to show that the inequalities in $\PL(\mid)\lneq \PL(\dep(\dots)_0)$ and $\PL(\subseteq)\lneq\PL(\subseteq^{\top\bot})$ are strict, by identifying a union closure-like property for $\PL(\mid)$, and a downward closure-like property for $\PL(\subseteq)$.

\begin{proposition} Let $\PL(\mid)$ and $\PL(\subseteq)$  be $\PL$ extended with exclusion/inclusion atoms over propositional symbols. \label[proposition]{inexpr inc exc}
    \begin{enumerate}[label=(\roman*)]
        \item $\PL(\mid)\lneq\PL(\dep(\dots)_0)$.
        \item $\PL(\subseteq) \lneq\PL(\subseteq^{\top\bot})$.\footnote{The proof of the second item is joint work with Pietro Galliani, and is presented here with his permission.}
    \end{enumerate}
\end{proposition}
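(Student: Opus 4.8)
The plan is to prove both strict inequalities by the same two-step recipe: the non-strict inequalities $\PL(\mid)\leq\PL(\dep(\dots)_0)$ and $\PL(\subseteq)\leq\PL(\subseteq^{\top\bot})$ are already in hand, since $\PL(\mid)$ is downward closed with the empty team property and $\PL(\dep(\dots)_0)$ is expressively complete for exactly those properties, while $\PL(\subseteq)$ is union closed with the empty team property and $\PL(\subseteq^{\top\bot})$ is complete for those (both via \Cref{expr compl exc inc}, together with $\PL(\mid)\leq\QPL(\mid)$ and $\PL(\subseteq)\leq\QPL(\subseteq)$). So it suffices to produce, on each right-hand side, a formula not equivalent to anything on the left. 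Rather than invoking locality, I would isolate a closure property preserved by the whole logic, prove it by induction on formulas, and then exhibit an atom that violates it.

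For item (i) I would take the separating formula to be the constancy atom $\dep(p)\in\PL(\dep(\dots)_0)$, whose team property over the single variable $p$ is $\lVert\dep(p)\rVert^{p}=\{\emptyset,\{s_0\},\{s_1\}\}$ (with $s_0(p)=0$, $s_1(p)=1$); this is downward closed but \emph{not} union closed, since $\{s_0\}\cup\{s_1\}=\{s_0,s_1\}\notin\lVert\dep(p)\rVert^{p}$. The union-closure-like property is then: every $\phi\in\PL(\mid)$ with $\FV(\phi)\subseteq\{p\}$ defines a union-closed family $\lVert\phi\rVert^{p}$. I would prove this by induction. The literals $p,\neg p$ are union closed; an exclusion atom all of whose arguments are the single symbol $p$ satisfies $T(\bar p)=T(\bar q)$, so $\lVert \bar p\mid\bar q\rVert^{p}=\{\emptyset\}$, which is union closed; conjunction preserves union closure because $\lVert\phi\wedge\psi\rVert=\lVert\phi\rVert\cap\lVert\psi\rVert$; and the lax disjunction preserves it because, writing each member of $\lVert\phi\vee\psi\rVert$ as $A_i\cup B_i$ with $A_i\models\phi$ and $B_i\models\psi$, a union $\bigcup_i(A_i\cup B_i)=(\bigcup_i A_i)\cup(\bigcup_i B_i)$ again splits into a model of $\phi$ and a model of $\psi$. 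Since $\dep(p)$ fails this property, $\dep(p)\not\equiv\phi$ for every $\phi\in\PL(\mid)$, giving $\PL(\mid)\lneq\PL(\dep(\dots)_0)$.

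Item (ii) is the dual. I would take the primitive inclusion atom $\top\subseteq p\in\PL(\subseteq^{\top\bot})$, with $\lVert\top\subseteq p\rVert^{p}=\{\emptyset,\{s_1\},\{s_0,s_1\}\}$, which is quasi upward closed and hence \emph{not} downward closed ($\{s_0\}\subseteq\{s_0,s_1\}$ but $\{s_0\}\notin\lVert\top\subseteq p\rVert^{p}$). The downward-closure-like property reads: every $\phi\in\PL(\subseteq)$ with $\FV(\phi)\subseteq\{p\}$ defines a downward-closed $\lVert\phi\rVert^{p}$. The induction is symmetric: literals are downward closed; an inclusion atom over the single symbol $p$ has $T(\bar p)=T(\bar q)$, so it defines the full family $P(\Eval^{p})$; conjunction preserves downward closure by intersection; and the lax disjunction preserves it since, for $T=A\cup B\models\phi\vee\psi$ and $T'\subseteq T$, the sets $A\cap T'$ and $B\cap T'$ still model $\phi,\psi$ (downward closure of the parts) and reunite to $T'$. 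As $\top\subseteq p$ violates this, it is inexpressible in $\PL(\subseteq)$, yielding $\PL(\subseteq)\lneq\PL(\subseteq^{\top\bot})$.

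The main subtlety to get right—and the reason locality cannot simply be discarded—is that the separation \emph{cannot} be purely family-structural: the atom $p\mid q$ already realizes over $\{p,q\}$ a family $\{\emptyset,\{01\},\{10\}\}$ order-isomorphic to $\lVert\dep(p)\rVert^{p}$ (and analogously on the inclusion side), so any isomorphism-invariant property would fail to distinguish them. The closure property must therefore be tied to the \emph{free-variable set}: the real content is that confining $\FV(\phi)$ to a single symbol degenerates every inclusion/exclusion atom to a trivially (union- or downward-)closed family, which is exactly the step that makes the induction go through. I expect the only genuinely delicate inductive clause to be the lax disjunction, and the remaining care to lie in checking that the definition of $\equiv$ (which fixes free variables) legitimately restricts attention to formulas mentioning only $p$.
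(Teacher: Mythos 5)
Your proof is correct under the paper's definitions, but it follows the route the paper deliberately sets aside rather than the one it actually takes. Because $\equiv$ requires $\FV(\phi)=\FV(\psi)$ and, in the quantifier-free setting, $\FV(\phi)=\Var(\phi)$, you may indeed restrict attention to formulas mentioning only $p$; there every exclusion atom collapses to $\{\emptyset\}$ and every inclusion atom to the full powerset, and your inductions through $\wedge$ and lax $\vee$ establishing union closure (resp.\ downward closure) of $\lVert\phi\rVert^{p}$ are sound, so $\dep(p)$ and $\top\subseteq p$ separate the logics. This is essentially the single-variable argument (attributed to \cite{yang2022}) that the paper recalls immediately before the proposition and explicitly wants to bypass: its stated aim is ``an alternative proof without relying on locality,'' i.e.\ without reducing to formulas over one symbol. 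The paper instead works over an arbitrary variable set $\Prop'$ and exploits the all-zero evaluation $s_1$: since $s_1(\bar{u})=s_1(\bar{v})$ for any tuples of symbols, $\{s_1\}$ falsifies every exclusion atom and satisfies every inclusion atom, and the induction then yields the weaker but sufficient properties ``if $\{s_1\}\models\phi$ and $\{s_2\}\models\phi$ then $\{s_1,s_2\}\models\phi$'' for $\PL(\mid)$ and ``if $s_1\in T$ and $T\models\phi$ then $\{s_1\}\models\phi$'' for $\PL(\subseteq)$, violated by the same two separating atoms. What the paper's version buys is robustness: it does not lean on the free-variable clause in the definition of $\equiv$ and would survive a more liberal notion of expressibility permitting auxiliary variables, which---as you yourself observe with $p\mid q$ realizing a family order-isomorphic to $\lVert\dep(p)\rVert^{p}$---your argument would not.
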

\begin{proof} Let $\Prop'$ be a set of propositional symbols.
    \begin{enumerate}[label=(\roman*)]
    \item 
    Let $T=\{s_1,s_2\}$ be a team over $\Prop'$ with $s_1(r)=0$ for all $r\in \Prop'$, and $s_2(p)=1$ and $s_2(r)=0$ for all other $r\in \Prop'$.
\begin{table}[h]
    \centering
$\begin{array}{cccc ccc}
\toprule	
& p & q_1 & q_2 & \dots &q_i & \dots  \\ 
 \midrule
s_1&0 & 0 &0&     \dots &0& \dots \\ 
s_2&1 &0 &0&     \dots &0& \dots  \\ 
\bottomrule 
\end{array}$
\end{table}

We claim that all $\phi\in\ \PL(\mid)$ have the following property: If $\{s_1\}\models \phi$ and $\{s_2\}\models \phi$, then $\{s_1,s_2\}\models \phi$. 

\begin{enumerate}
    \item If $\phi$ is some $r\in \Prop'$ or $\neg\alpha$, then the claim holds by union closure.
    \item If $\phi=u|v$, then clearly $\{s_1\}\not\models \phi$, so the claim vacuously holds. 
\item If $\phi=\psi\land\chi$ and we assume $\{s_1\}\models \psi\land\chi$ and $\{s_2\}\models \psi\land\chi$, then  $\{s_1\}\models \psi$ and $\{s_2\}\models \psi$, so by the induction hypothesis, $\{s_1,s_2\}\models \psi$. By the same argument $\{s_1,s_2\}\models \chi$, hence $\{s_1,s_2\}\models \phi$. 

\item If $\phi=\psi\lor\chi$ and we assume $\{s_1\}\models \psi\lor\chi$ and $\{s_2\}\models \psi\lor\chi$, then if both singleton teams satisfy the same disjunct, say $\{s_1\}\models \psi$ and $\{s_2\}\models \psi$, we have by the induction hypothesis that $\{s_1,s_2\}\models \psi$ and thus $\{s_1,s_2\}\models \psi\lor\chi$. Suppose instead that the singletons satisfy different disjuncts, say $\{s_1\}\models \psi$ and $\{s_2\}\models \chi$. Then by the semantic clause for disjunction $\{s_1,s_2\}\models \psi\lor\chi$.
\end{enumerate}

This property implies in particular that no formula in $\PL(\mid)$ can express $\dep(p)$, since $\{s_1\}\models\ \dep(p)$ and $\{s_2\}\models\ \dep(p)$, but $\{s_1,s_2\}\not\models\ \dep(p)$.
   
    \item 
     Let $T$ be a team over $\Prop'$ and let $s_1(r)=0$ for all $r\in \Prop'$.
We claim that all formulas $\phi\in\ \PL(\subseteq)$ have the following property: If $s_1\in T$ and $T\models \phi$, then $\{s_1\}\models \phi$.

    \begin{enumerate}
    \item If $\phi$ is some $r\in \Prop'$ the claim holds vacuously by $s_1\in T$. If $\phi=\neg\alpha$, then the claim holds by downward closure.
    \item If $\phi=u\subseteq v$, then clearly $\{s_1\}\models \phi$, so the claim trivially holds. 
\item If $\phi=\psi\land\chi$ and $T\models \psi\land\chi$, then  $T\models \psi$ and $T\models \psi$, so by the induction hypothesis, $\{s_1\}\models \psi$ and $\{s_1\}\models\chi$, hence $\{s_1\}\models \psi\land\chi$. 

\item If $\phi=\psi\lor\chi$ and $T\models \psi\lor\chi$, then we find $T_1,T_2\subseteq T$ such that $T_1\cup T_2=T$ and $T_1\models \phi$ and $T_2\models \chi$. W.l.o.g., suppose $s_1\in T_1$. Then by the induction hypothesis, $\{s_1\}\models \psi$ and we conclude $\{s_1\}\models \psi\lor\chi$.
\end{enumerate}
We conclude that no formula in $\PL(\subseteq)$ can express $\top\subseteq p$, since $\{s_1,s_2\}\models \top\subseteq p$ but $\{s_1\}\not\models \top\subseteq p$. 
\end{enumerate}
\end{proof}

\section{Inexpressibility results} \label{section 5}

We examine some translations using the upper dimension, and show that they are optimal in the following sense:  fewer occurrences of atoms of a lower arity are not sufficient in expressing the original formula. We obtain sharp results for dependence and anonymity atoms but only estimates for inclusion and exclusion atoms.

%



First, we obtain an upper bound for the upper dimension of a formula in $\phi\in\QPL(\circledast_k)_n$ by considering the upper dimension of the $\circledast$-atom and its number of occurrences in $\phi$.

\begin{proposition}\label[proposition]{aritydim} If $\phi\in\QPL(\circledast_k)_n$, then $\Dim(\lVert\phi\rVert^{\bar{p}})\leq (\Dim(\lVert{\circledast}\rVert^{\bar{p}}))^n$ where $\circledast$ is a $k$-ary atom from either $\dep(\dots)$, $\mid$, $\subseteq$, $\subseteq^{\top\bot}$ or $\Upsilon$.
\end{proposition}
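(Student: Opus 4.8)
The plan is to prove a slightly sharper statement by structural induction on $\phi$: writing $m(\psi)$ for the number of $\circledast$-occurrences in a formula $\psi$, I would show $\Dim(\lVert\psi\rVert^{\bar p})\le d_k^{\,m(\psi)}$, where $d_k:=\Dim(\lVert\circledast\rVert^{\bar p})$ is the dimension of the $k$-ary atom; the proposition then follows from $m(\phi)\le n$ and $d_k\ge 1$. Throughout I fix a single ambient tuple $\bar p$ listing all propositional variables occurring in $\phi$ (bound ones included), so that every subformula $\psi$ satisfies $\FV(\psi)\subseteq\Var(\bar p)$ and the multiplicativity bounds of \Cref{kripkeop} apply verbatim to one fixed variable set; by the locality invariance \Cref{dim and locality prop} this introduces no loss when comparing against the dimension of $\phi$ over its own free variables.

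The one step that is not purely mechanical is a preliminary monotonicity observation needed to handle the phrase ``at most $k$-ary'': a $\circledast$-atom of any arity $j\le k$ must have dimension at most $d_k$. For $\dep(\dots)$, $\Upsilon$, $\subseteq$ and $\mid$ this is read off from the explicit values $2^{2^j}$, $2^{2^j}$, $2^{2^j}-2^j$ and $2^{2^j}-2$ of their $j$-ary versions, each of which is nondecreasing in $j$. The primitive inclusion atom $\subseteq^{\top\bot}$ is not covered by \Cref{dim_families}, so I would argue it separately: a $j$-ary atom $\bar x\subseteq\bar p$ is satisfied exactly by the empty team and by every team containing the unique evaluation $s$ with $s(\bar p)=\bar x$, so its team property is quasi upward closed and hence has dimension $2$ by \Cref{trivial dim} \cref{trivial dim quasi}, independently of $j$. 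In every case $d_k\ge 2\ge 1$.

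The induction is then routine. A literal $p_i$ or $\neg p_i$ lies in $\QPL$, hence has dimension $1=d_k^{\,0}$, and a lone $\circledast$-atom of arity $j\le k$ has dimension at most $d_k=d_k^{\,1}$ by the observation above. For $\phi=\psi\wedge\chi$ or $\phi=\psi\lor\chi$ the occurrences split as $m(\phi)=m(\psi)+m(\chi)$, so the induction hypothesis together with \Cref{kripkeop} yields
\[
\Dim(\lVert\phi\rVert^{\bar p})\le\Dim(\lVert\psi\rVert^{\bar p})\cdot\Dim(\lVert\chi\rVert^{\bar p})\le d_k^{\,m(\psi)}\cdot d_k^{\,m(\chi)}=d_k^{\,m(\phi)}.
\]
For $\phi=\exists p_i\psi$ or $\phi=\forall p_i\psi$ the count is unchanged and \Cref{kripkeop} gives $\Dim(\lVert\phi\rVert^{\bar p})\le\Dim(\lVert\psi\rVert^{\bar p})\le d_k^{\,m(\psi)}=d_k^{\,m(\phi)}$.

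I expect the main obstacle to be nothing computational, but rather the bookkeeping around the primitive inclusion atom, whose dimension is not supplied by \Cref{dim_families} and must be pinned down by the quasi-upward-closure argument above; once that monotonicity observation is in place, every remaining step is a direct application of \Cref{kripkeop}, and the argument collapses to a transparent induction on formula structure.
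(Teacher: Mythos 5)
Your proposal is correct and follows essentially the same route as the paper: a structural induction on $\phi$ using the multiplicativity of the upper dimension under $\wedge$ and $\lor$ and its non-increase under quantifiers (\Cref{kripkeop}), with the base cases $\Dim=1$ for $\QPL$-formulas and $\Dim\le d_k$ for a single atom. Your explicit monotonicity check that atoms of arity $j\le k$ have dimension at most $d_k$ (including the separate quasi-upward-closure argument for $\subseteq^{\top\bot}$) is a point the paper leaves implicit, but it does not change the structure of the argument.
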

\begin{proof}
     We show by induction on the structure of the formula that for all $\ell\in\mathbb{N}$, if a formula $\psi\in\QPL(\circledast_k)$ contains $\ell$ appearances of $\circledast$-atoms, then 
$\Dim(\lVert\psi\rVert^{\bar{p}})\leq(\Dim(\lVert\circledast\rVert^{\bar{p}}))^\ell$.
     \begin{itemize}
         \item If $\psi\in\QPL$, then the formula $\psi$ does not contain any $\circledast$-atoms, and $\Dim(\lVert\psi\rVert^{\bar{p}})=1 \leq(\Dim(\lVert\circledast\rVert^{\bar{p}}))^0$. 
         \item If 
         $\psi$ is a $\circledast$ atom, then the formula $\psi$ contains one $\circledast$ atom, and $\Dim(\lVert\psi\rVert^{\bar{p}})=(\Dim(\lVert\circledast\rVert^{\bar{p}})) \leq(\Dim(\lVert\circledast\rVert^{\bar{p}}))^1$.
        \item Suppose that $\psi=\psi_1\circ\psi_2$, where $\circ\in\{\wedge,\lor\}$, and each formula $\psi_j$ contains $\ell_j$ appearances of $\circledast$-atoms and $\Dim(\lVert\psi_j\rVert^{\bar{p}})\leq (\Dim(\lVert\circledast\rVert^{\bar{p}}))^{\ell_j}$ for $j\in\{1,2\}$. Then the formula $\psi$ contains $\ell_1+\ell_2$ appearances of $\circledast$-atoms and by \Cref{kripkeop}, $\Dim(\lVert\psi\rVert^{\bar{p}})\leq (\Dim(\lVert\circledast\rVert^{\bar{p}}))^{\ell_1}\cdot(\Dim(\lVert\circledast\rVert^{\bar{p}}))^{\ell_2}=(\Dim(\lVert\circledast\rVert^{\bar{p}}))^{\ell_1+\ell_2}$.  
         \item Suppose that $\psi=Qp_i\psi_1$, where $Q\in\{\exists,\forall\}$, and the formula $\psi_1$ contains $\ell$ appearances of $\circledast$-atoms and by \Cref{kripkeop}, $\Dim(\lVert\psi_1\rVert^{\bar{p}})\leq (\Dim(\lVert\circledast\rVert^{\bar{p}}))^{\ell}$. Then the formula $\psi$ contains $\ell$ appearances of $\circledast$-atoms and $\Dim(\lVert\psi\rVert^{\bar{p}})\leq\Dim(\lVert\psi_1\rVert^{\bar{p}})\leq (\Dim(\lVert\circledast\rVert^{\bar{p}}))^{\ell}$. 
     \end{itemize}
     Since every formula $\phi\in\QPL(\circledast_k)_n$ contains at most $n$ appearances of at most $k$-ary dependence atoms, the original claim follows from the claim that was proved by induction.
\end{proof}

Next, we show that the translations in \Cref{reductionfmlas} 
for dependence and anonymity atoms are optimal in the sense that a $k+1$-ary dependence or anonymity atom cannot be expressed with a formula that has fewer than two $k$-ary dependence or anonymity atoms, respectively. 

\begin{theorem}\label{opti}
   For all $k\geq 0$ and $\circledast\in \{\dep(\dots),\Upsilon\}$, 
   \begin{enumerate}[label=(\roman*)]
       \item $\PL(\circledast_{k+1})_{n}\leq \PL(\circledast_{k})_{2n}$ for all $n$,
       \item  \label{opti item a}$\PL(\circledast_{k+1})_{n}\not\leq \PL(\circledast_{k})_m$ for all $n,m$ such that $0<n\leq m <2n$,
       \item $\PL(\circledast_{k})_{n}\leq \PL(\circledast_{0})_{n\cdot2^k}$ for all $n$,
       \item \label{opti item b}$\PL(\circledast_{k})_{n}\not\leq \PL(\circledast_{0})_{m}$ for all $n,m$ such that $0<n\leq m <n\cdot2^k$.
   \end{enumerate}
   Analogous claims also hold if quantification is allowed.
\end{theorem}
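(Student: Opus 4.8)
The plan is to separate the two ``upper-bound'' inclusions (i),(iii) from the two inexpressibility statements (ii),(iv), and to reduce the latter to a single multiplicativity fact about the upper dimension under conjunction over disjoint variables.

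For (i) and (iii) I would argue purely syntactically. Given $\phi\in\PL(\circledast_{k+1})_n$, replace each occurrence of a $(k+1)$-ary atom by the right-hand side of the relevant reduction formula in \Cref{reductionfmlas} \cref{reductionfmlas item dep} or \cref{reductionfmlas item anon}; each such atom becomes two atoms of arity $\le k$, while atoms of arity $\le k$ are left untouched, so the result lies in $\PL(\circledast_k)_{2n}$, giving (i). Iterating the reduction down to arity $0$ turns one $k$-ary atom into $2^k$ constancy (resp.\ non-constancy) atoms, so $n$ atoms of arity $\le k$ yield at most $n\cdot 2^k$ atoms of arity $0$, giving (iii). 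Since these reduction formulas are quantifier-free, the same substitutions prove the analogous statements when quantification is allowed.

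For the inexpressibility claims (ii),(iv) the idea is to exhibit one witness of maximal dimension and invoke \Cref{aritydim}. Let $\phi_n:=\bigwedge_{i=1}^{n}\circledast^{(i)}$ be the conjunction of $n$ copies of the atom, the $i$-th copy using a fresh variable block disjoint from the others (each copy $(k+1)$-ary for (ii), $k$-ary for (iv)). By \Cref{aritydim}, every $\psi\in\QPL(\circledast_k)_m$ satisfies $\Dim(\lVert\psi\rVert^{\bar p})\le(\Dim(\lVert\circledast_k\rVert^{\bar p}))^{m}$, and the dimension of a single atom was computed to be $2^{2^{k+1}}=(2^{2^{k}})^{2}$ in the $(k+1)$-ary case and $2^{2^{k}}$ in the $k$-ary case. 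Hence it suffices to prove the matching \emph{lower} bound $\Dim(\lVert\phi_n\rVert^{\bar p})=(\Dim(\lVert\circledast\rVert))^{n}$: then (ii) follows since $(2^{2^{k}})^{2n}\le(2^{2^{k}})^{m}$ forces $m\ge 2n$, and (iv) follows since $2^{n\cdot 2^{k}}\le 2^{m}$ forces $m\ge n\cdot 2^{k}$, contradicting the stated ranges. Because \Cref{aritydim} is already stated for $\QPL$ and quantifiers do not raise dimension (\Cref{kripkeop}), the same witness settles the quantified versions.

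The heart of the argument, and the step I expect to be hardest, is the multiplicativity $\Dim(\lVert\phi\wedge\psi\rVert)=\Dim(\lVert\phi\rVert)\cdot\Dim(\lVert\psi\rVert)$ when $\Var(\phi)$ and $\Var(\psi)$ are disjoint. By locality the relevant family is the marginal product $\mathcal{A}\bowtie\mathcal{B}:=\{T\subseteq X\times Y\mid \pi_1(T)\in\mathcal{A},\ \pi_2(T)\in\mathcal{B}\}$, and the inequality $\le$ is exactly \Cref{kripkeop}. For the dependence atom, which is downward closed, I would prove $\ge$ via \Cref{dc and uc prop}: the maximal teams of $\mathcal{A}\bowtie\mathcal{B}$ are precisely the products $M_A\times M_B$ of maximal teams, so $|\Max|$ multiplies, and the maximal teams of the $n$-fold conjunction are the functional teams $\{s\mid s(q_i)=f_i(s(\bar p_i))\text{ for all }i\}$, of which there are exactly $(2^{2^{k+1}})^{n}$. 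The delicate case is the anonymity atom, which is union closed, so $\Max$ is unavailable; here I would use \Cref{Zorn lemma} \cref{Zorn item def} (covering by convex shadows) together with the explicit critical rectangles $A\times Y$ of the anonymity family $\Fano$ identified in the proof of \Cref{dim_families}. The $n$-fold products of these rectangles dominate $\mathcal{A}\bowtie\cdots\bowtie\mathcal{B}$, and the key lemma is that if such a rectangle-product $R=S_1\times\cdots\times S_n$ lies in a shadow $\partial_C$, then adding any element of $C$ to $R$ must preserve the anonymity condition in each block, which forces $\pi_i(C)=S_i$ for every $i$; consequently no shadow contains two distinct rectangle-products, so covering all $(2^{2^{k+1}})^{n}$ of them needs that many shadows. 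This ``projection-forcing'' step is the main obstacle, since the top $C$ of a shadow need not itself be a rectangle and the combinatorics of $\Fano$ must be used to pin down its projections.
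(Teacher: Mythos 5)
Your handling of items (i) and (iii), and of the dependence-atom half of (ii) and (iv), coincides with the paper's proof: substitute the reduction formulas of \Cref{reductionfmlas} for the upper bounds, and for the lower bounds take the conjunction of $n$ atoms over pairwise disjoint variable blocks and compare its upper dimension with the bound of \Cref{aritydim}. For dependence atoms your multiplicativity claim is correct and is essentially what the paper does: the family is downward closed, its maximal teams are exactly the teams $\{s\mid s(q_i)=f_i(s(\bar p_i))\text{ for all }i\}$ for tuples of total functions $f_i$, and these are pairwise distinct and nonempty, so $|\Max|=(2^{2^{k+1}})^n$ and \Cref{dc and uc prop} gives the required lower bound.

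The gap is in the anonymity case, which is exactly the case the paper dismisses as ``analogous'' and which you rightly identify as the delicate step. Your projection-forcing lemma is correct for nonempty rectangle-products, but the count is not: whenever one factor $S_i=A_i\times Y$ has $A_i=\emptyset$, the whole product team is empty, so the distinct rectangle-products number $(2^{2^{k+1}}-1)^n+1$, not $(2^{2^{k+1}})^n$. This is not a repairable miscount. Writing $\pi_i$ for restriction to the $i$-th variable block, one checks that $[T,C]\subseteq\lVert\phi_n\rVert^{\bar p}$ holds iff $T\subseteq C$, $T\in\lVert\phi_n\rVert^{\bar p}$, and $\Dom(\pi_i(T))=\Dom(\pi_i(C))$ for every $i$; hence the convex shadows of critical sets are indexed by realizable domain profiles, are pairwise disjoint, and the upper dimension of the witness is \emph{exactly} $(2^{2^{k+1}}-1)^n+1$. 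So multiplicativity of $\Dim$ under disjoint-variable conjunction genuinely fails for anonymity atoms (the empty team is the culprit: for a downward closed family it lies in every shadow and costs nothing, while for $\Fano$ it is isolated and collapses all products with an empty factor). Consequently the lower bound drops below the threshold $(2^{2^{k}})^{2n-1}$ once $n$ is large relative to $2^{2^{k+1}}$: already for $k=0$, $n=3$, $m=5$ the witness has dimension $3^3+1=28$, while \Cref{aritydim} only caps formulas of $\PL(\Upsilon_0)_5$ at $2^5=32$, so no contradiction is obtained. The same difficulty is latent in the paper's one-line ``analogous'' remark; closing it requires either a different witness or a sharper upper bound than \Cref{aritydim} for conjunctions of anonymity atoms.
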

\begin{proof}
  We prove the first two claims for dependence atoms; the proof for anonymity atoms is analogous, using \Cref{reductionfmlas} \cref{reductionfmlas item anon}. The last two items follow straightforwardly from the proofs of the first two items.
\begin{enumerate} [label=(\roman*)]
    \item  Let $\phi\in\PL(\circledast_{k+1})_{n}$. By  \cref{reductionfmlas item dep} of \Cref{reductionfmlas}, each appearance of a $k+1$-ary dependence atom in $\phi$ can be replaced by a formula that contains two appearances of $k$-ary dependence atoms. Let  $\phi^*$ be a formula that is obtained by replacing the appearance in such a way. Since $\phi$ can contain at most $n$ appearances of $k+1$-ary dependence atoms, the formula $\phi^*$ can contain at most $2n$ appearances of $k$-ary dependence atoms, and hence $\phi^*\in\PL(\dep(\dots)_{k})_{2n}$.
    
    \item Let $n,m$ be as in \cref{opti item a} of the theorem. Define $\phi:=\bigwedge_{1\leq i\leq n}{\dep(p_{i_1}\dots p_{i_{k+1}};p_{i_j})}$, where none of the propositional variables appear more than once in the formula. Now $\phi\in\PL(\dep(\dots)_{k+1})_{n}$, and since $\PL(\dep(\dots)_{k+1})_{n}$ is downward closed, the upper dimension $\Dim(\lVert\phi\rVert^{\bar{p}})=|\Max(\lVert\phi\rVert^{\bar{p}})|$, i.e., the number of ways to pick an $n$-length sequence of functions from $\{0,1\}^{k+1}$ to $\{0,1\}$. Therefore, $\Dim(\lVert\phi\rVert^{\bar{p}})=(2^{2^{k+1}})^n$. Let $\psi\in\PL(\dep(\dots)_{k})_m$. Then by Theorem \ref{aritydim}, $\Dim(\lVert\psi\rVert^{\bar{p}})\leq(2^{2^{k}})^m<(2^{2^{k}})^{2n}=(2^{2^{k+1}})^n=\Dim(\lVert\phi\rVert^{\bar{p}})$, so it is impossible to express $\phi$ in $\PL(\dep(\dots)_{k})_m$. 
\end{enumerate}
Since Theorem \ref{aritydim} holds when quantifiers are allowed, it is easy to see that analogous claims also hold in that case.
\end{proof}
%

Consequently, the below translation (\ref{translation anonymity}) from \cite{yang2022} for anonymity atoms is optimal.
The translation uses the quasi upward closed unary \emph{non-constancy atoms} $\not\dep(q)$, which are semantically equivalent to $0$-ary anonymity atoms $\langle\rangle\Upsilon q$.  
Another translation (\ref{translation inclusion}) from \cite{yang2022} for propositional inclusion atoms uses the quasi upward closed  primitive inclusion atoms. 
%
%
%
%
For a sequence  $\bar{x}=x_1\dots x_k$, $x_i\in\{\top,\bot\}$ and $\bar{p}=p_1\dots p_k$, let $\bar{p}^{\bar{x}}$ be shorthand for $\bigwedge_{1\leq i\leq k} p_i^{x_i}$, where $p_i^{\top}=p_i$ and $p_i^{\bot}=\neg p_i$. We recall from \cite{yang2022},
\begin{align}\label{translation anonymity}
\bar{p}\ \Upsilon\ q\equiv &\bigvee_{\bar{x}\in\{\top,\bot\}^k} (\bar{p}^{\bar{x}}\land \not\dep(q)),\\
\label{translation inclusion}
    \bar{p}\subseteq \bar{q}\equiv &\bigwedge_{\bar{x}\in\{\top,\bot\}^{\bar{p}}}(\neg \bar{p}^{\bar{x}}\lor \bar{x}\subseteq \bar{q}).
\end{align}
We show that translation (\ref{translation inclusion}) is `optimal' for propositional inclusion atoms of arity $\geq 2$, in the sense that we cannot express a $k$-ary inclusion atom by a formula containing fewer than $2^k$ occurrences of $k$-ary primitive inclusion atoms. Note that in (\ref{translation inclusion}) the arity does not decrease but the primitive inclusion atoms on the right hand side of (\ref{translation inclusion}) are in principle simpler, being quasi upward closed, than the inclusion atoms on the left hand side of the equivalence.



\begin{theorem}\label{collapse prim inc}
   For all $k,n\geq 1$, 
   \begin{enumerate}[label=(\roman*)]
 
       \item $\PL(\subseteq_{k})_{n}\leq \PL(\subseteq^{\top\bot}_{k})_{n\cdot 2^{{k}}}$.
       \item \label{collaps prim item} $\PL(\subseteq_{k})_{1}\not\leq \PL(\subseteq^{\top\bot}_{k})_{m}$ for any $m< 2^k$ whenever $k\geq 2$. 

   \end{enumerate}
\end{theorem}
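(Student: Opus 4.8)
As in the proof of \Cref{opti}, I would take an arbitrary $\phi\in\PL(\subseteq_k)_n$ and replace each of its at most $n$ occurrences of a $k$-ary inclusion atom $\bar{p}\subseteq\bar{q}$ by the equivalent formula
\[
\bigwedge_{\bar{x}\in\{\top,\bot\}^k}\bigl(\neg\bar{p}^{\bar{x}}\lor\bar{x}\subseteq\bar{q}\bigr)
\]
supplied by (\ref{translation inclusion}). Each replacement introduces exactly $2^k$ primitive inclusion atoms $\bar{x}\subseteq\bar{q}$, all of arity $k$, while the remaining material $\neg\bar{p}^{\bar{x}}$ is $\PL$. Since an equivalent subformula may be substituted without changing the meaning of $\phi$, the resulting formula belongs to $\PL(\subseteq^{\top\bot}_k)_{n\cdot 2^k}$, which is (i).

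\textbf{Part (ii).} Here I would argue by upper dimension, feeding two values into \Cref{aritydim}. First, the $k$-ary inclusion atom has $\Dim(\lVert\bar{p}\subseteq\bar{q}\rVert^{\bar{p}\bar{q}})=2^{2^k}-2^k$, which is $\Dim(\Finc)$ with $|X|=2^k$ in \Cref{dim_families}. Second, I would compute the upper dimension of a $k$-ary primitive inclusion atom $\bar{x}\subseteq\bar{q}$: over $\bar{q}$ its team property is $\{\emptyset\}\cup\{T\mid s_{\bar{x}}\in T\}$, where $s_{\bar{x}}$ is the unique evaluation with $s_{\bar{x}}(\bar{q})=\bar{x}$. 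This family is quasi upward closed, nonempty with at least two members, and (since $|\mathcal{X}|=2^k\geq 2$) a proper subfamily of $P(\mathcal{X})$, so \Cref{trivial dim}\,\cref{trivial dim quasi} gives upper dimension $2$; by the variable-invariance of the upper dimension (\Cref{dim and locality prop}) this value is unchanged in any ambient tuple. Consequently \Cref{aritydim}, applied with $\circledast=\subseteq^{\top\bot}$, yields $\Dim(\lVert\psi\rVert^{\bar{p}\bar{q}})\leq 2^{m}$ for every $\psi\in\PL(\subseteq^{\top\bot}_k)_{m}$.

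It then remains to compare the bounds. For $m<2^k$ we have $2^m\leq 2^{2^k-1}$, and a short calculation shows $2^{2^k-1}<2^{2^k}-2^k$ precisely when $2^k>k+1$, i.e.\ for all $k\geq 2$. Hence for $k\geq 2$ and $m<2^k$,
\[
\Dim(\lVert\psi\rVert^{\bar{p}\bar{q}})\leq 2^m<2^{2^k}-2^k=\Dim(\lVert\bar{p}\subseteq\bar{q}\rVert^{\bar{p}\bar{q}}),
\]
so no $\psi\in\PL(\subseteq^{\top\bot}_k)_m$ can be equivalent to the atom $\bar{p}\subseteq\bar{q}\in\PL(\subseteq_k)_1$, which is (ii).

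The main, if modest, obstacle is choosing the right dimension for the primitive atom: although it is union closed and quasi upward closed---settings in which the dual upper dimension is the natural measure---the quantity actually controlled by \Cref{aritydim} is the ordinary upper dimension, and \Cref{trivial dim} pins this down at the small value $2$. It is exactly this smallness that caps the expressive power of $\PL(\subseteq^{\top\bot}_k)_m$ at upper dimension $2^m$, far below the $2^{2^k}-2^k$ of the genuine inclusion atom. The hypothesis $k\geq 2$ is essential, since at $k=1$ one has $2^{2^k}-2^k=2=2^{2^k-1}$, so the dimension gap closes and the argument gives nothing.
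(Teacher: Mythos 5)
Your proposal is correct and follows essentially the same route as the paper: part (i) by substituting the translation (\ref{translation inclusion}) atom-by-atom, and part (ii) by combining $\Dim(\lVert\bar{p}\subseteq\bar{q}\rVert^{\bar{p}\bar{q}})=2^{2^k}-2^k$ with the upper dimension $2$ of quasi upward closed primitive inclusion atoms via \Cref{trivial dim} and the bound $2^m$ from \Cref{aritydim}. Your additional checks (the hypotheses of \Cref{trivial dim}, the equivalence $2^{2^k-1}<2^{2^k}-2^k \iff 2^k>k+1$, and the failure at $k=1$) are correct refinements of the same argument.
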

\begin{proof}
  \begin{enumerate}[label=(\roman*)]
    \item Let $\phi\in\PL(\subseteq_{k})_{n}$ and replace each occurrence of a $k$-ary inclusion atom by  $2^{k}$ appearances of $k$-ary primitive inclusion atoms using the translation in (\ref{translation inclusion}) to obtain the formula $\phi^*$. Now $\phi^*$ contains at most $n\cdot 2^{k}$ appearances of $k$-ary primitive inclusion atoms, hence $\phi^*\in\PL(\subseteq^{\top\bot}_{k})_{n\cdot 2^{{k}}}$. 

    \item 
    Let $\phi\in\PL(\subseteq_{k})_{1}$ be a $k$-ary inclusion atom $\bar{p}\subseteq\bar{q}$ with distinct propositional symbols such that $\FV(\phi)\subseteq\Var(\bar{p}\bar{q})$. Now
    $\Dim(\lVert\phi\rVert^{\bar{p}\bar{q}})=(2^{2^{k}}-2^k)$. By \Cref{trivial dim} \cref{trivial dim quasi}, the upper dimension of quasi upward closed formulas, such as primitive inclusion atoms, is $2$. Suppose that there is a translation of the inclusion atom using $m< {2^{k}}$ occurrences of primitive inclusion atoms of arity at most $k$, and let $\phi^*$ be obtained by applying this translation to the inclusion atom in $\phi$. Now for all $k\geq 2$, $\Dim(\lVert\phi^*\rVert^{\bar{p}\bar{q}})\leq 2^m\leq2^{2^{k}-1}<2^{2^{k}}-2^k=\Dim(\lVert\phi\rVert^{\bar{p}\bar{q}})$. Hence, there is no such translation. 
    %

\end{enumerate}
\end{proof}




From \Cref{theoremk+1tok4,qexclu}, it follows that $\QPL(\subseteq_{k+1})_n\leq\QPL(\subseteq_k)_{4n}$  and $\QPL(\mid_{k+1})_n\leq\QPL(\mid_k)_{4n}$, respectively. Note that our translations use quantifiers, so we do not obtain analogous results for quantifier-free fragments. From the first item of the following theorem, it follows that a reduction formula for a binary inclusion/exclusion atom
must contain at least four unary inclusion/exclusion atoms, so in the binary case, the reduction formulas in \Cref{theoremk+1tok4,qexclu} are optimal with respect to the number of unary inclusion/exclusion atoms. For the other arities of inclusion atoms and for any exclusion atoms, we do not have such optimality results.

\begin{theorem} \label{opti inc exc}
Let $\circledast\in\{\subseteq,\mid\}$. We then have
\begin{itemize}
    \item[(i)] $\QPL(\circledast_{2})_n\not\leq\QPL(\circledast_1)_m$ for all $n,m$, s.t., $0<n\leq m\leq 3n$,
    \item[(ii)] $\QPL(\circledast_{k+1})_n\not\leq\QPL(\circledast_k)_m$ for all $k\geq 2$ and $n,m$, s.t., $0<n\leq m \leq 2n$.
\end{itemize}  
\end{theorem}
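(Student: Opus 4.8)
The plan is to prove all four statements by the same dimension-counting method used for \Cref{opti,collapse prim inc}, the only new ingredient being a bookkeeping that separates the role of a \emph{single} atom (which fixes the constants $4$ and $3$) from the role of the parameter $n$. Throughout I use that equivalent formulas induce the same team property and hence the same upper dimension, together with \Cref{aritydim}, which bounds any competitor $\psi\in\QPL(\circledast_k)_m$ by $\Dim(\lVert\psi\rVert^{\bar p})\le d^m$, where $d$ is the dimension of a single $k$-ary atom. From the corollary to \Cref{dim_families} I have the exact values $d=2^{2^k}-2^k$, $D=2^{2^{k+1}}-2^{k+1}$ for inclusion and $d=2^{2^k}-2$, $D=2^{2^{k+1}}-2$ for exclusion, where $D$ is the dimension of a single $(k+1)$-ary atom.

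First I would settle the case $n=1$, which pins down the constants. Taking $\phi$ to be a single $(k+1)$-ary $\circledast$-atom on distinct variables gives $\Dim(\lVert\phi\rVert)=D$, while any $\psi\in\QPL(\circledast_k)_m$ satisfies $\Dim(\lVert\psi\rVert)\le d^m$. A short arithmetic check exhibits the dichotomy behind the two constants: for $k=1$ one has $d=2$ and $D\in\{12,14\}$, so $d^{3}=8<D\le 16=d^{4}$, forcing at least $4$ unary atoms; for $k\ge 2$ one instead has $d^{2}<D<d^{3}$ (indeed $D-d^{2}=2^{2^k+k+1}-2^{k+1}-2^{2k}>0$ for inclusion and $D-d^{2}=2^{2^k+2}-6>0$ for exclusion, while $D\ll d^3$), forcing at least $3$ atoms. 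This is exactly the $n=1$ instance of both parts, and it is the source of the $4n$ in (i) and the $3n$ in (ii).

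To reach general $n$ I would take as witness the conjunction $\phi:=\bigwedge_{i=1}^{n}\alpha_i$ of $n$ copies of the $(k+1)$-ary atom on pairwise disjoint variable blocks $V_1,\dots,V_n$. For exclusion $\phi$ is downward closed, so $\Dim(\lVert\phi\rVert)=\lvert\Max(\lVert\phi\rVert)\rvert$, and since the blocks are disjoint the maximal teams are exactly the products of local maximal teams, giving $\Dim(\lVert\phi\rVert)=D^{n}$; for inclusion $\phi$ is only union closed and \Cref{kripkeop} yields merely $\Dim(\lVert\phi\rVert)\le D^n$. Crucially, a purely global comparison of $\Dim(\lVert\phi\rVert)$ with $d^m$ is too weak here: since $\log_d D<4$ (resp. $<3$), the inequality $d^m<D^n$ fails once $m$ approaches $4n$ (resp. $3n$) for large $n$, so the count must be localized. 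The idea is to fix, for each $i$, the variables outside $V_i$ to constants chosen so that every $\alpha_j$ with $j\ne i$ becomes true (for an inclusion atom take equal constants on the two argument tuples); under this specialization $\phi$ collapses to $\alpha_i$, and $\psi$ collapses to a formula over $V_i$ equivalent to $\alpha_i$ whose $\circledast_k$-atoms are those atoms of $\psi$ meeting $V_i$. The $n=1$ bound applied blockwise then forces at least $c$ genuine $k$-ary atoms to survive in each block, and disjointness of the blocks is meant to make these contributions add up to $m\ge cn$, with $c=4$ in (i) and $c=3$ in (ii).

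The main obstacle is precisely this localized step, where two points need care. First, specializing the variables outside $V_i$ can degenerate a $k$-ary atom into a simpler object: an inclusion atom with a constant argument becomes a primitive inclusion atom or even a flat $\PL$-condition, so one must argue that such degenerate atoms have dimension no larger than $d$ and that the surviving formula still falls under the $n=1$ lower bound. Second, and more delicately, an atom of $\psi$ may involve variables from several blocks at once; to avoid undercounting one must ensure that such shared atoms cannot simultaneously supply the required $c$ atoms to two different blocks, which is where the boundedness of an atom's arity (only finitely many variable slots) has to be exploited. Making this additivity rigorous, rather than relying on the inadequate global dimension bound, is the crux; the inclusion case is the harder one, both because $\Dim(\lVert\phi\rVert)$ is only sub-multiplicatively controlled and because inclusion atoms degenerate in more ways under specialization.
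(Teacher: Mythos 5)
Your $n=1$ analysis coincides with the paper's core argument: the witness is a single $(k+1)$-ary atom, the competitor is bounded by $d^m$ via \Cref{aritydim}, and the arithmetic $d^3<D\le d^4$ (for $k=1$) and $d^2<D<d^3$ (for $k\ge 2$) is exactly what fixes the constants $4$ and $3$. For general $n$ the paper uses the same witness $\phi=\bigwedge_{i=1}^n\alpha_i$ on pairwise disjoint variable blocks, but it computes $\Dim(\lVert\phi\rVert^{\bar p})=D^n$ \emph{exactly}; for inclusion this is done by showing that the family of Cartesian products of the blockwise dominating families is the smallest dominating family, ``analogously to the case of $\Finc$''. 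Your proposal only records the upper bound $\Dim(\lVert\phi\rVert^{\bar p})\le D^n$ from \Cref{kripkeop}, which is the wrong direction: the argument needs a matching \emph{lower} bound on the witness's dimension, and you do not supply one for the inclusion case. The paper then concludes by the plain global comparison of $d^m$ with $D^n$.

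The larger problem is that your general-$n$ argument is a plan, not a proof. You correctly observe that the global comparison $d^m<D^n$ cannot cover the whole stated range for large $n$ (since $\log_d D<4$, resp.\ $<3$), and you propose to localize the count block by block; but the two decisive steps --- that the formula obtained by specializing the variables outside $V_i$ still requires $c$ surviving $k$-ary atoms, and that the per-block requirements add up to $m\ge cn$ --- are exactly the steps you leave open. The additivity step in particular faces a concrete obstacle that you name but do not overcome: a $k$-ary atom of $\psi$ has $2k$ variable slots and may meet two or more blocks, so ``every block needs $c$ atoms'' yields only $m\ge cn/2$ by naive double counting, strictly weaker than the claim. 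Since this is where your proof would have to diverge from the paper's, and it is precisely the part you defer, the proposal has a genuine gap. (Your diagnosis does put a finger on a real tension: the paper's own displayed inequalities, $2^m\le 2^{3n}$ in item (i) and $d^m\le d^{2n}$ in item (ii), are instances of the global comparison and presuppose $m\le 3n$, resp.\ $m\le 2n$, rather than the stated $m<4n$, resp.\ $m<3n$. But identifying the difficulty is not the same as resolving it.)
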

\begin{proof}
We prove the first item for inclusion and the second for exclusion. The remaining cases for the items are analogous.
    \begin{itemize}
        \item[(i)] Let $n,m$ be as in item (i) of the theorem. Define $\phi:=\bigwedge_{1\leq i\leq n}{p_{i_1} p_{i_2}\subseteq p_{i_3}p_{i_4}}$, where none of the propositional variables appears more than once in the formula. Now $\phi\in\QPL(\subseteq_{2})_{n}$, and the upper dimension $\Dim(\lVert\phi\rVert^{\bar{p}})$ can be calculated using the observation that since each propositional symbol appears only once, the smallest family that dominates $\lVert\phi\rVert^{\bar{p}}$ is obtained by taking the family of Cartesian products $\mathcal{G}:=\{\prod_{1\leq i\leq n} G_i: G_i\in\mathcal{G}_i\text{ for all } i\}$, where each $\mathcal{G}_i$ is the smallest family dominating $\lVert{p_{i_1} p_{i_2}\subseteq p_{i_3}p_{i_4}}\rVert^{\bar{p}}$. The fact that this is the smallest dominating family can be checked analogously the case of $\Finc$. Therefore, $\Dim(\lVert\phi\rVert^{\bar{p}})=(2^{2^{2}}-2^2)^n=12^n$. Let $\psi\in\QPL(\subseteq_1)_m$. Then by Theorem \ref{aritydim}, $\Dim(\lVert\psi\rVert^{\bar{p}})\leq(2^{2^{1}}-2^1)^m=2^{m}\leq 2^{3n}=8^n<12^n=\Dim(\lVert\phi\rVert^{\bar{p}})$, so it is impossible to express $\phi$ in $\QPL(\subseteq_1)_m$.
        \item[(ii)] Let $k,n,m$ be as in item (ii) of the theorem. Define $\phi:=\bigwedge_{1\leq i\leq n}{p_{i_1}\dots p_{i_{k+1}}\mid p_{i_1}'\dots p'_{i_{k+1}}}$, where none of the propositional variables appear more than once in the formula. Now $\phi\in\QPL(\mid_{k+1})_{n}$, and the upper dimension $\Dim(\lVert\phi\rVert^{\bar{p}})=|\Max(\lVert\phi\rVert^{\bar{p}})|$, i.e., the number of ways to pick an $n$-length sequence of sets from $\mathcal{P}(\{0,1\}^{k+1})\setminus\{\emptyset,\{0,1\}^{k+1}\}$. Therefore, $\Dim(\lVert\phi\rVert^{\bar{p}})=(2^{2^{k+1}}-2)^n$. Let $\psi\in\QPL(\mid_k)_m$. Then by Theorem \ref{aritydim}, $\Dim(\lVert\psi\rVert^{\bar{p}})\leq(2^{2^{k}}-2)^m\leq(2^{2^{k}}-2)^{2n}=(2^{2^{k+1}}-4(2^{2^k}-1))^{n}<(2^{2^{k+1}}-2)^n=\Dim(\lVert\phi\rVert^{\bar{p}})$, so it is impossible to express $\phi$ in $\QPL(\mid_k)_m$.
    \end{itemize}
\end{proof}


Note that in \Cref{opti} \cref{opti item a,opti item b}, as well as in \Cref{collapse prim inc} \cref{collaps prim item}, the stronger statements with $\PL$ replaced by $\QPL$ on the right-hand side of the inequality hold. Similarly, in both items of \Cref{opti inc exc}, the stronger statements obtained by replacing $\QPL$ with $\PL$ on the left-hand side of the inequality also hold.

\section{Additional dimension calculations}
\label{section 6}

In \Cref{genatoms}, we present calculations for the upper dimension of various extended propositional atoms. In \Cref{subsec: trs quasi}, we focus on the dual upper dimension of some (quasi) upward closed atoms from the literature.

\subsection{Extended atoms}\label{genatoms}

Extended atoms, introduced in \cite{Ebbing2013}, allow formulas in the sequences of `variables', and can, in certain settings, increase the expressivity further than their propositional counterparts. This is the case for extended inclusion atoms, used in \cite{yang2022} and \cite{Anttila}, and for exclusion atoms of the form $\top p\mid p\bot$ as observed in \Cref{expr compl exc inc}. 
%
In the case of ${\PL(\dep(\dots))}$ and $\PL(\Upsilon)$, extending the syntax with the respective extended variants do not increase the expressivity, since propositional dependence and anonymity logic are already expressively complete logics \cite{MR3488885,yang2022}.



Let $\bar{\alpha}$ and $\bar{\beta}$ be sequences of $\PL$-formulas. The semantic clauses for the extended dependence, anonymity, inclusion and exclusion atoms are presented below. 
\begin{align*}
    T\models\ \dep(\bar{\alpha};\beta) \text{ iff}& \text{ for all } s,s'\in T  \text{ such that } s(\bar{\alpha})=s'(\bar{\alpha}),\text{ we have } s(\beta)=s'(\beta). \\
T\models\bar{\alpha}\Upsilon\beta \text{ iff}& \text{ for all }  s\in T, \text{ there is } s'\in T  \text{ such that } s(\bar{\alpha})= s'(\bar{\alpha}) \text{ and } \\
&s(\beta)\neq s'(\beta). \\
T\models\bar{\alpha}\subseteq\bar{\beta}\text{ iff}& \text{ for all }  s\in T,  \text{ there is }  s'\in T \text{ such that }  s(\bar{\alpha})= s'(\bar{\beta}). \\
T\models\bar{\alpha}\mid\bar{\beta}\text{ iff}& \text{ for all } s,s'\in T, s(\bar{\alpha})\neq s'(\bar{\beta}). 
\end{align*}
    
Note that allowing extended $\circledast$-atoms in $\QPL(\circledast)$ for any $\circledast\in\{{\dep(\dots),}{\Upsilon,} {\subseteq,} {\mid\}}$ does not increase the expressivity of the logic, since Theorem \ref{expr compl exc inc} shows that allowing quantification makes the corresponding logic expressively complete also in the cases of inclusion and exclusion. 
This can also be seen by a direct translation constructed as follows.
Let $\alpha,\beta\in\PL$. We write $\alpha\leftrightarrow\beta$ as an abbreviation for the formula $(\alpha\wedge\beta)\lor(\neg\alpha\wedge\neg\beta)$. Then an extended $\circledast$ atom $\phi(\alpha_1,\dots,\alpha_k)$ can be expressed as $\exists q_1\dots\exists q_m((\bigwedge_{1\leq i\leq m}(q_i\leftrightarrow\alpha_i)) \wedge\phi(q_1,\dots,q_m))$, where $q_1,\dots,q_m$ are propositional variables that do not appear in $\phi(\alpha_1,\dots,\alpha_k)$. For example, the extended dependency atom  $\dep(\alpha_1\dots \alpha_k;\beta)$ can be exressed as $\exists q_1\dots\exists q_k\exists u((\bigwedge_{1\leq i\leq k}(q_i\leftrightarrow\alpha_i))\wedge (u\leftrightarrow\beta)\wedge\dep(q_1\dots q_k;u))$.

Recall that for a normal $k$-ary dependence atom (with propositional variables instead of general formulas), the upper dimension, i.e., the number of maximal teams that satisfy the atom, is the number of all functions from $\{0,1\}^k$ to $\{0,1\}$. 
Consider then 
the extended atom $\theta:=\ \dep(\alpha_1\dots \alpha_k;\beta)$
. As the formula $\theta$ is downward closed, the upper dimension is again the number of maximal teams that satisfy the atom. Since $\alpha_1\dots \alpha_k,\beta$ are formulas instead of propositional variables, 
some of the functions might not be available because the evaluations that would correspond to them are not possible for the formulas. If e.g. $\beta=p_1\wedge\neg p_1$, then clearly $s(\beta)=0$ for all $s\in\Eval$, and therefore every assignment evaluates the formulas $\alpha_1,\dots,\alpha_k,\beta$ such that the maximal satisfying team corresponds to the constant function 0. (Extended) anonymity atoms are not downward closed, but analogous observations hold for the critical sets. We formulate these observations in the theorem below.

\begin{theorem}\label{genFD}
    Let $\theta$ be the atom $\dep(\alpha_1\dots \alpha_k;\beta)$ or the atom $\alpha_1\dots \alpha_k\Upsilon\beta$, where $\alpha_1\dots \alpha_k,\beta\in\PL$, and $\Var(\alpha_1)\cup\dots\cup\Var(\alpha_k)\cup\Var(\beta)=
    \Var(\bar{p})$.
    Define 
%
%
  \[
  A = \left\{ (x_1,\dots,x_k)\in\{0,1\}^{k} \ \middle\vert \begin{array}{l}
      \exists x_{k+1}\in\{0,1\}\text{ s.t. } \forall s\in\Eval\\ s(\alpha_1^{x_1}\wedge\dots\wedge\alpha_k^{x_k}\wedge\beta^{x_{k+1}})=0  
  \end{array}\right\},
\]
    where $\alpha^{1}=\alpha$ and $\alpha^{0}=\neg\alpha$. Let $m=2^k-|A|.$ Then $\Dim(\lVert\theta\rVert^{\bar{p}})= 2^m$.
\end{theorem}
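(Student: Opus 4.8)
The plan is to analyze both atoms through a common decomposition of $\Eval^{\bar p}$ into columns indexed by the possible $\bar\alpha$-values. For each $x=(x_1,\dots,x_k)\in\{0,1\}^k$ set $C_x:=\{s\in\Eval^{\bar p}\mid s(\bar\alpha)=x\}$ and split it by the value of $\beta$ into $C_x^0$ and $C_x^1$. The first observation I would record is that $x\in A$ precisely when at least one of $C_x^0,C_x^1$ is empty: indeed $\alpha_1^{x_1}\wedge\dots\wedge\alpha_k^{x_k}\wedge\beta^{x_{k+1}}$ is unsatisfiable exactly when no evaluation realizes the projection $(x,x_{k+1})$. Hence the \emph{good} columns, those $x$ for which both $C_x^0$ and $C_x^1$ are nonempty, are exactly the $x\notin A$, and there are $m=2^k-|A|$ of them.

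For the dependence atom I would use that $\lVert\theta\rVert^{\bar p}$ is downward closed, so by \Cref{dc and uc prop} it suffices to count $\Max(\lVert\theta\rVert^{\bar p})$. A team satisfies $\dep(\bar\alpha;\beta)$ iff inside every column it lies entirely in $C_x^0$ or entirely in $C_x^1$; a maximal such team is obtained by choosing, for each nonempty column, one side and taking all of it. A good column offers two choices, a column with exactly one nonempty side forces its choice, and an empty column contributes nothing, so distinct choice functions on the $m$ good columns yield distinct incomparable maximal teams. This gives $|\Max(\lVert\theta\rVert^{\bar p})|=2^m$.

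For the anonymity atom the family is union closed rather than downward closed, so I would instead work with the dominating/critical-set description of the upper dimension from \Cref{Zorn lemma}\,\cref{Zorn item}, exactly as in the computation of $\Dim(\Fano)$ in \Cref{dim_families}. A team satisfies $\bar\alpha\Upsilon\beta$ iff every column it meets is met on both sides; in particular it must avoid every bad column. The candidate critical teams are the full supports $T_B:=\bigcup_{x\in B}C_x$ for $B\subseteq\{0,1\}^k\setminus A$, of which there are $2^m$. The key computation is the convex shadow $\partial_{T_B}(\lVert\theta\rVert^{\bar p})=\{S\subseteq T_B\mid S\cap C_x^0\neq\emptyset\text{ and }S\cap C_x^1\neq\emptyset\text{ for all }x\in B\}$: if $S$ already meets both sides of each column of $B$ then every team between $S$ and $T_B$ still does, while if $S$ misses a side of some $x\in B$ then adjoining a single opposite-side element of that column breaks anonymity. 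From this I would read off that each anonymity team $T$ lies in $\partial_{T_B}$ for $B$ its set of touched columns, so the $T_B$ dominate; that each $T_B$ is critical and lies in the shadow of no $T_{B'}$ with $B'\neq B$; and hence that the minimal dominating family has exactly $2^m$ members, giving $\Dim(\lVert\theta\rVert^{\bar p})=2^m$.

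The main obstacle I anticipate is the anonymity case, and specifically the shadow identity together with the claim that no $T_B$ can be dropped. Handling it requires care because, unlike the abstract family $\Fano$, the cells $C_x^y$ here may contain many evaluations, so one must argue in terms of which sides of which columns are met rather than about single pairs; showing that a team that is only partially present on some column has a strictly smaller shadow than the corresponding $T_B$, and is therefore not critical, is the delicate point. The dependence case, by contrast, is a routine count once downward closure is invoked.
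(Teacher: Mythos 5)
Your proposal is correct and follows essentially the same route as the paper: for the dependence atom it invokes downward closure and counts maximal teams via a choice of one side per good column, and for the anonymity atom it adapts the critical-set/dominating-family computation of $\Dim(\Fano)$ to the columns indexed by $\{0,1\}^k\setminus A$. Your treatment of the anonymity case is somewhat more explicit than the paper's (which simply defers to the $\Fano$ argument), correctly noting that one must reason about which sides of which columns a team meets rather than about individual pairs, but this is a filling-in of detail rather than a different method.
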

\begin{proof}
  Consider first the case that $\theta$ is the atom $\dep(\alpha_1\dots \alpha_k;\beta)$. The formula $\theta$ is downward closed, so $\Dim(\lVert\theta\rVert^{\bar{p}})=|\Max(\lVert\theta\rVert^{\bar{p}})|$. Define the set $A$ as in the theorem, and let $(x_1,\dots,x_k)\in A$. If there are no $s\in\Eval$ such that $s(\alpha_i)=x_i$ for all $1\leq i\leq k$, then $(x_1,\dots,x_k)$ cannot be included in the domain of the functions that we are counting. Suppose then that $s\in\Eval$ is such that $s(\alpha_i)=x_i$ for all $1\leq i\leq k$, and let $s(\beta)=x_{k+1}$. Then $s(\alpha_1^{x_1}\wedge\dots\wedge\alpha_k^{x_k}\wedge\beta^{x_{k+1}})=1$, and by the definition of $A$, it must be that $s'(\alpha_1^{x_1}\wedge\dots\wedge\alpha_k^{x_k}\wedge\beta^{1-x_{k+1}})=0$ for all $s\in\Eval$. Let $s'\in\Eval$ be such that $s'(x_i)=s(x_i)$ for all $1\leq i\leq k$. Then $s'(\alpha_1^{x_1}\wedge\dots\wedge\alpha_k^{x_k})=1$. Since $s'(\alpha_1^{x_1}\wedge\dots\wedge\alpha_k^{x_k}\wedge\beta^{1-x_{k+1}})=0$, it must be that $s'(\beta^{1-x_{k+1}})=0$, i.e., $s'(\beta)=x_{k+1}=s(\beta)$. This means that any function must always evaluate $(x_1,\dots,x_k)$ as $x_{k+1}$, and we can ignore the tuple $(x_1,\dots,x_k)$ when we count the number of different functions. Hence $m=2^k-|A|$ represents the number of $k$-tuples for which both 0 or 1 are options for values of a function. Then the number of possible functions is $2^m$, i.e., $\Dim(\lVert\theta\rVert^{\bar{p}})= 2^m$ as wanted.

The case for the atom $\alpha_1,\dots,\alpha_k\Upsilon\beta$, is similar, but instead of $\Max(\lVert\theta\rVert^{\bar{p}})$, we consider the set $\Crit(\lVert\theta\rVert^{\bar{p}})$. The proof for the claim $\Dim(\lVert\theta\rVert^{\bar{p}})=|\Crit(\lVert\theta\rVert^{\bar{p}})|$ can then be done as in Theorem \ref{dim_families} Item (ii) by letting $X=\{0,2\}^k\setminus A$ and $Y=\{0,1\}$. 
\end{proof}
The following claims are straightforward consequences of the above theorem.
\begin{corollary}
   
   Let $\theta$ be as in Theorem \ref{genFD}. Then the following hold.
      \begin{enumerate}[label=(\roman*)] 
     \item  $\Dim(\lVert\theta\rVert^{\bar{p}})\leq 2^{2^k}$.
    \item If each $\alpha_1,\dots,\alpha_k,\beta$ is contingent, and $\Var(\alpha_1),\dots,\Var(\alpha_k),\Var(\beta)$ are pair-wise disjoint, then $\Dim(\lVert\theta\rVert^{\bar{p}})=2^{2^k}$.
    \item If $\alpha_i$ is a tautology or a contradiction, then $\Dim(\lVert\theta\rVert^{\bar{p}})$ is $\Dim(\lVert{\dep(\alpha_1\dots\alpha_{i-1}\alpha_{i+1}\dots \alpha_k;\beta)}\rVert^{\bar{p}})$ or $\Dim(\lVert{\alpha_1\dots\alpha_{i-1}\alpha_{i+1}\dots \alpha_k\Upsilon\beta}\rVert^{\bar{p}})$, depending on whether $\theta$ is $\dep(\alpha_1\dots \alpha_k;\beta)$ or $\alpha_1\dots \alpha_k\Upsilon\beta$.
    \item[(iv)] If $\beta$ is a tautology or a contradiction, then $\Dim(\lVert\theta\rVert^{\bar{p}})=1$.
\end{enumerate} 
\end{corollary}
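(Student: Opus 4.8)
The plan is to read every item directly off \Cref{genFD}, which already reduces each computation to determining the size of the forbidden set $A$ via the single identity $\Dim(\lVert\theta\rVert^{\bar{p}})=2^{\,2^{k}-|A|}$. Since $A$ is defined purely in terms of the satisfiability of the literal-conjunctions $\alpha_1^{x_1}\wedge\dots\wedge\alpha_k^{x_k}\wedge\beta^{x_{k+1}}$, it does not depend on whether $\theta$ is the dependence atom or the anonymity atom, so all four items can be argued uniformly and reduce to questions about $|A|$. Item (i) is then immediate: $|A|\geq 0$ forces $m=2^{k}-|A|\leq 2^{k}$, hence $\Dim(\lVert\theta\rVert^{\bar{p}})=2^{m}\leq 2^{2^{k}}$.

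For item (ii), the goal is to show $A=\emptyset$. I would fix an arbitrary $(x_1,\dots,x_k)\in\{0,1\}^{k}$ and an arbitrary $x_{k+1}\in\{0,1\}$, and produce a single $s\in\Eval$ satisfying $\alpha_1^{x_1}\wedge\dots\wedge\alpha_k^{x_k}\wedge\beta^{x_{k+1}}$. Each literal $\alpha_i^{x_i}$ (and likewise $\beta^{x_{k+1}}$) is satisfiable because $\alpha_i$ (respectively $\beta$) is contingent; since $\Var(\alpha_1),\dots,\Var(\alpha_k),\Var(\beta)$ are pairwise disjoint, a witnessing evaluation can be chosen on each variable block independently and the blocks glued into one $s$. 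Thus no tuple can lie in $A$, so $m=2^{k}$ and $\Dim(\lVert\theta\rVert^{\bar{p}})=2^{2^{k}}$.

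For items (iii) and (iv), the idea is to track which tuples a constant formula forces into $A$. In (iii), if $\alpha_i$ is a tautology, then $\alpha_i^{0}=\neg\alpha_i$ is unsatisfiable, so every tuple with $x_i=0$ makes the conjunction identically $0$ and lies in $A$; for tuples with $x_i=1$ the conjunct $\alpha_i^{1}=\alpha_i$ is always true and may be dropped, so the defining condition for $A$ becomes exactly that of the set $A'$ for the reduced atom. Deleting the $i$-th coordinate is then a bijection between the complement of $A$ and the complement of $A'$, giving $m=2^{k}-|A|=2^{k-1}-|A'|=m'$ and hence equal dimensions; the contradiction case is symmetric, with $x_i=0$ and $x_i=1$ interchanged. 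For (iv), if $\beta$ is a tautology I would take $x_{k+1}=0$ as the witness in the definition of $A$: then $\beta^{0}=\neg\beta$ is unsatisfiable, the conjunction is identically $0$ for every $(x_1,\dots,x_k)$, so $A=\{0,1\}^{k}$, $m=0$, and $\Dim(\lVert\theta\rVert^{\bar{p}})=2^{0}=1$; if $\beta$ is a contradiction the same works with witness $x_{k+1}=1$.

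These are all genuinely routine once \Cref{genFD} is in hand, so there is no serious obstacle; the only points demanding care are the gluing step in (ii), which relies essentially on the disjointness of the variable blocks to combine the partial evaluations, and the bookkeeping in (iii), where one must keep the quantifier structure of $A$ straight (the existential over $x_{k+1}$ and the universal over $s\in\Eval$) and check that the complement-preserving bijection is precisely the deletion of the $i$-th coordinate.
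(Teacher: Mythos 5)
Your proposal is correct and follows exactly the route the paper intends: the paper offers no explicit proof, stating only that the items are straightforward consequences of Theorem \ref{genFD}, and your derivation via the identity $\Dim(\lVert\theta\rVert^{\bar{p}})=2^{2^k-|A|}$ together with the case analysis of $|A|$ is precisely that intended argument. The only detail worth making explicit in item (iii) is that the dimension of the reduced $(k-1)$-ary atom is computed over a possibly smaller variable tuple, which is harmless by the locality property of the upper dimension (Proposition \ref{dim and locality prop}).
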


The situation is similar for extended exclusion atoms, which are downward closed like extended dependence atoms. For the normal $k$-ary exclusion atom the maximal satisfying teams are of the form $A\times(\{0,1\}^k\setminus A)$, where $\emptyset\neq A\subsetneq \{0,1\}^k$. For an extended atom, some of these maximal teams might be impossible to construct because some truth values are not possible for the formulas.
For example, consider the extended atom $p_1\mid  p_2\wedge\neg p_2$ and the set $A=\{0\}\subseteq\{0,1\}$. Now $A\times(\{0,1\}\setminus A)=\{(0,1)\}$. The tuple $(0,1)$ can be viewed as the evaluation where the first element of the tuple corresponds to the truth evaluation of the formula $p_1$ and the second element to that of the formula $p_2\wedge\neg p_2$. This evaluation does not satisfy the extended exclusion atom, because the truth value of the formula $p_2\wedge\neg p_2$ can only be 0. We formulate these observations in the theorem below.
\begin{theorem}
    Let $\theta:=\alpha_1\dots \alpha_k\mid\beta_1\dots \beta_k$, where $\alpha_1,\dots,\alpha_k,\beta_1,\dots,\beta_k\in\PL$, and $\bigcup_{1\leq i\leq k}\Var(\alpha_i)\cup\bigcup_{1\leq i\leq k}\Var(\beta_i)=
     \Var(\bar{p})$. Let $\{P_j\mid j\in\{1,\dots,2^{2^k}-2\}\}=\mathcal{P}(\{0,1\}^k)\setminus\{\emptyset,\{0,1\}^k\}$, and define for each $j$,
    the set $A_j=P_j\times(\{0,1\}^k\setminus P_j)$. Let
\begin{align*}
     B &= \Max\left(\left\{ D\subseteq A_j \ \middle\vert \begin{array}{l}
    j\in\{1,\dots,2^{2^k}-2\},
     \forall\bar{x}\bar{y}\in A_j, \bar{x}\bar{y}\in D \text { iff }\exists s\in\Eval, \\ s(\bigwedge_{1\leq i\leq k}\alpha_i^{x_i}\wedge\bigwedge_{1\leq i\leq k}\beta_i^{y_i})=1  
  \end{array}\right\}\right)
\end{align*}
     where $\alpha^{1}=\alpha=$ and $\alpha^{0}=\neg\alpha$. 
   Then 
$\Dim(\lVert\theta\rVert^{\bar{p}})=|B|$.
\end{theorem}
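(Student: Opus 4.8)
The plan is to reduce the dimension computation to a count of maximal satisfying teams and then match those teams against the sets described in the definition of $B$. Since the extended exclusion atom $\theta$ is downward closed, \Cref{dc and uc prop} gives $\Dim(\lVert\theta\rVert^{\bar p}) = |\Max(\lVert\theta\rVert^{\bar p})|$, so it suffices to produce a bijection between $\Max(\lVert\theta\rVert^{\bar p})$ and $B$. Throughout I abbreviate $D_j := \{(\bar x,\bar y)\in A_j \mid (\bar x,\bar y)\text{ is realizable}\}$, where I call $(\bar x,\bar y)$ \emph{realizable} when $\exists s\,(s(\bigwedge_{1\le i\le k}\alpha_i^{x_i}\wedge\bigwedge_{1\le i\le k}\beta_i^{y_i})=1)$; the family whose $\Max$ defines $B$ is then exactly $\{D_j \mid j\}$.

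First I would introduce the \emph{type} of an evaluation, $\tau(s) := (s(\bar\alpha), s(\bar\beta)) \in \{0,1\}^k \times \{0,1\}^k$, and set $\tau(T) := \{\tau(s)\mid s\in T\}$. The semantics of $\theta$ says precisely that $T\models\theta$ iff $\pi_1(\tau(T))\cap\pi_2(\tau(T)) = \emptyset$, where $\pi_1,\pi_2$ are the two coordinate projections; in particular satisfaction depends only on $\tau(T)$. The key structural observation is that a satisfying team may be \emph{saturated}: adjoining any evaluation whose type already occurs leaves $\tau(T)$, and hence satisfaction, unchanged. Consequently every maximal satisfying team $T$ equals the full preimage $\tau^{-1}(S)$ of a set $S$ of realizable types with $\pi_1(S)\cap\pi_2(S)=\emptyset$ (call such an $S$ \emph{admissible}), and conversely $\tau^{-1}(S)$ is a satisfying team for each admissible $S$, with $\tau(\tau^{-1}(S))=S$. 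This yields a bijection between $\Max(\lVert\theta\rVert^{\bar p})$ and the $\subseteq$-maximal admissible type-sets.

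It then remains to identify the maximal admissible sets with $B$. Given an admissible $S$, I would put $P^\ast := \pi_1(S)$; admissibility forces $\pi_2(S) \subseteq \{0,1\}^k\setminus P^\ast$, whence $S$ is contained in the realizable part of $P^\ast\times(\{0,1\}^k\setminus P^\ast)$. For nonempty $S$ the set $P^\ast$ is nonempty and cannot be all of $\{0,1\}^k$ (otherwise $\pi_2(S)=\emptyset$, contradicting $S\neq\emptyset$), so $P^\ast = P_j$ for a unique $j$ and $S\subseteq D_j$. Each $D_j$ is itself admissible, since $\pi_1(D_j)\subseteq P_j$ and $\pi_2(D_j)\subseteq\{0,1\}^k\setminus P_j$ are disjoint and all its members are realizable; hence maximality of $S$ forces $S = D_j$. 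Conversely a $D_j$ is a maximal admissible set exactly when it is $\subseteq$-maximal among the $D_{j'}$, because any admissible set strictly containing it is itself contained in some $D_{j'}$, which would then strictly contain $D_j$. Therefore the maximal admissible sets are precisely the members of $B = \Max(\{D_j\})$, and composing with the bijection of the previous paragraph gives $\Dim(\lVert\theta\rVert^{\bar p}) = |B|$.

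The main obstacle I anticipate is the admissible-set reduction, specifically verifying cleanly that a $\subseteq$-maximal admissible type-set must \emph{coincide} with some $D_j$ rather than merely sit inside one, together with the bookkeeping for degenerate cases: the empty type-set, and types with $\bar x=\bar y$, which can never belong to an admissible set and so must be excluded from every $D_j$ automatically. The saturation argument and the appeal to downward closure are routine by comparison, and the computation specializes correctly to the non-extended atom, where every type is realizable, all $A_j$ are pairwise incomparable, and $|B| = 2^{2^k}-2$.
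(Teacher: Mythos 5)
Your proof is correct and follows essentially the same route as the paper's: reduce to counting maximal satisfying teams via downward closure, then identify these with the $\subseteq$-maximal sets among the realizable parts $D_j$ of the $A_j$. In fact your write-up (the type map $\tau$, the notion of admissible type-sets, and the explicit bijection with maximal teams) supplies details that the paper's own proof only sketches.
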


\begin{proof}
    The formula $\theta$ is downward closed, so again $\Dim(\lVert\theta\rVert^{\bar{p}})=|\Max(\lVert\theta\rVert^{\bar{p}})|$. Define the set $B$ as in the theorem. The idea of the set $B$ is that we go through all the sets $A_j$ and remove from each $A_j$ exactly those tuples that do not consist of a possible truth evaluation for the formulas $\alpha_1,\dots,\alpha_k,\beta_1,\dots,\beta_k$. Note that although $A_j\nsubseteq A_{j'}$ for any $j\neq j'$, after removal of the tuples, we might have $D$ and $D'$ such that $D\subsetneq D'$, so it is necessary to remove the sets that are not maximal.
\end{proof}

We give an example of an extended inclusion atom and examine it through its upper dimension. 

\begin{proposition}
Let $x_i, \tilde{x_i}\in\{0,1\}$ and $x_i\neq \tilde{x_i}$, $1\leq i\leq k$, then
   $\Dim(\lVert p_{1}^{x_1}\dots p_{k}^{x_k}\subseteq p_{1}^{\tilde{x}_1}\dots p_{k}^{\tilde{x}_k}\rVert^{\bar{p}})=2^{2^{k-1}}$. 
\end{proposition}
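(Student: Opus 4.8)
The plan is to reduce the computation to a purely combinatorial question about a family of subsets of $\{0,1\}^k$, and then exploit the fact that this family contains no nontrivial convex subintervals. First I would unwind the semantics of the extended inclusion atom. Writing $g(s)=(s(p_1),\dots,s(p_k))\in\{0,1\}^k$ for the bit-string coded by an evaluation, and reading $p_i^{x_i}$ as the literal that is true under $s$ exactly when $s(p_i)=x_i$, one checks coordinatewise that for evaluations $s,s'$ the condition $s(p_1^{x_1}\dots p_k^{x_k})=s'(p_1^{\tilde x_1}\dots p_k^{\tilde x_k})$ is equivalent to $s'(p_i)=1-s(p_i)$ for all $i$. (This uses only $\tilde x_i=1-x_i$, so the particular values $x_i$ are irrelevant to the outcome.) Hence $T$ satisfies the atom iff $T$ is closed under bitwise complementation of evaluations, i.e. iff $g(T)\subseteq\{0,1\}^k$ is closed under the complement map $c(u)=\bar u$. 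Since $g$ is a bijection between $\Eval^{\bar p}$ and $\{0,1\}^k$ and the upper dimension is invariant under relabelling of the base set, it suffices to compute $\Dim(\mathcal I)$, where $\mathcal I:=\{U\subseteq\{0,1\}^k\mid U=\bar U\}$ is taken over the base set $\{0,1\}^k$.

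The crucial observation is that $\mathcal I$ has only trivial convex subfamilies. Indeed, suppose $A,B\in\mathcal I$ with $A\subsetneq B$. Then $B\setminus A$ is a nonempty union of complement-pairs $\{u,\bar u\}$; choosing one element $u$ of such a pair, the set $C:=A\cup\{u\}$ satisfies $A\subseteq C\subseteq B$ but $\bar u\notin C$, so $C\notin\mathcal I$ and the interval $[A,B]$ is not contained in $\mathcal I$. Consequently the only intervals contained in $\mathcal I$ are the singletons, so the convex shadow $\partial_B(\mathcal I)=\{B\}$ for every $B\in\mathcal I$, and in particular every element of $\mathcal I$ is critical.

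Finally I would apply \Cref{Zorn lemma} \cref{Zorn item def}: $\Dim(\mathcal I)$ is the least size of a subfamily whose members' convex shadows cover $\mathcal I$. Since each shadow is a singleton $\{B\}$, such a covering subfamily must be all of $\mathcal I$, giving $\Dim(\mathcal I)=|\mathcal I|$. To count $|\mathcal I|$, note that because $k\geq 1$ the complement map $c$ is a fixed-point-free involution on the $2^k$ points of $\{0,1\}^k$, partitioning them into $2^{k-1}$ pairs; a complement-closed set is exactly a union of some of these pairs, so $|\mathcal I|=2^{2^{k-1}}$, as claimed. The main (and essentially only) obstacle is spotting the interval-collapse phenomenon: although $\mathcal I$ is union closed, it is so ``spread out'' with respect to inclusion that its upper dimension coincides with its full cardinality. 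Once that is established, the count of $|\mathcal I|$ is immediate, and no further estimates from \Cref{aritydim} or the $\Finc$ calculation are needed.
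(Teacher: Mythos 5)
Your proposal is correct and follows essentially the same route as the paper's proof: identify the satisfying teams as exactly those closed under bitwise complementation, observe that the only convex subfamilies are singletons so the upper dimension equals the cardinality of the family, and count the complement-closed sets as unions of the $2^{k-1}$ complement-pairs. You merely spell out the interval-collapse argument that the paper states without justification.
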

\begin{proof}
For all assignments $s$ and propositional symbols $p_i$ such that $s(p_i)=s(x_i)$, define $\tilde{s}(p_i)=s(\tilde{x_i})$. Now
    $\lVert p_{1}^{x_1}\dots p_{k}^{x_k}\subseteq p_{1}^{\tilde{x}_1}\dots p_{k}^{\tilde{x}_k}\rVert^{\bar{p}}=\{T\subseteq \Eval^{\bar{p}}\mid T=\emptyset \text{ or if } s\in T \text{ then } \tilde{s}\in T\}$. All convex subfamilies consist of exactly one team, hence the upper dimension is the size of the family, $2^{2^{k-1}}$, where $2^{k-1}$ is the number of pairs $(s,\tilde{s})$ over $\bar{p}$.
\end{proof}


The general case of the upper dimension for extended inclusion atoms is more complicated because when some evaluations are not possible for the formulas appearing in the atom, it affects the satisfaction of the atom. Therefore, some critical sets for the usual propositional inclusion atom of the corresponding arity might not satisfy the extended version of the atom. However, Section \ref{subsec: trs quasi} contains some results for the dual upper dimension of some fragments of extended inclusion atoms.

\subsection{(Quasi) upward closed formulas} \label{subsec: trs quasi}


We examine some (quasi) upward closed atoms from the literature and calculate their dual upper dimension, while recalling from \Cref{trivial dim} that the upper dimension is $2$ for quasi upward closed formulas, and $1$ for upward closed formulas.

We have already covered some quasi upward closed formulas from the literature, such as the $0$-ary anonymity atom corresponding to a unary non-constancy atom. We further recall the semantic clause for $n$-ary non-constancy atoms. 
\begin{align*}
T \models\ \not\dep(p_1\dots p_n) \text{ iff } &  T = \emptyset \text{ or there are } s_1,s_2\in T \text{ such that } s_1(p_i)\neq s_2(p_i) \text{ for} \\
&\text{some $1\leq i\leq n$}.
\end{align*}

We have seen primitive inclusion atoms of the form $\bar{x}\subseteq\bar{p}$, and consider \emph{extended} primitive inclusion atoms of the form $\bar{x}\subseteq\bar{\alpha}$ where $\bar{x}$ is a sequence of constants from $\{\top,\bot\}$, and $\bar{\alpha}$ is a sequence of $\PL$-formulas.
Related notions include the \emph{singleton might} operator $\singlemight$ from \cite{Anttila}, and the \emph{might} operator $\might$ from \cite{HS}.
We recall their semantic clauses.
\begin{align*}
T \models \singlemight \phi &\text{ iff } T = \emptyset \text{ or there is } s \in T \text{ such that } \{s\} \models \phi, \\
T \models \might \phi &\text{ iff }  T = \emptyset \text{ or there is a nonempty } S \subseteq T \text{ such that } S \models \phi.
\end{align*}

Might and singleton might formulas are thus quasi upward closed. Over flat formulas, the three notions coincide: $\top\subseteq\alpha\equiv\singlemight\alpha\equiv\might\alpha$. 

To obtain meaningful dimension calculations, we assume that the formulas are contingent.

\begin{theorem}\label{uc thm} Let $\bar{p}=p_1\dots p_n$ and $n=|\bar{p}|$. We state the dual upper dimension of $\Dim^d(\mathcal{C})$ for quasi upward closed team properties $\mathcal{C}\subsetneq P(\Eval^{\bar{p}})$. 
   \begin{enumerate}[label=(\roman*)]
  \item $\Dim^d(\lVert{\not\dep(p_1\dots p_n)}\rVert^{\bar{p}})=\frac{2^n(2^n-1)}{2}+1$. In particular, $\Dim^d(\lVert{\not\dep(p_1)}\rVert^{{p_1}})=2$. \label{uc item nonconstancy}
    \item $\Dim^d(\lVert x_{1}\dots x_{k}\subseteq p_{1}\dots p_{k}\rVert^{\bar{p}})= 2$.\label{uc item primitive inc} 
    
      \item $\Dim^d(\lVert x_{1}\dots x_{k}\subseteq \alpha_{1}\dots \alpha_{k}\rVert^{\bar{p}})\leq 2^{n}$. \label{uc item 1} 
    
    \item $\Dim^d(\lVert \singlemight \phi\rVert^{\bar{p}}) \leq 2^{n}$. 
    \label{uc item 3}

     \item $\Dim^d(\lVert \might \phi\rVert^{\bar{p}})\leq \frac{2^{n}}{2^{{n}-1}!(2^{n}-2^{{n}-1})!}+1.$ \label{uc item 4} 
\end{enumerate}
\end{theorem}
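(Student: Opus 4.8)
The plan is to turn every item into a counting problem by invoking \Cref{quasi dc and uc prop}: for a quasi upward closed family $\mathcal{C}\subsetneq P(\Eval^{\bar p})$ that result gives $\Dim^d(\mathcal{C})=|\Min^q(\mathcal{C})|=|\Min(\mathcal{C}\setminus\{\emptyset\})|+1$. So the first step for each item is to check that the team property really is quasi upward closed and a proper subfamily of $P(\Eval^{\bar p})$; this is exactly where the standing contingency assumption enters, since it guarantees that the set of witnessing evaluations is neither empty nor everything, so that $\mathcal{C}\neq\{\emptyset\}$ and $\mathcal{C}\neq P(\Eval^{\bar p})$ (recall that $\Dim^d=1$ would indicate a fully upward closed family by \Cref{trivial dim}). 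Each atom and operator in the list has the form ``there is a witness'', so upward closedness modulo $\emptyset$ is immediate; after that the whole task reduces to describing the minimal nonempty satisfying teams and counting them, then adding $1$ for the empty team.

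For items \cref{uc item nonconstancy}--\cref{uc item 3} the minimal witnesses are easy to pin down. In \cref{uc item nonconstancy}, a nonempty $T$ satisfies $\not\dep(p_1\dots p_n)$ iff it contains two distinct evaluations, so the minimal witnesses are precisely the two-element teams; there are $\binom{2^n}{2}=\frac{2^n(2^n-1)}{2}$ of them, and adding $\emptyset$ gives the stated value (and $2$ when $n=1$). In \cref{uc item primitive inc}, the primitive inclusion atom is witnessed by a single evaluation, the one matching the constant tuple $\bar x$ on the atom's variables $p_1\dots p_k$; when these are all the base variables there is a unique minimal singleton, so $\Dim^d=2$. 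In \cref{uc item 1} and \cref{uc item 3} a nonempty team satisfies the atom iff it contains a singleton $\{s\}$ with $s\models\bigwedge_i\alpha_i^{x_i}$, respectively $\{s\}\models\phi$; the minimal witnesses are exactly those singletons, and since $\mathcal{C}\subsetneq P(\Eval^{\bar p})$ not every singleton qualifies, so there are at most $2^n-1$ of them and $|\Min^q|\le 2^n$. The inequalities here are genuinely ``$\le$'' because the exact count depends on the chosen formulas; the bound records the worst case.

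The main obstacle, and the only item requiring a real idea, is \cref{uc item 4}. Here I would first show that the minimal elements of $\lVert\might\phi\rVert^{\bar p}\setminus\{\emptyset\}$ are precisely the \emph{minimal} nonempty teams satisfying $\phi$: if $S$ is such a minimal $\phi$-team then $S$ itself witnesses $\might\phi$ while no proper nonempty subteam can, and conversely any minimal witness $T$ contains a nonempty $S\models\phi$, which forces $S=T$. Two distinct minimal $\phi$-teams cannot be nested, so $\Min(\mathcal{C}\setminus\{\emptyset\})$ is an antichain in the subset lattice of the $2^n$-element set $\Eval^{\bar p}$. Sperner's theorem then bounds any such antichain by $\binom{2^n}{\lfloor 2^n/2\rfloor}=\binom{2^n}{2^{n-1}}$ (using that $2^n$ is even for $n\ge 1$), and adding the empty team yields $\Dim^d(\lVert\might\phi\rVert^{\bar p})\le\binom{2^n}{2^{n-1}}+1$, which is the stated bound once one reads the displayed denominator $2^{n-1}!\,(2^n-2^{n-1})!$ as that of the binomial coefficient. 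The reason this case is harder than the others is that the minimal witnessing teams can have \emph{different} cardinalities, so a direct fixed-size count is unavailable and the antichain/Sperner argument is forced.
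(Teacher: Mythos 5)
Your proof is correct and follows essentially the same route as the paper: both reduce every item to counting quasi-minimal sets via \Cref{quasi dc and uc prop} and then identify the minimal nonempty witnesses (two-element teams for $\not\dep$, the unique matching singleton for the primitive inclusion atom, the witnessing singletons for items (iii)--(iv)). The only divergence is item (v), where the paper simply asserts the bound as ``the largest number of different teams of the same size'' while you supply the missing rigor --- characterizing the minimal witnesses of $\might\phi$ as the minimal nonempty $\phi$-teams, observing they form an antichain, and invoking Sperner's theorem to get $\binom{2^n}{2^{n-1}}$ --- which is a genuine improvement in precision (and correctly reads the displayed denominator as that of the binomial coefficient, whose numerator should be $2^n!$).
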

\begin{proof}
By quasi upward closure and \Cref{quasi dc and uc prop}, it suffices to calculate the number of quasi minimal sets for the family of teams satisfying the formula.
\begin{enumerate}[label=(\roman*)]
\item By definition,
    $\lVert{\neq(p_1\dots p_n)}\rVert^{\bar{p}}=\{T\subseteq \Eval^{\bar{p}}\mid T=\emptyset \text{ or } |T|\geq2\}.$ There are $\frac{2^n(2^n-1)}{2}$ such sets of cardinality $2$, hence $\Dim^d(\lVert \neq(p_1\dots p_n)\rVert^{\bar{p}})=|\Min^q(\lVert \neq(p_1\dots p_n)\rVert^{\bar{p}})| = \frac{2^n(2^n-1)}{2}+1$.

      \item Denote $x_{1}\dots x_{k}\subseteq p_{1}\dots p_{k}$ by $\bar{x}\subseteq \bar{p}$. There is a unique assignment $s$ for which $\{s\}\models \bar{x}\subseteq \bar{p}$, and all nonempty $T\in \lVert{\bar{x}\subseteq \bar{p}}\rVert^{\bar{p}}$ are such that $s\in T$. Hence $\Min^q(\lVert{\bar{x}\subseteq \bar{p}}\rVert^{\bar{p}})=\{\emptyset,\{s\}\}$ and the dual upper dimension is $\Dim^d(\lVert \bar{x}\subseteq \bar{p}\rVert^{\bar{p}})=|\Min^q(\lVert \bar{x}\subseteq \bar{p}\rVert^{\bar{p}})|=2$.
      
    
   \item   
Denote $x_{1}\dots x_{k}\subseteq \alpha_{1}\dots \alpha_{k}$ by $\bar{x}\subseteq \bar{\alpha}$.   We have 
    $$\lVert{\bar{x}\subseteq \bar{\alpha}}\rVert^{\bar{p}}=\{T\subseteq \Eval^{\bar{p}}\mid T=\emptyset \text{ or }\text{there is }s\in T\text{ with } \{s\}\models\bigwedge_{1\leq i\leq k}\alpha_i^{x_i}\},$$
 where $\alpha_i^\top=\alpha_i$ and $\alpha_i^\bot=\neg\alpha_i$. There are $2^{n}$ such singleton sets, and by the assumption that $\bar{x}\subseteq\bar{\alpha}$ is contingent, the dual upper dimension is $\Dim^d(\lVert \bar{x}\subseteq \bar{\alpha}\rVert^{\bar{p}})=|\Min^q(\lVert \bar{x}\subseteq \bar{\alpha}\rVert^{\bar{p}})|\leq 2^{n}$.


    %
  \item This case is analogous to \cref{uc item 1} in general, and to \cref{uc item primitive inc} if $\phi$ is a conjunction of literals, due to the translation $\singlemight (p_1^{x_1}\land\dots\land p_n^{x_n})\equiv x_1\dots x_n\subseteq p_1\dots p_n$. 
  
   \item We calculate $\Dim^d(\lVert \might\phi\rVert^{\bar{p}})=|\Min^q(\lVert \might\phi\rVert^{\bar{p}})| \leq \frac{2^{n}}{2^{{n}-1}!(2^{n}-2^{{n}-1})!}+1$, where the upper bound is the largest number of different teams of the same size, together with the empty team.
     \end{enumerate}
\end{proof}

We end the section by calculating the dual upper dimension of some upward closed formulas. First, recall the semantics of the nonemptiness atom $\NE$ (see, e.g., \cite{YANG20171406}): $T\models \NE$ if and only if $T\neq\emptyset$. Clearly, the nonemptiness atom is upward closed. Furthermore the dual upper dimension $\Dim^d(\lVert\NE\rVert^{\bar{p}})$ is calculated in \cite{Hella_Luosto_Vaananen_2024} to be the number of singleton teams, i.e., $2^{|\bar{p}|}$.

Expressions such as `might $p$' can linguistically be interpreted as true only when there is a witness to $p$ being true. This conflicts with the empty team property of $\might p$. To avoid this vacuous satisfaction of quasi upward closed formulas, we \emph{pragmatically enrich} them by taking a conjunction with the non-emptiness atom $\NE$, as in \cite{AloniSP2022}. The pragmatically enriched formula is then upward closed.  In particular, we obtain the formula $\might\phi\land \NE$, equivalent to $\blackdiamond\phi$, where $\blackdiamond$ is the \emph{epistemic might} operator used in \cite{anttila2025convexteamlogics,Hornung}. 

\begin{corollary} Let  $\bar{x}=x_1\dots x_n$, $\bar{\alpha}=\alpha_1\dots \alpha_n$, $\bar{p}=p_1\dots p_n$ and $n=|\bar{p}|$. We state the dual upper dimension of $\Dim^d(\mathcal{C})$ for upward closed team properties $\mathcal{C}$ without the empty team.

\begin{align*}
&\Dim^d(\lVert{\not\dep(\bar{p})\land \NE}\rVert^{\bar{p}})=\frac{2^n(2^n-1)}{2}, \quad \Dim^d(\lVert{\not\dep(p_1)\land \NE}\rVert^{{p_1}})=1,\\
   &\Dim^d(\lVert \bar{x}\subseteq \bar{p}\land \NE\rVert^{\bar{p}})= 1,\quad \Dim^d(\lVert \bar{x}\subseteq \bar{\alpha}\land \NE\rVert^{\bar{p}})\leq 2^{n},\\
   &    \Dim^d(\lVert \singlemight \phi\land \NE\rVert^{\bar{p}}) \leq 2^{n},  \quad\text{and}\quad \Dim^d(\lVert \might \phi\land \NE\rVert^{\bar{p}})\leq \frac{2^{n}}{2^{{n}-1}!(2^{n}-2^{{n}-1})!}.
\end{align*}
\end{corollary}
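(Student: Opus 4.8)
The plan is to obtain \Cref{uc thm b} as an immediate consequence of \Cref{uc thm}, exploiting the single uniform effect of pragmatic enrichment: conjoining with $\NE$ deletes exactly the empty team and nothing else. Each formula $\phi$ appearing here ($\not\dep(\bar{p})$, $\bar{x}\subseteq\bar{p}$, $\bar{x}\subseteq\bar{\alpha}$, $\singlemight\phi$, $\might\phi$) is quasi upward closed and has the empty team property, so by the semantics of $\NE$ I would first record the set-level identity
\[
\lVert\phi\wedge\NE\rVert^{\bar{p}}=\lVert\phi\rVert^{\bar{p}}\setminus\{\emptyset\}.
\]
By the definition of quasi upward closure, the right-hand family is upward closed, and it is nonempty under the contingency assumption inherited from \Cref{uc thm}. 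Hence \Cref{dc and uc prop} applies and gives $\Dim^d(\lVert\phi\wedge\NE\rVert^{\bar{p}})=|\Min(\lVert\phi\rVert^{\bar{p}}\setminus\{\emptyset\})|$.

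The crux is then to relate this minimal count to the quasi-minimal count already computed in \Cref{uc thm}. Recalling the definition $\Min^q(\mathcal{A})=\Min(\mathcal{A}\setminus\{\emptyset\})\cup\{\emptyset\}$ and that \Cref{quasi dc and uc prop} gives $\Dim^d(\lVert\phi\rVert^{\bar{p}})=|\Min^q(\lVert\phi\rVert^{\bar{p}})|$, I would observe that $\emptyset$ is a genuinely extra element of $\Min^q$ not present in $\Min(\lVert\phi\rVert^{\bar{p}}\setminus\{\emptyset\})$, so the two cardinalities differ by exactly one. Combining with the previous paragraph yields the clean relation
\[
\Dim^d(\lVert\phi\wedge\NE\rVert^{\bar{p}})=\Dim^d(\lVert\phi\rVert^{\bar{p}})-1 ,
\]
valid precisely because the dimension on the right is a quasi-minimal count. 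From here the corollary is pure bookkeeping: subtract $1$ from each entry of \Cref{uc thm}, sending $\tfrac{2^n(2^n-1)}{2}+1\mapsto\tfrac{2^n(2^n-1)}{2}$ and $2\mapsto 1$ in the $\not\dep$ and primitive-inclusion cases, turning the $\might\phi$ bound into $\tfrac{2^{n}}{2^{n-1}!(2^{n}-2^{n-1})!}$, and reducing the bound $2^n$ to $2^n-1$ (hence a fortiori $\le 2^n$) in the inequality cases $\bar{x}\subseteq\bar{\alpha}$ and $\singlemight\phi$.

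I do not expect a real obstacle, since this is a corollary of the already-proved \Cref{uc thm}; the only points deserving explicit care are justifying that $\lVert\phi\rVert^{\bar{p}}\setminus\{\emptyset\}$ is honestly \emph{upward} closed (so that \Cref{dc and uc prop}, rather than merely \Cref{quasi dc and uc prop}, may be invoked) and that it is nonempty, both of which follow at once from quasi upward closure together with contingency. A secondary, routine check is that $\Min$ of the enriched family equals $\Min(\lVert\phi\rVert^{\bar{p}}\setminus\{\emptyset\})$, which holds trivially because the two families are literally the same set.
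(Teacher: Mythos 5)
Your proposal is correct and follows essentially the same route as the paper, which simply recomputes the counts of Theorem~\ref{uc thm} with the empty team removed from each family. Your explicit identity $\lVert\phi\wedge\NE\rVert^{\bar{p}}=\lVert\phi\rVert^{\bar{p}}\setminus\{\emptyset\}$ together with the resulting relation $\Dim^d(\lVert\phi\wedge\NE\rVert^{\bar{p}})=\Dim^d(\lVert\phi\rVert^{\bar{p}})-1$ (via $|\Min^q|=|\Min(\mathcal{A}\setminus\{\emptyset\})|+1$ and Proposition~\ref{dc and uc prop}) is a slightly more precise rendering of the paper's one-line argument, and all the resulting numerical entries check out.
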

\begin{proof}
    The dual upper dimensions are calculated as in \Cref{uc thm}, with the difference that the empty team is not in any of the families defined by the formulas, and that the non-contingency assumption in \Cref{uc thm} is now trivially satisfied. 
\end{proof}

\section{Conclusion and future work}
\label{section 7}


We set out wanting to prove inexpressibility results in propositional team logic by means of  dimension calculations. The situation with propositional logic turns out to be different than with first-order logic. Team theoretical atoms \emph{can} actually be defined in terms of atoms of smaller arity but complexity of the formula increases in the sense that the defining formula gets longer with more and more occurrences of the atoms of lower arity. We used the dimension concept to quantify this increase of complexity leading us to a sequence of succinctness results. At the same time we ended up bringing into the picture some features that go beyond the strict confines of propositional logic, such as extended propositional logic and quantified propositional logic.


Next, we provide some directions for future work.

The upper and dual upper dimensions are both concerned with convex subfamilies, leading to natural ways of calculating the dimension of (quasi) convex families. In particular, we observe a duality between the upper dimension for (quasi) downward closed properties and the dual upper dimension for (quasi) upward closed properties. In the literature, we find expressively complete logics for all nonempty downward closed team properties \cite{MR3488885}. To enrich this picture, we could introduce propositional team-based logics that are expressively complete for (quasi) upward closed, and (quasi) downward closed team properties. It would be of particular interest from a conceptual standpoint if this can be done in a way that, in some sense, mirrors the duality observed by considering the corresponding team properties through the lens of the dimensions.

Outside convexity, other natural closure properties include union and intersection closure. Potentially, optimality results for, e.g., union closed atoms, could more easily be studied under an alternative definition of dimension that does not rely on convex subfamilies. This motivates us to define a natural dimension for union closed families $\mathcal{A}$: $\Dim^u(\mathcal{A})$ is the smallest cardinality of a subfamily $\mathcal{G}\subseteq\mathcal{A}$ such that $\bar{\cup}\mathcal{G}=\mathcal{A}$, where $\bar{\cup}\mathcal{G}=\{A\mid A=\bigcup\mathcal{G'} \text{ for some } \mathcal{G}'\subseteq\mathcal{G}\}$; and for intersection closed families $\mathcal{B}$: $\Dim^i(\mathcal{B})$ is the smallest cardinality of a subfamily $\mathcal{H}\subseteq\mathcal{B}$ such that $\underline{\cap}\mathcal{H}=\mathcal{B}$, where $\underline{\cap}\mathcal{H}=\{B\mid B=\bigcap\mathcal{H'} \text{ for some } \mathcal{H}'\subseteq\mathcal{H}\}$.


Using the upper dimension, we obtained sharp inexpressibility results for dependence and anonymity atoms concerning the arity of the atoms and the number of their occurrences. Similar optimality proofs were also established for certain arities of inclusion and exclusion atoms, where the translations made use of propositional quantifiers. We conjecture that the quantifiers are necessary, and that there are no reduction formulas for propositional inclusion/exclusion logic, i.e., $\PL(\subseteq_k)_n\not\leq \PL(\subseteq_{k-1})_m$ and $\PL(\mid_k)_n\not\leq \PL(\subseteq_{k-1})_m$ for any natural number $m$. 



Furthermore, the logics $\PL(\subseteq)$ and $\PL(\mid)$ are strictly less expressive than their quantified counterparts $\QPL(\subseteq)$ and $\QPL(\mid)$ and the exact expressive powers of $\PL(\subseteq)$ and $\PL(\mid)$ are unknown. One interesting feature, that might have a connection to expressive limitations of the logics, is that the semantics of the inclusion and exclusion atoms have a \emph{global} nature in contrast with dependence and anonymity atoms. Namely, we have that $\dep(\bar{p};q)\equiv \bigvee_{\bar{{x}} \in 2^{\bar{p}}} (\bar{p}^{\bar{{x}}}\land \dep(\bar{p};q))$ and $\bar{p}\Upsilon q\equiv \bigvee_{\bar{{x}}\in 2^{\bar{p}}}(\bar{p}^{\bar{{x}}}\land \bar{p}\Upsilon q)$, while $\bigvee_{\bar{{x}} \in 2^{\bar{p}}} (\bar{p}^{\bar{{x}}}\land \bar{p}\mid\bar{q})\not\models \bar{p}\mid\bar{q}$ and $\bar{p}\subseteq\bar{q}\not\models \bigvee_{\bar{{x}}\in 2^{\bar{p}}}(\bar{p}^{\bar{{x}}}\land \bar{p}\subseteq\bar{q})$.

Lastly, the upper dimension of the independence atom and the upper bounds given by the number of occurrences do not allow us to obtain the corresponding results for independence atoms in relation to their reduction formula; thus, a more careful consideration of inexpressibility results for independence atoms is left as future work.

\bigbreak




\noindent\textbf{Acknowledgements.} We thank Pietro Galliani, Lauri Hella, \AA sa Hirvonen, Kerkko Luosto,  Marius Tritschler, and  Fan Yang for helpful conversations on the topics of this paper. This project has  received  funding from the European Research Council (ERC) under the
European Union’s Horizon 2020 research and innovation programme (grant agreement No
101020762) and from the Research Council of Finland, grant number 368671. The first author was partially supported by the Research Council of Finland under grant number 359650. The second author is supported by the Magnus Ehrnrooth Foundation.

\bibliographystyle{plain}
\bibliography{mybib}

\section{Appendix} \label{app1}

For the convenience of the reader, we include here some proofs omitted from the main part of the paper. These proofs are essentially  from \cite{Hella_Luosto_Vaananen_2024}, where they were considered in the setting of first-order logic.\\

\noindent\textbf{\Cref{dim and locality prop}.}[\cite{Hella_Luosto_Vaananen_2024}]
Let $\phi\in\QPL(\circledast)$ be such that $\FV(\phi)\subseteq\Var(\bar{p})\subseteq\Var(\bar{p}')$.
\begin{enumerate}[label=(\roman*)]
    \item   Then $\Dim(\lVert\phi\rVert^{\bar{p}})=\Dim(\lVert\phi\rVert^{\bar{p}'})$.

    \item  
    Then $\Dim^d(\lVert\phi\rVert^{\bar{p}})\leq2^{n}\cdot\Dim^d(\lVert\phi\rVert^{\bar{p}'})$, where 
    \[
    n=\begin{cases}
        0, \text{ if } \lVert\phi\rVert^{\bar{p}} \text{ is convex,}\\
        |\Var(\bar{p}')\setminus\Var(\bar{p})|\cdot\max\{|T|\mid T\in\lVert\phi\rVert^{\bar{p}}\}, \text{ otherwise}.
    \end{cases}
    \]
\end{enumerate}

\begin{proof}
  \begin{enumerate}[label=(\roman*)]
    \item   It suffices to show the claim for any $\bar{p}$ and $\bar{p}'$ such that $\FV(\phi)\subseteq\Var(\bar{p})\subseteq\Var(\bar{p}')$ and $\Var(\bar{p}')\setminus\Var(\bar{p})=\{p_i\}$. First, note that by locality of $\QPL(\circledast)$, $\lVert\phi\rVert^{\bar{p}}=\{T\restriction\Var(\bar{p})\mid T\in\lVert\phi\rVert^{\bar{p}'}\}$ and $\lVert\phi\rVert^{\bar{p}'}=\{T[F/p_i]]\mid F\colon T\to\{\{0\},\{1\},\{0,1\}\}, T\in\lVert\phi\rVert^{\bar{p}}\}$. 
    
    Suppose that $\mathcal{G}\subseteq\lVert\phi\rVert^{\bar{p}}$ is a minimal subfamily dominating $\lVert\phi\rVert^{\bar{p}}$. We show that then $\mathcal{G}':=\{G[\{0,1\}/p_i]\mid G\in\mathcal{G}\}\subseteq\lVert\phi\rVert^{\bar{p}'}$ is a minimal subfamily dominating $\lVert\phi\rVert^{\bar{p}'}$. Let $\mathcal{D}_{G'}=\partial_{G'}(\lVert\phi\rVert^{\bar{p}'})$ for all $G'\in\mathcal{G}'$. We show that $\lVert\phi\rVert^{\bar{p}'}\subseteq\bigcup_{G'\in\mathcal{G}'}\mathcal{D}_{G'}$. If $T\in\lVert\phi\rVert^{\bar{p}'}$, then $T=T'[F/p_i]$ for some  $F\colon T'\to\{\{0\},\{1\},\{0,1\}\}$ and  $T'\in\lVert\phi\rVert^{\bar{p}}$. Since $\mathcal{G}$ dominates $\lVert\phi\rVert^{\bar{p}}$, there is $G\in\mathcal{G}$ such that $T'\subseteq G$ and $[T',G]\subseteq\lVert\phi\rVert^{\bar{p}}$. By locality, we have $[T,G[\{0,1\}/p_i]]\subseteq\lVert\phi\rVert^{\bar{p}'}$, so $T\in\mathcal{D}_{G'}$ for $G'=G[\{0,1\}/p_i]\in\mathcal{G}'$. It is straightforward to check the rest of the conditions for dominating subfamily, so we conclude that $\mathcal{G}'$ dominates $\lVert\phi\rVert^{\bar{p}'}$.
    
    It remains to show that $\mathcal{G}'$ is minimal among the dominating subfamilies. Suppose for a contradiction that there is $\mathcal{G}_{min}'\subsetneq\mathcal{G}'$ that dominates $\lVert\phi\rVert^{\bar{p}'}$. Let $\mathcal{G}_{min}:=\{G\restriction\Var(\bar{p})\mid G\in\mathcal{G}_{min}'\}$. Then $\mathcal{G}_{min}\subsetneq\mathcal{G}$, and it is straightforward to show that $\mathcal{G}_{min}$ dominates $\lVert\phi\rVert^{\bar{p}}$, contradicting the assumption that $\mathcal{G}$ is a minimal dominating subfamily. By the definition of $\mathcal{G}'$, we clearly have $|\mathcal{G}'|=|\mathcal{G}|$.

    Note that we could analogously show that if $\mathcal{G}\subseteq\lVert\phi\rVert^{\bar{p}'}$ is a minimal subfamily dominating $\lVert\phi\rVert^{\bar{p}'}$, then $\mathcal{G}':=\{G\restriction\Var(\bar{p})\mid G\in\mathcal{G}\}$ is a minimal subfamily dominating $\lVert\phi\rVert^{\bar{p}}$. We would also need to show that $|\mathcal{G}'|=|\mathcal{G}|$, but this is easy: if $T\restriction\Var(\bar{p})=T'\restriction\Var(\bar{p})$ for some $T,T'\in\mathcal{G}$, then $T,T'\subseteq(T\restriction\Var(\bar{p}))[\{0,1\}/p_i]$, and by locality and $\mathcal{G}\subseteq\Crit(\lVert\phi\rVert^{\bar{p}'})$, we have $T=(T\restriction\Var(\bar{p}))[\{0,1\}/p_i]=T'$.

    \item If $\lVert\phi\rVert^{\bar{p}}$ is convex, then also $\lVert\phi\rVert^{\bar{p}'}$ is convex, and by the empty team property of $\QPL(\circledast)$ and Lemma \ref{minmax and crit dim lemma} item (iii), $\Dim^d(\lVert\phi\rVert^{\bar{p}})=|\Min(\lVert\phi\rVert^{\bar{p}})|=|\{\emptyset\}|=|\Min(\lVert\phi\rVert^{\bar{p}'})|=\Dim^d(\lVert\phi\rVert^{\bar{p}'})$.
    Suppose then that $\lVert\phi\rVert^{\bar{p}}$ is not convex. Let $\bar{p}$, $\bar{p}'$, and $\bar{p}''$ be such that $\FV(\phi)\subseteq\Var(\bar{p})\subseteq\Var(\bar{p}')$, $\Var(\bar{p}')\setminus\Var(\bar{p})=\Var(p)''$, and $|\bar{p}''|=k$.
    Let $\mathcal{G}\subseteq\lVert\phi\rVert^{\bar{p}}$ is a minimal subfamily supporting $\lVert\phi\rVert^{\bar{p}}$. Then a proof analogous to the proof of item (i) shows that
    \[
    \mathcal{G}':=\{G[F/\bar{p}'']\mid F\colon G\to\{\{a\}\mid a\in\{0,1\}^k\},G\in\mathcal{G}\}\subseteq\lVert\phi\rVert^{\bar{p}'}
    \]
    is a subfamily supporting $\lVert\phi\rVert^{\bar{p}'}$. Since $|\mathcal{G}'|\leq 2^{k\cdot\max\{|T| \ \mid \ T\in\lVert\phi\rVert^{\bar{p}}\}}$, the claim follows.
    \end{enumerate}
\end{proof}

\noindent\textbf{\Cref{dim_families}.}[\cite{Hella_Luosto_Vaananen_2024}]
Let $X$ and $Y$ be as in Definition \ref{sets}, and assume that $|X|=\ell\geq 2$ and $|Y|=n\geq 2$. Then the following claims hold for the upper dimensions of the families. 
   \begin{itemize} 
       \item[(i)] $\Dim(\Fdep)=n^\ell$, 
       \item[(ii)] $\Dim(\Fano)=2^\ell$,
       \item[(iii)] $\Dim(\Finc)=2^\ell-\ell$,
       \item[(iv)] $\Dim(\Fexc)=2^\ell-2$,
       \item[(v)] $\Dim(\Find)=(2^\ell-\ell-1)(2^n-n-1)+\ell+n$.
\end{itemize}

\begin{proof}
    \begin{itemize}
        \item[(i)] First note that  $\Fdep$ is downward closed, 
        so by \Cref{dc and uc prop}, $\Dim(\Fdep)=|\Max(\Fdep)|$. The maximal sets in $\Fdep$ are total functions $f\colon X\to Y$, so $\Dim(\Fdep)=|Y|^{|X|}=n^\ell$. 
        \item[(ii)] 

We show that $\Crit(\Fano)=\{A\times Y\mid A\subseteq X\}$ the smallest subfamily dominating $\Fano$, so $\Dim(\Fano)=|\{A\times Y\mid A\subseteq X\}|=2^\ell$.
        
        Let $R\subseteq R'\subseteq X\times Y$. Then by the definition of $\Fano$, $[R,R']\subseteq\Fano$ if and only if $R,R'\in\Fano$ and $\Dom(R)=\Dom(R')$. 
         Since $\partial_{R'}(\Fano)=\{S\subseteq R'\mid [S,R']\subseteq\Fano\}$, this means that for any $R,R'\in\Fano$, $R\in \partial_{R'}(\Fano)$ if and only if $R\subseteq R'$ and $\Dom(R)=\Dom(R')$. Since $R\in \partial_{R'}(\Fano)$ if and only if $\partial_R(\Fano)\subseteq\partial_{R'}(\Fano)$, the critical sets of $\Fano$ are of the form $A\times Y$ for some $A\subseteq X$, so $\Crit(\Fano)=\{A\times Y\mid A\subseteq X\}$.

          We now show that $\Crit(\Fano)$ is the smallest dominating subfamily of $\Fano$ Let $\mathcal{G}\subseteq\Fano$ be such that it dominates $\Fano$. 
Since $\Crit(\Fano)\subseteq\Fano=\bigcup_{G\in\mathcal{G}}\mathcal{D}_{G}$ for some convex families $\mathcal{D}_{G},G\in\mathcal{G}$, we have that for every $A\in\Crit(\Fano)$ there is some $G\in\mathcal{G}$ such that $A\in\mathcal{D}_{G}$. Since $\mathcal{D}_G$ is convex, $[A,G]\subseteq\Fano$, and hence $A\in\partial_G(\Fano)$.
Then 
$\partial_A(\Fano)\subseteq \partial_G(\Fano)$ and since $A$ is critical, 
we have $A=G$. Thus $A\in\mathcal{G}$, and $\Crit(\mathcal{A})\subseteq\mathcal{G}$.
         
        \item[(iii)] Let $\operatorname{id}_X=\{(a,a)\mid a\in X\}$. We observe that \( \Crit (\Finc) = \{ R_A \mid A \subseteq X \} \), where for \( A \subseteq X \),  \( R_A = (A \times X) \cup \operatorname{id}_X \), and refer to \cite{Hella_Luosto_Vaananen_2024} for details of this claim.
        Clearly \( R_A \in \Finc \) and
    \[
    \partial_{R_A}(\Finc) = \left\{ T \in \Finc \mid \Dom (T\setminus\operatorname{id}_X)\subseteq A\subseteq \Ran (T) \right\}.
    \]

We show that  $\Crit (\Finc)\setminus\{ R_{\{a\}} \mid a \in X \}$ is a family of smallest size that dominates $\Finc$ and hence $\Dim(\Finc)=2^l-l$. We first note that for any $A \subseteq X$, $|A|\geq 2$, there is a set in $\partial_{R_A}(\Finc)$ that is not in the shadow $\partial_{R_B}(\Finc)$ for any $B\neq A$. Let $f=A\times A\setminus id_X$. Then $\Dom (f\setminus id_X )= \Dom (f)=A = \Ran (f)$, so $f\in \partial_{R_A}(\Finc)\setminus \partial_{R_B}(\Finc)$ by definition. Similarly for $A=\emptyset\neq B$, $\emptyset\in \partial_{R_{\emptyset}}\setminus \partial_{R_B}$. Hence $\Crit (\Finc) \setminus \{R_{A}\}$ does not dominate $\Finc$ for any  $A \subseteq X$ which is not a singleton. 

For singletons $A=\{a\}$, we show that each member of $\partial_{R_{\{a\}}}(\Finc)$ is in some shadow  $\partial_{R_B}(\Finc)$ with $B\neq {\{a\}}$. 
Let $T\in\partial_{R_{\{a\}}}(\Finc)$. Either $T\subseteq id_X$ and thus $T\in\partial_{\emptyset}(\Finc)$, or there is a nonempty set $B=\{b\in X\mid (a,b)\in T\}$ for which $\Dom (T\setminus\operatorname{id}_X)=\{a\}\subseteq B\subseteq \Ran (T)$, hence $T\in\partial_{R_B}$. 

        \item[(iv)] Again, note that $\Fexc$ is downward closed,
        so by \Cref{dc and uc prop}, $\Dim(\Fexc)=|\Max(\Fexc)|$. The maximal sets in $\Fexc$ are of the form $A\times(X\setminus A)$, where $\emptyset\neq A\subsetneq X$. This means that the upper dimension of $\Fexc$ is the number of nonempty proper subsets of $X$, i.e., $\Dim(\Fexc)=2^\ell-2$.
        \item[(v)] 
        In order to find the critical sets, we will first find the convex shadow  $\partial_{A\times B}(\Find)$. 
        Suppose first that $|A|,|B|\geq 2$. Then $\partial_{A\times B}(\Find)=\{A\times B\}$, because removing or adding any element $(x,y)\in X\times Y$ from or to $A\times B$ would make the set no longer a Cartesian product. Suppose then that $|A|\leq 1$ or $|B|\leq 1$. We show that then $\partial_{A\times B}(\Find)=\mathcal{P}(A\times B)$. If either $A$ or $B$ is empty, the claim is trivial. So by symmetry, we may assume that $A=\{a\}$. Now every subset of $A\times B$ is of the form $A\times B'$, where $B'\subseteq B$, and therefore $\partial_{A\times B}(\Find)=\mathcal{P}(A\times B)$.

        Then by the definition of critical sets, we have $\Crit(\Find)=\{A\times B\mid |A|\geq 2,|B|\geq 2\}\cup\{\{a\}\times Y\mid a\in X\}\cup\{Y\times \{b\}\mid b\in Y\}$. 
        For any $\mathcal{G}\subsetneq\Crit(\Find)$, $\bigcup_{G\in\mathcal{G}}\partial_{G}(\Find)\neq\Find$, so by \Cref{Zorn lemma} \cref{Zorn item}, $\Dim(\Find)=|\Crit(\Find)|=(2^\ell-\ell-1)(2^n-n-1)+\ell+n$.
    \end{itemize}
\end{proof}

\end{document}